\theoremstyle{definition}
\newtheorem{thm}[subsection]{Theorem}
\newtheorem{proposition}[subsection]{Proposition}
\newtheorem{corollary}[subsection]{Corollary}
\newtheorem{lemma}[subsection]{Lemma}
\newtheorem{definition}[subsection]{Definition}
\newtheorem{example}[subsection]{Example}
\newtheorem{remark}[subsection]{Remark}
\newtheorem*{thm:multimatroid_integral}{Theorem~\ref{thm:multimatroid_integral}}
\newenvironment{customthm}[1]
  {\innercustomthm}
  {\endinnercustomthm}
\def\Z{\mathbb{Z}}
\def\R{\mathbb{R}}
\def\P{\mathbb{P}}
\def\A{\mathbb{A}}
\def\N{\mathbb{N}}
\def\Z{\mathbb{Z}}
\def\calL{\mathcal{L}}
\def\calP{\mathcal{P}}
\newcommand{\T}{\mathcal{T}}
\def\bfM{\mathbf{M}}
\newcommand{\bfw}{\mathbf{w}}
\newcommand{\e}{\mathbf{e}}
\newcommand{\rkk}{\mathbf{rk}}
\newcommand*\cube{\scalebox{0.7}{\mbox{\mancube}}}
\newcommand{\scC}{\mathscr{C}}
\newcommand{\scM}{\mathscr{M}}
\newcommand{\scMc}{\mathscr{M}^{\cube}}
\newcommand{\coeff}{c}
\newcommand{\aaS}{\vec{a}}
\newcommand{\NC}{\mathrm{C}}
\newcommand{\Mbar}{\overline{\mathcal{M}}}
\newcommand{\decoset}{\mathcal{R}_\pi}
\newcommand{\decoseto}{\mathcal{R}_\pi^\times}
\newcommand{\decosetmax}{\mathcal{R}_\pi^{\max}}
\newcommand{\mchain}{\mathsf{MaxChain}}
\newcommand{\maxchain}{\mathsf{MaxChain}(\decoset)}
\newcommand{\ove}{\overline{\mathbf{e}}}
\newcommand{\cone}{\operatorname{cone}}
\newcommand{\conv}{\operatorname{conv}}
\newcommand{\Hom}{\operatorname{Hom}}
\newcommand{\IP}{\textrm{IP}}
\newcommand{\IPC}{\textrm{IPC}}
\newcommand{\Lrn}{\overline{\calL}^{r}_n}
\newcommand{\Span}{\operatorname{span}}
\newcommand{\M}{\mathbf{M}}
\renewcommand{\emptyset}{\varnothing}
\renewcommand{\L}{\overline{\mathcal{L}}}
\newcommand{\rk}{\textrm{rk}}
\newcommand{\vol}{\operatorname{Vol}}
\newcommand{\Vol}{\operatorname{Vol}}
\newcommand{\blu}[1]{\textcolor{black}{#1}}
\newcommand{\blub}[1]{\textcolor{black}{\overline{#1}}}
\newcommand{\red}[1]{\textcolor{black}{#1}}
\newcommand{\redb}[1]{\textcolor{black}{\overline{#1}}}
\newcommand{\annoyingfanname}{affine permutohedral fan}
\definecolor{ao(english)}{rgb}{0.5, 0.0, 0.5}
\definecolor{ao(green)}{rgb}{0, 0.5, 0}
\title{Multimatroids and rational curves with cyclic action}
\author[E. Clader]{Emily Clader}\address{Emily Clader, Department of Mathematics, San Francisco State University}
\email{\url{eclader@sfsu.edu}}
\author[C. Damiolini]{Chiara Damiolini}
\address{Chiara Damiolini, Department of Mathematics, University of Texas at Austin}
\email{\url{chiara.damiolini@austin.utexas.edu}}
\author[C. Eur]{Christopher Eur}
\address{Christopher Eur, Department of Mathematics, Harvard University}
\email{\url{ceur@math.harvard.edu}}
\author[D. Huang]{Daoji Huang}\address{Daoji Huang, Department of Mathematics, University of Minnesota}
\email{\url{huan0664@umn.edu}}
\author[S. Li]{Shiyue Li}\address{Shiyue Li, School of Mathematics, Institute for Advanced Study}
\email{\url{shiyue_li@ias.edu}}
\begin{document}

\begin{abstract}
We study the connection between multimatroids and moduli spaces of rational curves with cyclic action. Multimatroids are generalizations of matroids and delta-matroids that naturally arise in topological graph theory.  The perspective of moduli of curves provides a tropical framework for studying multimatroids, generalizing the previous connection between type-$A$ permutohedral varieties (Losev--Manin moduli spaces) and matroids, and the connection between type-$B$ permutohedral varieties and delta-matroids.
Specifically, we equate a combinatorial nef cone of the moduli space with the space of $\R$-multimatroids, a generalization of multimatroids, and we introduce the independence polytopal complex of a multimatroid, whose volume is identified with an intersection number on the moduli space.  As an application, we give a combinatorial formula for a natural class of intersection numbers on the moduli space by relating to the volumes of independence polytopal complexes of multimatroids.
\end{abstract}

\maketitle

\section{Introduction}

There have been rapid recent developments in the interplay amongst three objects: Coxeter groups, matroids, and the Chow rings of certain moduli spaces of rational curves.  In type $A$, the key insight is that the base polytope of a matroid on a set with $n$ elements is a type-$A$ generalized permutohedron \cite{GGMS87}, meaning that its normal fan coarsens the type-$A$ permutohedral fan $\Sigma_{A_{n-1}}$.  This allows one to associate to any matroid an element of the Chow ring of the toric variety $X_{A_{n-1}}$, which was realized by the work of Losev and Manin \cite{LM} as a moduli space of rational curves with weighted marked points.  The connections between these perspectives have yielded breakthroughs in both matroid theory and geometry \cite{AHK, BST, BEST, DR, EHL, Hampe, LdMRS}.

For type-$B$ Coxeter groups, a similar unifying framework was developed in \cite{EFLS, eur2023k}, establishing a connection between the algebraic geometry of the type-$B$ permutohedral fan $\Sigma_{B_n}$ and the combinatorics of delta-matroids, an analogue of matroids first introduced by Bouchet \cite{bouchet1987}.  Batyrev and Blume showed that the toric variety $X_{B_n}$ also admits a modular interpretation as a moduli space of rational curves equipped with an involution \cite{BB1, BB2}. 

At present, it seems that this story does not extend to other Coxeter types.  In particular, obstacles were encountered while studying the tautological classes of other Coxeter matroids \cite[Remark 3.6]{EFLS} and in finding a modular interpretation for toric varieties corresponding to other Coxeter types \cite{BB1}.

On the other hand, there is another family of complex reflection groups that generalize type-$A$ and type-$B$ Coxeter groups, called generalized symmetric groups $S(r, n)$, which depend on parameters $r \geq 2$ and $n \geq 1$.  In our previous work \cite{clader2022permutohedral, clader2022wonderful}, we constructed moduli spaces of curves that correspond to these groups in a precise sense.  The result is a smooth projective moduli space $\Lrn$ parameterizing rational stable curves with an order-$r$ automorphism and $n$ orbits of weighted points, which coincides when $r=2$ with Batyrev--Blume's space.  A similar generalization applies in matroid theory: delta-matroids are the $r=2$ case of objects known as $r$-matroids, which are a special case of multimatroids \cite{bouchet1987}.  The connection between the theory of $r$-matroids and the geometry of $\Lrn$ has not yet been studied, and developing this connection is our primary motivation for the current work.

To describe the more general setting in which we work, fix a positive integer $n$, a finite set $E$, and a surjection $\pi \colon  E \rightarrow [n]$, where $[n] \coloneqq \{1, \ldots, n\}$.  The data of $\pi$ is equivalent to a partition
\[E = E_1 \sqcup \cdots \sqcup E_n\]
by setting $E_i \coloneqq  \pi^{-1}(i)$ for each $i$.  A subset $S \subseteq E$ is {\bf $\pi$-colored} if it contains at most one element of each $E_i$.  We denote by $\decoset$ the poset of $\pi$-colored subsets of $E$, ordered by inclusion.  Note that the maximal elements of this poset are the size-$n$ subsets of $E$ consisting of precisely one element of each $E_i$.

In the same way that a matroid on ground set $E$ can be defined via a rank function on subsets of $E$, a {\bf multimatroid} is a rank function $\rkk: \decoset \rightarrow \N$ that satisfies analogous axioms specified in \cref{def:multimatroid}.  A key feature of these axioms is that, for any maximal $\pi$-colored subset $T \subseteq E$, the restriction of $\rkk$ to subsets of $T$ (all of which are automatically $\pi$-colored) is a matroid in the usual sense; in this way, a multimatroid can roughly be viewed as a way of patching together a collection of matroids on equal-sized ground sets.

The role played by the permutohedral fan in the theory of matroids is played in the theory of multimatroids by the \textbf{$\pi$-colored fan} $\Sigma^\pi$, which we introduce in \cref{sec:fan}; it is the $n$-dimensional fan in the vector space
\[N^\pi_{\R} \coloneqq \R^{E_1}/\R \mathbf{1} \times \cdots \times \R^{E_n}/\R\mathbf{1}\]
with a cone
\[\sigma_{\scC} = \cone\{\ove_{S_1}, \ldots, \ove_{S_k}\}\]
for each chain $\scC = (S_1 \subsetneq \cdots \subsetneq S_k)$ of nonempty $\pi$-colored subsets of $E$, where $\ove_S$ denotes the image in $N^\pi_\R$ of $\sum_{i \in S} \e_i \in \R^E$.  For any maximal $\pi$-colored subset $T \subseteq E$, the intersection of $\Sigma^\pi$ with the subspace $\R_{\geq 0} \cdot \{\ove_i \; | \; i \in T\} \subseteq N^\pi_\R$ is identical to a distinguished orthant (which we call the {\bf\annoyingfanname}) of the stellahedral fan studied in \cite{EHL}.  So, analogously to the above perspective on multimatroids, the $\pi$-colored fan can roughly be viewed as a way of patching together a collection of {\annoyingfanname}s; see Remarks \ref{rem:M(T)fan} and \ref{rem:M(T)polytope}.

Toric geometry allows one to give an explicit presentation of the Chow ring of the toric variety $X_{\Sigma^\pi}$ as a quotient of
\[
\Z[x_S \; | \; S \in \decoset]
\] 
with relations described explicitly in \cref{prop:Chowring}.  In the special case where $|E_i| = r$ for each $i$ (in which case we refer to $\pi$ as a {\bf uniform} partition), the $\pi$-colored fan $\Sigma^{\pi}$ coincides with the fan $\Sigma^r_n$ studied in \cite{clader2022wonderful}, and the results of that work show that
\begin{equation}
    \label{eq:LrnSigmapi}
A^*(\Lrn) \cong A^*(\Sigma^r_n).
\end{equation}
The perspective on $\Sigma^r_n$ as a union of {\annoyingfanname}s can be given a precise geometric interpretation in this setting, as explained in \cref{rem:M(T)geometric}.

One crucial property of $\Sigma^{\pi}$ is that it is a balanced tropical fan, and in particular, by \cite[Proposition 5.6]{AHK}, there  exists a well-defined degree map 
\[
\int_{\Sigma^{\pi}} \colon A^n(\Sigma^{\pi}) \to \R
\] with the defining property that $\int_{\Sigma^{\pi}} \sigma = 1$ for each maximal  cone $\sigma$.  Our first main result is a combinatorial formula for these degrees.

Such a formula can be stated in terms of the generators $x_S$ of $A^*(\Sigma^\pi)$, and in some sense, these are the most geometrically natural generators: they correspond to the rays of $\Sigma^\pi$ and, in the uniform case, they are identified by the isomorphism \eqref{eq:LrnSigmapi} with the boundary divisors of $\Lrn$.  However, as was already understood in the matroid setting by \cite{BES}, the formula becomes much more combinatorially elegant when stated in terms of a different basis.  Specifically, for each $S \in \decoset$, set
\[h_S \coloneqq \sum_{S' \cap S \neq \emptyset} x_{S'}.\]
One way to understand the special role played by these alternative generators (analogously to \cite{DR}) is that, in the uniform case, they can be viewed under the isomorphism \eqref{eq:LrnSigmapi} as pullbacks of psi-classes under a family of forgetful morphisms from $\Lrn$ to a simpler moduli space, and the combinatorially rich structure of psi-classes is well-understood; see \cref{sec:psiclasses}. 

To state the formula for the degree of a monomial in the above generators of $A^*(\Sigma^\pi)$, we define, for any collection $S_1, \ldots, S_n$ of $\pi$-colored subsets (possibly with repetitions), the set
\[\T_\pi(S_1, \ldots, S_n) \coloneqq  \left\{ T \in \decosetmax \; \left| \; \begin{matrix} \text{there exists a bijection } \iota:[n] \rightarrow T\\ \text{with } \iota(i) \in S_i \text{ for each }i \end{matrix} \right.\right\}.\] In other words, $\T_\pi(S_1, \ldots, S_n)$ consists of the maximal elements of $\decoset$ containing precisely one element from each of the sets $S_i$. Then we have the following formula.

\begin{customthm}{A}
\label{thm:h-integrals}
For any collection $S_1, \ldots, S_n \in \decoset$ (with repetitions allowed), we have
\[\int_{\Sigma^{\pi}} h_{S_1} \cdots h_{S_n} = |\T_\pi(S_1, \ldots, S_n)|.\]
\end{customthm}

The analogue of this theorem in type $A$ follows from \cite[Theorem 5.1]{Postnikov} and \cite{BES}, while the type-$B$ case is proved in \cite[Theorem A(b)]{EFLS}.  At the same time, given the perspective on $h_S$ as a pullback of a psi-class in the uniform case, this theorem can be viewed as an analogue of the computations of intersection numbers of psi-classes on $\overline{\mathcal{M}}_{0,n}$ in \cite{BELL, Witten}.  This suggests that \cref{thm:h-integrals} may admit an algebro-geometric proof via the theory of psi-classes; we investigate this direction in Section~\ref{sec:psiclasses}, but we do not currently know of a complete proof by these methods.

Instead, we prove \cref{thm:h-integrals} via the geometry and combinatorics of multimatroids.  To do so, we introduce the independence polytopal complex $\IPC(\bfM)$ of a multimatroid in  \cref{def:IPC}, generalizing the independence polytope $\IP(M)$ of a matroid $M$.  We prove that the volume of this independence polytopal complex, when suitably normalized (see \cref{sec:IPC}), coincides with the degree of the top power of a divisor on $\Sigma^\pi$ naturally associated to $\bfM$.  Specifically, for any multimatroid $\bfM$, let
\[D_\bfM \coloneqq \sum_{S \in \decoset} \rkk(S) x_S \in A^1(\Sigma^\pi).\]
Then we have the following theorem. 

\begin{customthm}{B} \label{thm:B}
 (see \cref{thm:multimatroid_integral})
For any multimatroid $\bfM$ on $(E, \pi)$,
\begin{equation} \label{eq:multimatroid-integral}\tag{B} \int_{\Sigma^\pi} \left( D_\bfM\right)^n = \vol(\IPC(\bfM)). \end{equation}
\end{customthm}

\begin{remark}
Matroids are a special case of multimatroids, and the analogue of \cref{thm:B} in this setting is the equality, for any matroid $M$, of the degree of the divisor $D_M$ on the stellahedral fan and the volume of the independence polytope of $M$.  In this setting, the theorem can be deduced from a standard result in toric geometry: the volume of the polytope corresponding to a nef divisor $D$ on a rational, complete fan $\Sigma$ is equal to $\int_{\Sigma} D^n$ (see, for example \cite[page 111]{FultonToric}).  The divisor $D_M$ is known to be nef, and its corresponding polytope is precisely the independence polytope.  The multimatroid case, on the other hand, is considerably more subtle. 
\end{remark}

\Cref{thm:B} implies \cref{thm:h-integrals} via the results of \cite{EL}, as we show in \cref{sec:proofs}.  In order to prove \cref{thm:B}, the main idea is to use the work of Nathanson--Ross \cite{NR23}, which relates the degrees of top powers of divisors on tropical fans to the volumes of associated polytopal complexes known as normal complexes.  However, their results generally apply only to divisors satisfying a cubical condition, which $D_\bfM$ does not necessarily satisfy. We resolve this obstruction by extending the statement as an equality of functions on the space of multimatroids on $(E,\pi)$. 

The key idea is to consider a slight generalization of the notion of multimatroid that we refer to as an {\bf $\R$-multimatroid}, which consists of a rank function $\rkk\colon \decoset \rightarrow \R$ satisfying the properties listed in \cref{def:multimatroid-rank}. The notions of $D_\bfM$ and $\IPC(\bfM)$ extend to this setting, so the statement of \cref{thm:B} makes sense when $\bfM$ is an $\R$-multimatroid, and it is in this setting that we prove the theorem.
The advantage of this extension is two-fold:
\begin{enumerate}
    \item The space $\scM$ of $\R$-multimatroids on $(E,\pi)$ is a connected subspace of $\R^{\decoset}$, while the space of all multimatroids on $(E,\pi)$ is discrete (see \cref{sec:scM}).
    \item For a given $(E,\pi)$, one can always find an $\R$-multimatroid $\bfM$ for which $D_\bfM$ is cubical (\cref{lem:non-empty}); this is not true if we only consider multimatroids. 
\end{enumerate}

Given this extension, we prove \cref{thm:multimatroid_integral} (and hence  \cref{thm:B}) by showing that both sides of \eqref{eq:multimatroid-integral} are polynomial functions on $\scM$ that agree---via the work of \cite{NR23}---on the subset consisting of $\R$-multimatroids $\bfM$ for which $D_\bfM$ is cubical.  Since this locus is non-empty and open (\cref{cor:cubical-open}), this implies that the two polynomial functions agree on all of $\scM$, showing that the theorem holds for every $\R$-multimatroid.

\subsection{Future directions}
One of the reasons for our interest in \cref{thm:h-integrals} is that it provides evidence for the existence of an exceptional isomorphsim from the Chow ring $A(\Lrn)$ to the Grothendieck $K$-ring of vector bundles $K(\Lrn)$ similar to isomorphisms appearing in the study of matroids and delta-matroids \cite{BEST, EFLS, EHL, LLPP}.  In future work, we plan to study the conjectural existence of such an isomorphism, which we hope will yield a Hirzebruch--Riemann--Roch-type formula for computing Euler characteristics of vector bundles on $\overline{\calL}^{r}_n$.  In the case of matroids and delta-matroids, this isomorphism also relates to an isomorphism with the polytope algebra of generalized (type-$A$ or type-$B$) permutohedra, so we hope along the way to relate the $K$-ring of $\Lrn$ to a polytopal complex algebra.  Such a connection would also yield a relationship between Euler characteristics of vector bundles on $\Lrn$ and lattice point counts of certain polytopal complexes of multimatroids, analogous to the case of matroids \cite{CameronFink} and toric varieties \cite[Section 5.3]{FultonToric}. As applications of this circle of ideas, we hope to apply the present framework to the study of certain polynomials of multimatroids, embedded graphs, and knots \cite{ellis2013graphs}.

\subsection{Outline of the paper}

Section \ref{sec:coloredfan} introduces the fan $\Sigma^\pi$ and the relevant generators of its Chow ring.  In Section \ref{sec:normalcomplexes}, we first review from \cite{NR23} the definition of the normal complex $C_{\Sigma, \ast}(D)$ of a fan $\Sigma$ equipped with a divisor $D \in A^1(\Sigma)_\R$, as well as the relation between the volume of the normal complex and $\int_\Sigma D^n$ under the condition that $D$ is cubical.  We then specialize this framework to the case of $\Sigma^\pi$, in which case we can make both the notion of volume and the cubical condition  concrete.  We turn in \cref{sec:multimatroids} to the definition of multimatroids and $\R$-multimatroids, and we explain how to associate to any $\R$-multimatroid $\bfM$ both a divisor $D_{\bfM} \in A^1(\Sigma^\pi)_\R$ and an independence polytopal complex $\IPC(\bfM)$.  The key result of this section is that there is a nonempty open subset in the space of $\R$-multimatroids on which the divisor $D_{\bfM}$ is cubical, and the first result of Section~\ref{sec:proofs} is that $\IPC(\bfM)$ is equal to the normal complex $C_{\Sigma^\pi, \ast}(D_\bfM)$ (with equivalent notions of volume) in this case.  Combining these results with \cite{NR23} proves \cref{thm:B}, and the remainder of \cref{sec:proofs} is devoted to unpacking \cref{thm:B} from the perspective of the generators $h_S$ in order to deduce \cref{thm:h-integrals}.  Lastly, in \cref{sec:psiclasses}, we specialize to the case in which $\pi$ is uniform with $|E_i| = r$ for each $i$, and we use the isomorphism $A^*(\Sigma^\pi) \cong A^*(\Lrn)$ to reprove some cases of \cref{thm:h-integrals} from a geometric perspective.

\subsection*{Acknowledgments}
We are grateful to Federico Ardila, Renzo Cavalieri, Matt Larson, Rohini Ramadas, and Dusty Ross for helpful discussions.  We thank the anonymous referee for their insights, which in particular led to the proof of \cref{lem:(2.9)}. EC is supported by NSF CAREER Grant 2137060.  CE is supported by NSF Grant DMS--2246518.  DH is supported by NSF Grant DMS--2202900. SL is supported by NSF Grant DMS--1926686 and partially supported by a Coline M. Makepeace Fellowship from Brown University Graduate School.

\section{The \texorpdfstring{$\pi$}{pi}-colored fan}
\label{sec:coloredfan}

Throughout what follows, we fix a nonempty finite set $E$ with a partition
\[E = E_1 \sqcup \cdots \sqcup E_n,\]
or equivalently, a surjective map $\pi \colon  E \to [n]$ where $\pi^{-1}(i) = E_i$.  We refer to the partition as {\bf uniform} if $|E_i| = |E_j|$ for all $i,j \in [n]$.

\subsection{Colored sets}  \label{sec:transversal}

Viewing $[n]$ as the set of possible colors, and $\pi$ as a way to assign a unique color to every element of $E$, we are particularly interested in subsets of $E$ that contain at most one element of each color.  More precisely, we have the following definition.

\begin{definition}
    A subset $S \subseteq E$ is {\bf $\pi$-colored} (or just {\bf colored}, if $\pi$ is clear from context) if
        \[|S \cap E_i| \leq 1\]
    for each $i \in [n]$.  We denote by $\decoset$ the poset of colored subsets of $E$, ordered by inclusion.  Maximal elements of $\decoset$ are those that contain exactly one element of each $E_i$, and we denote the set of these by $\decosetmax$.  We often wish to exclude the possibility that $S = \emptyset$, so we denote $\decoseto = \decoset \setminus \{\emptyset\}$.
\end{definition}

\begin{remark} One can generalize the notion of colored sets by requiring that the set $S \cap E_i$ has at most a specified number of elements $c_i$, which might depend on $i \in [n]$.  Though we do not take up this generalization in this work, it would be interesting to investigate the extent to which the results of this paper generalize to that setting.
\end{remark}

\subsection{The \texorpdfstring{$\pi$}{pi}-colored fan} \label{sec:fan}

To define a fan associated to the data of $(E,\pi)$, we consider the real vector space $\R^E$ with standard basis $\{\e_i \; | \; i \in E\}$.  For each $X \subseteq E$, denote
\[\e_X \coloneqq \sum_{i \in X} \e_i.\]
Set
\begin{equation}
    \label{eq:NpiR}
 N^\pi_\R \coloneqq  \dfrac{\R^{E_1}}{\R \e_{E_1}} \times \dots \times \dfrac{\R^{E_n}}{\R \e_{E_n}},
 \end{equation}
and denote the image of $\e_i$ or $\e_X$ in $N^\pi_\R$ by $\ove_i$ or $\ove_X$, respectively. Similarly, for every $X \subseteq E$, denote by $\overline{\R}^{X}$ the image of $\R^{X} \subseteq \R^E$ in $N^\pi_\R$. 

\begin{definition} \label{def:Sigma} The {\bf $\pi$-colored fan} $\Sigma^\pi$ is the fan in $N^\pi_\R$ consisting of cones
\[
\sigma_{\scC} = \cone\{ \ove_{S_1}, \ldots, \ove_{S_k}\}
\] 
for each chain $\scC = \big(S_1 \subsetneq \cdots \subsetneq S_k \big)$ of elements $S_i \in \decoseto$.
\end{definition}

In the special case where $(E,\pi)$ is uniform with $|E_i| = r \geq 2$ for each $i$, the fan $\Sigma^\pi$ coincides with the $r$-permutohedral fan $\Sigma^r_n$ studied in \cite{clader2022wonderful}. If, furthermore, $r=2$, then it is the type-$B$ permutohedral fan $\Sigma_{B_n}$.  We begin by illustrating what $\Sigma^\pi$ looks like in this particularly simple case.

\begin{example}\label{ex:SigmaB2}
Let 
$E=\{{1}, {\bar{1}}\} \sqcup \{ {2}, {\bar{2}}\}$ with $E_1=\{{1}, {\bar{1}}\}$ and $E_2=\{ {2}, {\bar{2}}\}$. 
Then
\[N^\pi_\R = \frac{\R\e_1 \oplus \R\e_{\bar{1}}}{\R(\e_1+\e_{\bar{1}})} \times \frac{\R\e_2\oplus \R\e_{\bar{2}}}{\R(\e_2+\e_{\bar{2}})}.\]
Choosing the basis $\{\ove_{{1}}, \ove_{{2}}\}$ for $N^\pi_\R$ gives an isomorphism $N^\pi_\R \cong \R^2$ in which 
\[\ove_{{1}} \mapsto (1,0), \quad \ove_{\bar{1}} \mapsto (-1,0), \quad \ove_{{2}} \mapsto (0,1), \quad \ove_{\bar{2}} \mapsto (0,-1).\]
The fan $\Sigma^\pi = \Sigma_{B_2}$ is depicted under this isomorphism in Figure~\ref{fig:SigmaB2}.
\end{example}

\begin{figure}[ht]
    \centering
\begin{tikzpicture}[xscale=0.5, yscale=0.5]
\draw [fill=gray!30!white,gray!30!white] (-2.4,-2.4) rectangle (2.4,2.4);
\draw [ao(english),fill=ao(english),opacity=0.3] (0,0) -- (0,2.4) -- (-2.4,2.4)  -- cycle;
\draw [thick,->] (0,0) --(3,0);
\draw [thick,->] (0,0) --(-3,0);
\draw [ao(english),thick,->] (0,0) --(0,3);
\draw [thick,->] (0,0) --(0,-3);
\draw [thick,->] (0,0) --(2.7,2.7)  node[above right] {$\ove_{\{{1},{2}\}}$};
\draw [ao(english),thick,->] (0,0) --(-2.7,2.7)  node[above left] {$\ove_{\{\bar{1},{2}\}}$};
\draw [thick,->] (0,0) --(2.7,-2.7) node[below right] {$\ove_{\{{1},\bar{2}\}}$};
\draw [thick,->] (0,0) --(-2.7,-2.7)  node[below left] {$\ove_{\{\bar{1},\bar{2}\}}$};

\filldraw [ao(english)] (0,0) circle[radius=1mm];
\node at (0, -3.6) {$\ove_{\bar{2}}$};
\node [ao(english)] at (0, 3.6) {$\ove_{{2}}$};
\node at (3.6,0) {$\ove_{{1}}$};
\node at (-3.6,0) {$\ove_{\bar{1}}$};
\node [ao(english)] at (-1.1,2.8) {$\sigma_{\scC}$};
\end{tikzpicture}
\caption{The fan $\Sigma_{B_2}$, with the cone corresponding to $\scC= (\{{2}\} \subsetneq \{\bar{1},{2}\})$ shaded.}
    \label{fig:SigmaB2}
\end{figure}
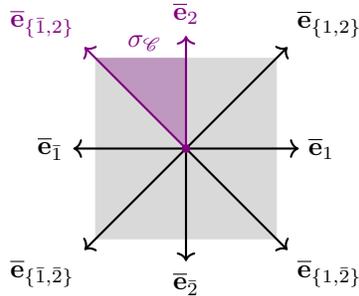

On the other hand, the following example illustrates a non-uniform case.

\begin{example}\label{ex:nonuniform}
    Let $E=\{{a}, {b}, {c}\} \sqcup \{{1}, {2}\}$ with $E_1 = \{a,b,c\}$ and $E_2 = \{1,2\}$.  Then Figure~\ref{fig:SigmaNU} depicts the associated fan $\Sigma^\pi$, under the isomorphism  $N^\pi_\R     \cong \R^3$     given by the basis $\{\ove_a, \ove_b, \ove_1\}$.
    \begin{figure}[h!]
\centering
\tdplotsetmaincoords{30}{35} 
\begin{tikzpicture}[tdplot_main_coords,scale=1.7]
    \coordinate (A) at (0,0,0);
 
    \filldraw [gray!70!white,opacity=0.5] (A) -- (0,1,0) -- (0,1,1)-- (0,0,1) -- cycle;
    \filldraw [gray!70!white,opacity=0.5] (A) -- (0,1,0) -- (0,1,-1) -- (0,0,-1) -- cycle;
    \draw[thick, ->] (0,0,0) -- (0,1.11,1.11);
	\draw[thick, ->] (0,0,0) -- (0,1.11,-1.11);
    \draw[thick, ->] (0,0,0) -- (0,1.11,0) node[above right] {$\ove_{{b}}$};

    \filldraw [gray!70!white,opacity=0.5] (A) -- (-1,-1,0) -- (-1,-1,1) -- (0,0,1) -- cycle;
    \filldraw [gray!70!white,opacity=0.5] (A) -- (-1,-1,0)  -- (-1,-1,-1) -- (0,0,-1) -- cycle;    
    \filldraw [gray!70!white,opacity=0.5] (A) -- (1,0,0) --(1,0,1) -- (0,0,1) -- cycle;
    \filldraw [gray!70,opacity=0.5] (A) -- (1,0,0) -- (1,0,-1) -- (0,0,-1) -- cycle;
    \filldraw [ao(english)!50,opacity=0.7] (A) -- (1,0,0) -- (1,0,-1) -- cycle;
    
	\draw[thick, ->] (0,0,0) -- (0,0,1.21) node[above] {$\ove_{{1}}$}; 
	\draw[ao(english), thick, ->] (0,0,0) -- (1.11,0,-1.11) node[right] {$\ove_{\{{a},{2}\}}$};		
	\draw[thick, ->] (0,0,0) -- (1.11,0,1.11);
	\draw[ao(english), thick, ->] (0,0,0) -- (1.11,0,0) node[right] {$\ove_{{a}}$};
    \draw[thick, ->] (0,0,0) -- (0,0,-1.21) node[below] {$\ove_{{2}}$};

    \draw[thick, ->] (0,0,0) -- (-1.11,-1.11,0) node[left] {$\ove_{{c}}$};

    \draw[thick, ->] (0,0,0) -- (-1.11,-1.11,1.11);
    \draw[thick, ->] (0,0,0) -- (-1.11,-1.11,-1.11);
      
	\filldraw [ao(english)] (0,0,0) circle[radius=0.3mm];    

    \node [ao(english)] at (1.34,-0.2,0) {$\sigma_{\scC}$};
  \end{tikzpicture}
  \caption{The fan $\Sigma^\pi$ for $(E,\pi)$ as in \cref{ex:nonuniform}, with the cone corresponding to $\scC= (\{{a}\} \subsetneq \{{a},{2}\})$ shaded.}
\label{fig:SigmaNU}\end{figure}
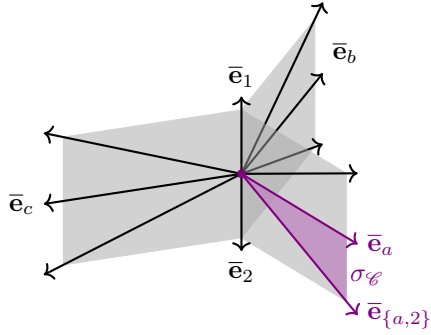 
\end{example}

\begin{remark}
    \label{rem:M(T)fan}
For any $T \in \decosetmax$, the restriction of $\Sigma^\pi$ to the subset $\R^T_{\geq 0} \cong \R^n_{\geq 0}$ is a fan $\Sigma_T$ that can be identified with the fan in $\R^n_{\geq 0}$ consisting of a cone for each chain of subsets of $[n]$.
This fan in $\R_{\geq 0}^n$, which we call the \textbf{\annoyingfanname}, is a distinguished portion (the negative orthant) of the stellahedral fan defined in \cite{BHMPW22}.
Thus, $\Sigma^\pi$ can be viewed as a union of copies of the $n$-dimensional \annoyingfanname, one for each $T \in \decosetmax$.  Given the connection between stellahedral fans and matroids studied in \cite{EHL}, this observation can be seen as the fan-theoretic analogue of the perspective mentioned in the introduction that a multimatroid is a way of patching together a collection of matroids.
\end{remark}

\subsection{The Chow ring of the \texorpdfstring{$\pi$}{pi}-colored fan}

Standard results in toric geometry calculate the Chow ring of $\Sigma^\pi$ (or, equivalently, of the associated toric variety). 

\begin{proposition} 
\label{prop:Chowring}
    The Chow ring of $\Sigma^\pi$ is
    \begin{equation*}
        A^*(\Sigma^\pi) = \frac{\Z[x_S \; | \; S \in \decoseto]}{\mathcal{I} + \mathcal{J}},
    \end{equation*}
    where $\mathcal{I}$ is the ideal of quadratic relations
    \begin{equation}
    \label{eq:quadratic}
    \mathcal{I} \coloneqq \langle x_Sx_{S'} \; | \; S \text{ and } S' \text{ incomparable} \rangle 
    \end{equation}
    (in which ``incomparable" means that neither $S \subseteq S'$ nor $S' \subseteq S$) and $\mathcal{J}$ is the ideal of linear relations
    \begin{equation}
    \label{eq:linear}
        \mathcal{J} \coloneqq \left\langle \left.\sum_{S \ni e} x_S - \sum_{S \ni e'} x_S \; \right| \; \text{ distinct }e, e' \in E_i \text{ for some } i\right\rangle.
    \end{equation}
\end{proposition}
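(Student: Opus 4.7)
The plan is to apply the standard Danilov--Jurkiewicz presentation of the integral Chow ring of a smooth toric fan $\Sigma$: one has $A^*(\Sigma) = \Z[x_\rho \mid \rho \text{ a ray of } \Sigma]/(SR_\Sigma + \mathcal{L})$, where $SR_\Sigma$ is the Stanley--Reisner ideal generated by $x_{\rho_1} \cdots x_{\rho_k}$ for non-faces of $\Sigma$, and $\mathcal{L}$ is the ideal generated by $\sum_\rho \langle m, u_\rho \rangle x_\rho$ as $m$ ranges over a generating set of the dual lattice. The task is then to verify that $\Sigma^\pi$ is smooth (so the presentation applies), identify its rays with $\decoseto$, and match the Stanley--Reisner and linear ideals with $\mathcal{I}$ and $\mathcal{J}$, respectively.

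For the rays: by construction, the rays of $\Sigma^\pi$ are exactly $\R_{\geq 0}\ove_S$ for $S \in \decoseto$ (one for each singleton chain $(S)$), and they are pairwise distinct. For smoothness, let $\sigma_\scC$ be a maximal cone coming from a maximal chain $\scC = (S_1 \subsetneq \cdots \subsetneq S_n)$, and let $t_k$ denote the unique element of $S_k \setminus S_{k-1}$ (with $S_0 = \emptyset$). Then $\ove_{S_k} = \ove_{t_1} + \cdots + \ove_{t_k}$, so the sublattice generated by the ray generators of $\sigma_\scC$ coincides with the one generated by $\ove_{t_1}, \ldots, \ove_{t_n}$. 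Since maximality of $\scC$ forces $t_1, \ldots, t_n$ to lie in pairwise distinct blocks $E_{j_1}, \ldots, E_{j_n}$ of the partition, and in each factor $\Z^{E_j}/\Z\e_{E_j}$ of the natural integral lattice $N^\pi \subset N^\pi_\R$ the class $\ove_t$ of any $t \in E_j$ is part of a $\Z$-basis, it follows that $\{\ove_{t_1}, \ldots, \ove_{t_n}\}$, and hence $\{\ove_{S_1}, \ldots, \ove_{S_n}\}$, extends to a $\Z$-basis of $N^\pi$. This proves smoothness.

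It remains to identify the two ideals. A collection $\{\ove_{S_1}, \ldots, \ove_{S_k}\}$ of ray generators spans a cone of $\Sigma^\pi$ if and only if $\{S_1, \ldots, S_k\}$ is a chain in $\decoseto$, so the minimal non-faces are precisely the two-element antichains, i.e.\ pairs of incomparable $\pi$-colored subsets, giving $SR_{\Sigma^\pi} = \mathcal{I}$. For the linear ideal, the dual lattice to $N^\pi$ is
\[
M^\pi = \{(a_i)_{i \in E} \in \Z^E \mid \textstyle\sum_{i \in E_j} a_i = 0 \text{ for all } j \in [n]\},
\]
which is generated by the vectors $m_{e,e'} \in \Z^E$ (with a $+1$ in position $e$, a $-1$ in position $e'$, and zeros elsewhere) for distinct pairs $e, e' \in E_j$. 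Using that $S$ being $\pi$-colored forces $|S \cap \{e, e'\}| \leq 1$, the pairing $\langle m_{e,e'}, \ove_S \rangle$ is $+1$, $-1$, or $0$ according to whether $e \in S$, $e' \in S$, or neither; the corresponding relation is exactly $\sum_{S \ni e} x_S - \sum_{S \ni e'} x_S = 0$, and so $\mathcal{L} = \mathcal{J}$.

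I expect the main obstacle to be the smoothness verification, since that is what allows the integral (rather than merely rational) presentation. The key combinatorial input is that successive members of a maximal chain differ by a single element lying in a distinct block of the partition, so the ray generators of any maximal cone collectively involve at most one element of each $E_j$, and such a collection extends trivially to a $\Z$-basis of $N^\pi$. Once smoothness is in place, the identifications of $SR_{\Sigma^\pi}$ and $\mathcal{L}$ with $\mathcal{I}$ and $\mathcal{J}$ are immediate from the combinatorial description of $\Sigma^\pi$ and its dual lattice.
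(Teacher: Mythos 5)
Your proposal is correct and takes essentially the same route as the paper, which offers no argument beyond invoking ``standard results in toric geometry'': what you have written out is precisely the Danilov--Jurkiewicz-type presentation being cited, with the needed verifications (unimodularity of the cones $\sigma_{\scC}$ via the single-element steps of maximal chains, rays indexed by $\decoseto$, Stanley--Reisner ideal equal to $\mathcal{I}$, and the linear ideal generated by the pairings against the block-wise differences $\e_e - \e_{e'}$ equal to $\mathcal{J}$) all carried out correctly.
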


In fact, although the divisors $x_S$ for $S \in \decoseto$ are manifestly generators of the Chow ring $A^{\ast}(\Sigma^{\pi})$, another generating set has more elegant intersection-theoretic properties (and has a natural geometric interpretation explained in \cref{sec:psiclasses}). Namely, for each $S \in \decoseto$, we define
\begin{equation} \label{eq:hS}
h_S  \coloneqq  \sum_{S' \cap S \neq \emptyset} x_{S'}.
\end{equation}
These indeed generate $A^*(\Sigma^\pi)$, as  a result of the following lemma.

\begin{lemma}
    \label{lem:hStoxS}
For any $S \in \decoseto$, one has
\[
x_S = \sum_{\substack{U,T \in \decoseto\\ U\subseteq T \supseteq S}} (-1)^{|T|+|S|+|U|+1}h_U.\]
\end{lemma}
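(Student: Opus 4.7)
The plan is to substitute the definition $h_U = \sum_{S' \cap U \neq \emptyset} x_{S'}$ into the right-hand side and verify that, for each $S' \in \decoseto$, the coefficient of $x_{S'}$ equals the Kronecker delta $\delta_{S,S'}$. After interchanging the order of summation, this coefficient is
\[
(-1)^{|S|+1} \sum_{\substack{T \in \decoseto\\ T \supseteq S}} (-1)^{|T|} \sum_{\substack{U \in \decoseto,\, U \subseteq T \\ U \cap S' \neq \emptyset}} (-1)^{|U|}.
\]

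A preliminary simplification removes the coloring constraints on $T$ and $U$: any subset of a $\pi$-colored set is itself $\pi$-colored, so once $T \in \decoseto$ is fixed the constraint $U \in \decoseto$ reduces to $U \neq \emptyset$, and since the empty set has empty intersection with $S'$ we may harmlessly extend the inner sum over all $U \subseteq T$. Likewise the constraint $T \in \decoseto$ amounts to $T$ being $\pi$-colored, since $T \supseteq S$ forces $T \neq \emptyset$.

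The inner sum is then a standard Boolean inclusion--exclusion: from $\sum_{U \subseteq T}(-1)^{|U|} = 0$ (valid since $T \neq \emptyset$), one obtains
\[
\sum_{\substack{U \subseteq T\\ U \cap S' \neq \emptyset}} (-1)^{|U|} \;=\; -\sum_{U \subseteq T \setminus S'} (-1)^{|U|} \;=\; -[T \subseteq S'],
\]
where $[\,\cdot\,]$ denotes the Iverson bracket. Plugging this back in, and using again that every subset of the $\pi$-colored set $S'$ is $\pi$-colored, the coefficient of $x_{S'}$ collapses to
\[
(-1)^{|S|}\sum_{S \subseteq T \subseteq S'} (-1)^{|T|}.
\]
If $S \not\subseteq S'$, the range is empty and this vanishes; if $S \subseteq S'$, writing $T = S \sqcup T'$ with $T' \subseteq S' \setminus S$ gives $(-1)^{|S|}\cdot(-1)^{|S|}[S' \setminus S = \emptyset] = [S = S'] = \delta_{S,S'}$, as desired.

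The whole argument amounts to Möbius inversion on a Boolean lattice carried out twice, so the only care required is sign-tracking and the preliminary reduction from $\pi$-colored to arbitrary subsets. No serious obstacle is expected; the point worth highlighting is simply that the downward-closure of $\decoset$ under inclusion lets the coloring hypothesis be discarded inside the two nested Boolean sums.
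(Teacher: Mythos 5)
Your proof is correct. You verify the claimed identity directly: expanding each $h_U$ via its definition $h_U = \sum_{S' \cap U \neq \emptyset} x_{S'}$, swapping the order of summation, and checking that the coefficient of $x_{S'}$ is $\delta_{S,S'}$ through two nested alternating sums over Boolean intervals (noting correctly that the coloring constraints on $U$ and $T$ become vacuous once they are subsets of the colored sets $T$ and $S'$, respectively). The signs and the inner reduction $\sum_{U \subseteq T,\, U \cap S' \neq \emptyset}(-1)^{|U|} = -[T \subseteq S']$ all check out. The paper's route is organized differently: it introduces the auxiliary generators $f_T \coloneqq \sum_{Z \supseteq T} x_Z$, proves $h_S = \sum_{T \subseteq S}(-1)^{|T|+1} f_T$ by inclusion–exclusion, and then applies M\"obius inversion on the Boolean lattice twice (once to express $f$ in terms of $h$, once to express $x$ in terms of $f$), composing the two to \emph{derive} the stated formula. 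Your argument is more elementary and self-contained — it needs only the vanishing of the alternating sum over a nonempty Boolean lattice and no intermediate basis — but it presupposes the formula and verifies it, whereas the paper's derivation explains where the expression comes from and exhibits the intermediate generating set $f_T$, which is the natural analogue of the nef-type generators appearing in the matroid literature. Both establish the identity at the level of the free module on the $x_{S'}$, so both descend to $A^*(\Sigma^\pi)$; there is no gap.
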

\begin{proof}
It is helpful to first express $h_S$ in terms of yet another generating set $f_S$, defined by
\begin{equation}
    \label{eq:fS}
f_T \coloneqq  \sum_{Z\supseteq T} x_Z
\end{equation}
for each $T \in \decoseto$.  Then
\begin{equation}
    \label{eq:fStohS}
h_S = \sum_{T\subseteq S} (-1)^{|T|+1} f_T,
\end{equation}
as one sees from the following standard inclusion-exclusion argument.  The right-hand side can be expanded as 
\begin{equation}
\label{eq:fTexpanded}\sum_{i \in S} f_i - \sum_{i,j \in S} f_{ij} + \sum_{i,j,k \in S} f_{ijk} - \cdots + (-1)^{|S|} f_S.
\end{equation}
The first term of these sums is equal to
\[\sum_{\substack{i \in S\\ T \ni i}} x_T,\]
and while each of these $x_T$'s appears in the definition of $h_S$, those for which $T$ contains two distinct elements of $S$ are double-counted.  The second summand of \eqref{eq:fTexpanded} subtracts these, but this double-counts those $x_T$ for which $T$ contains three distinct elements of $S$, and so on.

Since any interval in the poset $\decoseto$ is isomorphic to an interval in the Boolean lattice, the M\"obius function of this poset is $\mu(T,S)=(-1)^{|T|+|S|}$ for any $S\subseteq T$. Thus, the relation \eqref{eq:fStohS} can be inverted via M\"obius inversion (which is effectively inclusion-exclusion again) to yield
\begin{equation}
    \label{eq:hStofS}
f_S = \sum_{T \subseteq S} (-1)^{|T|+1} h_T,
\end{equation}
and by the same token, the relation \eqref{eq:fS} can be inverted to yield
\begin{equation}
     \label{eq:fStoxS}
x_S = \sum_{T \supseteq S} (-1)^{|T|+|S|}f_T.
\end{equation}
Combining these two equations gives
\[x_S  = \sum_{T \supseteq S} (-1)^{|T|+|S|} \left( \sum_{U \subseteq T} (-1)^{|U|+1} h_U\right),\]
which is precisely the statement of the lemma.
\end{proof}

\begin{remark}
It is occasionally convenient to allow that $S = \emptyset$, in which case we set $x_{\emptyset} = h_{\emptyset} = 0$.  However, we caution the reader that \cref{lem:hStoxS} does not hold for $S = \emptyset$.
\end{remark}

\section{Normal complexes}
\label{sec:normalcomplexes}

Recall that \cref{thm:B} is a statement about the degree of the top power of a particular divisor on $\Sigma^\pi$.  Such degrees have been related to the volumes of {\bf normal complexes} introduced by Nathanson--Ross \cite{NR23}. In this section, we review the relevant background---in a slightly less general setting than the framework of \cite{NR23}---and study its application to the $\pi$-colored fan.

\subsection{Background on normal complexes}

Let $N$ be a lattice and let $\Sigma$ be a fan in the vector space $N_\R \coloneqq  N \otimes \R$.  Assume that $\Sigma$ is
\begin{itemize}
    \item unimodular, meaning that every cone has generators that can be extended to a basis of $N$;
    \item pure of dimension $n$, meaning that all maximal cones are $n$-dimensional;
    \item tropical and balanced,\footnote{The definition of tropical fan is generally stated in terms of the existence of a weight function on the maximal cones of $\Sigma$ under which a weighted balancing condition is satisfied; see, for instance, \cite[Section 2.7]{NR23}.  The requirement that $\Sigma$ is balanced in our case means that the weight function is identically $1$, and it is a result of \cite[Proposition 5.6]{AHK} that this is equivalent to the existence of a degree map as stated.  It is worth noting that the definition of the balancing condition in \cite{AHK} and \cite{NR23} is subtlely different from the condition used in some other sources such as \cite[equation (3)]{FultonSturmfels}: the former is stated in terms of primitive integral generators $u_{\sigma \setminus \tau}$ of rays $\sigma \setminus \tau$, where $\sigma$ is a maximal cone and $\tau \subseteq \sigma$ a codimension-one face, whereas the latter replaces $u_{\sigma \setminus \tau}$ with a lattice point $n_{\sigma,\tau}$ whose image generates the quotient $N_\sigma/N_{\tau}$.  The two definitions coincide when $\Sigma$ is unimodular, which is sufficient for our purposes; avoiding this subtlety is the reason we assume $\Sigma$ is unimodular in this subsection.} meaning that there exists a linear degree map $\int_\Sigma\colon A^n(\Sigma) \rightarrow \R$ such that $\int_\Sigma X_\sigma = 1$ for each maximal cone $\sigma$ of $\Sigma$, where $X_\sigma$ is the product of the generators of $A^*(\Sigma)$ associated to the rays of $\sigma$.
\end{itemize}
Given such a fan, associated to any choice of inner product $\ast$ on $N_\R$ and any divisor $D \in A^1(\Sigma)_\R$, one can define a normal complex by truncating each cone of $\Sigma$ by affine hyperplanes normal to the rays; here, the notion of normal is determined by $\ast$ and the distance of the hyperplanes from the origin is determined by $D$.  More precisely, the definition is as follows.

For each ray $\rho$ of $\Sigma$, denote by $u_\rho \in N_\R$ the primitive integral generator (i.e., the first nonzero element of $N$ that lies on $\rho$), which exists because $\Sigma$ is unimodular.  Denoting by $\{x_\rho \; | \; \rho \in \Sigma(1)\}$ the generators of $A^*(\Sigma)$ associated to the rays of $\Sigma$, any divisor $D \in A^1(\Sigma)_\R$ can be expressed, not necessarily uniquely, as
\[D = \sum_{\rho \in \Sigma(1)} a_\rho x_\rho\]
for some $a_\rho \in \R$.  From here, associated to each cone $\sigma \in \Sigma$, one defines a polytope
\begin{equation} \label{eq:Psigma} P_{\sigma,\ast}(D) \coloneqq \{m \in \sigma \; | \;  m \ast u_\rho \leq a_\rho \ \text{ for all } \rho\in \sigma(1)\} \subseteq N_\R,
\end{equation}
where $\sigma(1)$ denotes the set of rays of $\sigma$.

The normal complex of $\Sigma$ associated to $\ast$ and $D$ is the union of these polytopes as $\sigma$ ranges over all maximal cones.  However, in order to ensure that these polytopes meet along faces---and therefore their union forms a polytopal complex---one must impose the following compatibility condition on $\ast$ and $D$.

\begin{definition}
\label{def:cubical}
    A divisor $D$ on $\Sigma$ is called {\bf pseudo-cubical} with respect to the inner product $\ast$ if the bounding hyperplanes of $P_{\sigma,\ast}(D)$ meet within $\sigma$ for all cones $\sigma$; that is, for each $\sigma \in \Sigma$ (not necessarily maximal), we have
    \[\sigma \cap \{m \in N_\R \; | \; m \ast u_\rho = a_\rho \text{ for all } \rho \in \sigma(1)\} \neq \emptyset.\]
    The divisor $D$ is {\bf cubical} with respect to $\ast$ if \[\sigma^\circ \cap \{m \in N_\R \; | \; m \ast u_\rho = a_\rho \text{ for all } \rho \in \sigma(1)\} \neq \emptyset\] for all $\sigma \in \Sigma$, where $\sigma^\circ$ denotes the interior of $\sigma$.
\end{definition}

From here, one can define the normal complex precisely as follows.

\begin{definition}
The {\bf normal complex} $\NC_{\Sigma,\ast}(D)$ of $\Sigma$ with respect to $\ast$ and $D$ is the union
\begin{equation}\label{eq:defNCD}
\NC_{\Sigma,\ast}(D) \coloneqq \bigcup_{\sigma\in \Sigma(n)} P_{\sigma,\ast}(D).
\end{equation}
It has the structure of a polytopal complex when $D$ is pseudo-cubical.
\end{definition} 

We refer the reader to \cref{sec:NCcolfan} for several examples of normal complexes in the specific context relevant to the current work.

In the case where $\Sigma$ is complete, the normal complex $\NC_{\Sigma,\ast}(D)$ is the classical normal polytope associated to $D$.  Moreover, a fundamental result of toric geometry (see, for example, \cite[Corollary, page 111]{FultonToric}) states that, when $D$ is nef, the volume of its normal polytope is equal to the degree $\int_\Sigma D^n$.  The main theorem of \cite{NR23} asserts that the analogous result is true when $\Sigma$ is not necessarily complete, with the normal complex now playing the role of the normal polytope.

To state the result precisely, care must be taken in how the volume is defined.  Specifically, for each $\sigma \in \Sigma(n)$, let 
\[N_\sigma \coloneqq N \cap \Span_\R(\sigma),\]
and let
\[M_\sigma \coloneqq N_\sigma^{\vee} = \Hom_\Z(N_\sigma, \Z).\]
These are lattices in different vector spaces, but the inner product $\ast$ allows one to view them both as lattices in the same space $\Span_\R(\sigma)$.  In this way, one can define the volume of any polytope in $\Span_\R(\sigma)$ by declaring
\[
\Vol_{\sigma,\ast}(\text{any $n$-simplex unimodular with respect to $M_\sigma$}) = 1,
\]
where {\bf unimodular} means that the simplex is lattice-equivalent to the $n$-simplex with vertices at $\mathbf{0}$ and the standard basis vectors.  This definition of volume, in particular, allows us to define the volume of the polytope $P_{\sigma,\ast}(D) \subseteq \Span_\R(\sigma)$, and adding these over each maximal cone defines the volume of the normal complex:
\begin{equation}
    \label{eq:volNCD}
    \vol_{\Sigma,\ast} \left(\NC_{\Sigma,\ast}(D)\right)  \coloneqq   \sum_{\sigma\in \Sigma(n)} \vol_{\sigma,\ast} \left(P_{\sigma,\ast}(D)\right).
\end{equation} 
The main theorem of \cite{NR23}, in the generality we will need, is the following.

\begin{thm}\label{thm:NR23main} \cite[Theorem 6.3]{NR23} Let $\Sigma$ be a unimodular, pure $n$-dimensional, balanced tropical fan in $N_\R$, let $\ast$ be an inner product on $N_\R$, and let $D \in A^1(\Sigma)_\R$ be a divisor that is pseudo-cubical with respect to $\ast$.  Then 
\[
\int_\Sigma D^n = \vol_{\Sigma,\ast} \left(\NC_{\Sigma,\ast}(D)\right).
\]
\end{thm}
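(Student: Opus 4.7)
The plan is to exploit polynomiality: on the open cubical subcone of $A^1(\Sigma)_\R$ (where all bounding hyperplanes meet strictly inside their respective cones), both sides of the claimed identity are polynomial functions of degree $n$ in $D$, and the general pseudo-cubical case then follows by a continuity argument along the boundary of this cone. The left-hand side $\int_\Sigma D^n$ is manifestly a polynomial of degree $n$ in the coefficients of any chosen expansion $D = \sum_\rho a_\rho x_\rho$, by multilinearity of the intersection product. For the right-hand side, the combinatorial type of each $P_{\sigma,\ast}(D)$ is locally constant on the cubical locus, so each $\vol_{\sigma,\ast}(P_{\sigma,\ast}(D))$ is polynomial of degree $n$ there, as is the total $\vol_{\Sigma,\ast}(\NC_{\Sigma,\ast}(D))$. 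It therefore suffices to verify agreement on a Zariski-dense subset of the cubical locus.

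To verify agreement, I would aim to reduce to the classical toric-geometric fact that, for a complete unimodular fan and a nef divisor, the top self-intersection equals the lattice-normalized volume of the Newton polytope. One natural approach is to embed $\Sigma$ into a smooth projective completion $\bar\Sigma$ (which exists for any unimodular fan after a suitable refinement), choose a nef lift $\bar D$ of $D$, and decompose both $\int_{\bar\Sigma} \bar D^n$ and $\vol(P_{\bar D})$ into contributions coming from $\Sigma$ and from $\bar\Sigma \setminus \Sigma$; the identity on $\bar\Sigma$ then descends to $\Sigma$ provided the two complementary contributions match. A more intrinsic alternative uses the piecewise linear function $\psi_D$ on $|\Sigma|$ determined by $D$: the cubical condition ensures that $P_{\sigma,\ast}(D)$ is precisely the region bounded in $\sigma$ by $\psi_D$ via the inner product $\ast$, and a tropical Monge--Ampere computation identifies each maximal-cone contribution to $D^n$ with the corresponding polytope volume.

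The main obstacle I expect is the boundary-cancellation step. Because $\Sigma$ is non-complete, the polytopal complex $\NC_{\Sigma,\ast}(D)$ has a genuine boundary, and one must verify that the ``missing'' contributions there are exactly compensated by the balancing condition of $\Sigma$ so that no spurious terms appear on either side. The pseudo-cubical hypothesis plays a crucial supporting role: it guarantees that the local polytopes $P_{\sigma,\ast}(D)$ meet along common faces (so their volumes add without overcounting) and that the bounding hyperplanes stay within their respective cones (so no volume is lost outside $|\Sigma|$). Combined with the balancing of $\Sigma$, which controls how maximal cones glue across codimension-one strata, this should give the desired identity on a Zariski-dense subset, and polynomiality together with continuity then extends it to the full pseudo-cubical locus.
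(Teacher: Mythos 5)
This statement is not proved in the paper at all: it is quoted verbatim from Nathanson--Ross \cite[Theorem 6.3]{NR23}, and the paper's role is only to specialize it (via \cref{cor:NR23-application-Sr}) to $\Sigma^\pi$. So the relevant question is whether your sketch would constitute an independent proof, and as written it does not. The outer layer of your argument (polynomiality of $\int_\Sigma D^n$ in the coefficients $a_\rho$, local constancy of the combinatorial type on the cubical locus, continuity to pass from cubical to pseudo-cubical) is fine and is in the same spirit as how the present paper later extends the identity from cubical to general $\R$-multimatroids. But the actual content of the theorem is the verification on the cubical locus, and there you only name two candidate mechanisms without carrying either out. The completion route has a genuine obstruction: even granting a smooth completion $\bar\Sigma$ of a unimodular fan, there is no reason a pseudo-cubical (or cubical) divisor $D$ on a non-complete tropical fan admits a \emph{nef} lift $\bar D$; nefness on a completion is a strong positivity condition, and for the fans of interest here (e.g.\ $\Sigma^\pi$, which is not complete) such lifts need not exist, so you cannot simply ``decompose both sides into contributions from $\Sigma$ and $\bar\Sigma\setminus\Sigma$.'' Moreover, specifying how those two complementary contributions would be matched is exactly the boundary-cancellation step you flag as an expected obstacle --- that step \emph{is} the theorem, and invoking ``a tropical Monge--Amp\`ere computation'' at that point is circular, since identifying the maximal-cone contributions to $D^n$ with the polytope volumes in the non-complete, merely balanced setting is precisely what must be proved.

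Concretely: Nathanson--Ross do not argue by completing the fan; their proof works intrinsically with the balanced tropical fan, using the degree map of \cite[Proposition 5.6]{AHK} and an inductive analysis of normal complexes (cross-sections of the polytopes $P_{\sigma,\ast}(D)$ matching the normal complexes of divisor restrictions to stars of rays), first in the cubical case and then extending to pseudo-cubical by a limiting argument. If you want a self-contained proof rather than a citation, you would need to supply that inductive step --- some precise statement showing that slicing $\NC_{\Sigma,\ast}(D)$ at height determined by one ray reproduces the normal complex of the restricted divisor on the star fan, together with the corresponding recursion $\int_\Sigma D^n = \sum$ (lower-dimensional degrees) --- rather than gesturing at a reduction to the complete toric case, which does not go through.
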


\subsection{Application to the \texorpdfstring{$\pi$}{pi}-colored fan}  \label{sec:NCcolfan} We now specialize the previous subsection to the case in which $\Sigma$ is the $\pi$-colored fan $\Sigma^\pi$.  In this case, the lattice $N$ is
\[ N^\pi \coloneqq  \dfrac{\Z^{E_1}}{\Z \e_{E_1}} \times \dots \times \dfrac{\Z^{E_n}}{\Z \e_{E_n}},\]
so that $N^\pi_\R$ is as in \eqref{eq:NpiR}.  It is straightforward to see that $\Sigma^\pi$ is indeed unimodular and pure $n$-dimensional.  To see that it is a balanced tropical fan, one must verify the condition of \cite[equation (2.16)]{NR23} with $w(\sigma)=1$: namely, for each chain $\scC$ of length $n-1$,
\begin{equation}
    \label{eq:balancing}
\sum_{\substack{\scC' \in \maxchain\\ \sigma_{\scC} \subseteq \sigma_{\scC'}}} \ove_{\scC' \setminus \scC} \in \Span_{\R}(\sigma_{\scC}),
\end{equation}
where $\maxchain$ denotes the set of maximal chains and $\scC' \setminus \scC$ is the unique colored set in the chain $\scC'$ that is not in the chain $\scC$.  If the maximal element of $\scC$ has size $n$, then we can write $\scC = (S_1 \subsetneq \cdots \subsetneq \widehat{S_i} \subsetneq \cdots \subsetneq S_n)$ for some $i \in [n]$, with $|S_j| = j$ for all $j \in [n] \setminus \{i\}$.  In this case, one sees that the sum in \eqref{eq:balancing} equals
\[\sum_{x \in S_{i+1} \setminus S_{i-1}} \ove_{S_{i-1} \cup\{x\}} = \ove_{S_{i-1}}  + \ove_{S_{i+1}},\]
which indeed lies in $\Span_{\R}(\sigma_{\scC})$.  If the maximal element of $\scC$ does not have size $n$, then the sum \eqref{eq:balancing} is $\sum_{j \in E_i} \ove_j$ for some $i \in [n]$, which equals zero and therefore also lies in $\Span_{\R}(\sigma_{\scC})$.

To apply the machinery of \cite{NR23}, we must now choose an inner product on 
 $N^\pi_\R$.  To define the inner product, recall that $\overline{\R}^{E_i}$ denotes the image of $\R^{E_i}$ in $N^\pi_\R$.  
Choose an inner product $\ast_i$ on each $\overline{\R}^{E_i}$ with \[\ove_j \ast_{i} \ove_j = 1,\]  for all $j \in E_i$, and set $\ast \coloneqq  \ast_1 \times \cdots \times \ast_n$.

\begin{remark}
    There is a non-canonical isomorphism $\overline{\R}^{E_i} \cong \R^{|E_i|-1}$ given by choosing $a \in E_i$ and sending $\{\ove_j \; | \; j \neq a\}$ to the standard basis vectors, while sending $\ove_a$ to the vector $(-1, -1, \ldots, -1)$.  We note that $\ast_i$ is not the standard inner product on $\R^{|E_i|-1}$ under this isomorphism unless $|E_i|=2$.  For example, if $E_i = \{1,2,3\}$ and we choose the isomorphism $\overline{\R}^{E_i} \cong \R^{2}$ given by
    \[\ove_1 \mapsto (-1,-1), \quad \ove_2 \mapsto (1,0), \quad \ove_3 \mapsto (0,1),\]
    then $\ast_i$ is not the standard inner product on $\R^2$ but can instead be taken to be
    \[(x_1,y_1)\ast_i(x_2,y_2) \coloneqq x_1x_2+y_1y_2-\frac{1}{2}(x_1y_2+x_2y_1). \]
    This example can be generalized to all dimensions to give an explicit formula for $\ast_i$.
\end{remark}

Under this choice of inner product, we can describe the normal complex $\NC_{\Sigma^{\pi}, \ast}(D)$ explicitly as follows.  First, note that by \cref{def:Sigma}, the maximal cones of $\Sigma^\pi$ are of the form $\sigma_\scC$ in which $\scC$ is a maximal chain in $\decoset$, so we can rewrite \eqref{eq:defNCD} as 
\[\NC_{\Sigma^{\pi}, \ast}(D) = \bigcup_{\scC \in \maxchain} P_{\sigma_{\scC}, \ast} (D).\]
Grouping these chains according to their maximal element, which is necessarily an element of $\decosetmax$, we have
    \begin{equation}
    \label{eq:NCSigmapi}
    \NC_{\Sigma^{\pi}, \ast}(D) = \bigcup_{T \in \decosetmax} \bigcup_{\scC \in \mchain(T)} P_{\sigma_{\scC, \ast}} (D),
    \end{equation}
where $\mchain(T)$ is the subset of $\maxchain$ consisting of maximal chains with $T$ as their maximal element.  Note, in this grouping, that all of polytopes $P_{\sigma_{\scC, \ast}} (D)$ lie in the same subspace $\overline{\mathbb{R}}^T \subseteq N^\pi_\R$.  Moreover, the volume functions $\vol_{\sigma_{\scC}, \ast}$ for any $\scC \in \mchain(T)$ are all restrictions of the same volume function on $\overline{\R}^T$, which we now describe.

To do so, note that for any $T \in \decosetmax$, the fact that $T$ is colored implies that $\overline{\R}^T \cong \R^T$, and the fact that it is maximal further implies that $\overline{\R}^T$ is isomorphic to $\R^n$ via a basis of the form
\begin{equation}
    \label{eq:ONbasis}
\{\ove_i \; 
| \; i \in T\}.
\end{equation} 
Now, let $\vol_{T}$ be the volume function on $\R^n \cong \overline{\R}^T$ with
\begin{equation} \label{eq:VolT} \vol_T(\text{standard $n$-simplex}) \coloneqq 1,\end{equation}
where the standard $n$-simplex refers to the convex hull of $\mathbf{0}$ and the standard basis vectors in $\mathbb{R}^n$.  Then we have the following lemma. 

\begin{lemma}
\label{lem:volT}
For any maximal cone $\sigma_{\scC}$ of $\Sigma^\pi$ associated to a chain $\scC \in \mchain(T)$, the volume function $\vol_{\sigma_{\scC}, \ast}$ on $\Span_\R(\sigma_{\scC}) \subseteq \overline{\R}^T$ is the restriction of $\vol_T$.
\end{lemma}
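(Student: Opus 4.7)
The plan is to identify both $\vol_{\sigma_{\scC},\ast}$ and $\vol_T$ as the unique translation-invariant measure on $\overline{\R}^T$ assigning volume $1$ to the simplex
\[\Delta_T \coloneqq \conv\bigl( \{\mathbf{0}\} \cup \{\ove_i \mid i \in T\}\bigr).\]
For $\vol_T$ this is immediate from \eqref{eq:VolT}, since the isomorphism $\overline{\R}^T \cong \R^n$ supplied by the basis \eqref{eq:ONbasis} carries $\Delta_T$ to the standard $n$-simplex.

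The harder half is to check that $\vol_{\sigma_{\scC},\ast}(\Delta_T) = 1$ as well. For this, I would first compute the lattice $N_{\sigma_{\scC}} = N^\pi \cap \Span_\R(\sigma_{\scC})$. Writing $\scC = (S_1 \subsetneq \cdots \subsetneq S_n = T)$, the generators $\ove_{S_1},\dots,\ove_{S_n}$ of $\sigma_{\scC}$ are $n$ linearly independent vectors in $\overline{\R}^T$, so $\Span_\R(\sigma_{\scC}) = \overline{\R}^T$ for every $\scC \in \mchain(T)$. Because $T$ is $\pi$-colored, distinct elements of $T$ lie in distinct factors $\R^{E_j}/\R \e_{E_j}$ of $N^\pi_\R$, and a short check shows that $\ove_i$ generates the integral part of its factor along the line $\R \ove_i$. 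Hence $N_{\sigma_\scC} = \bigoplus_{i \in T} \Z \ove_i$, independently of the chain $\scC$.

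Next I would observe that $\{\ove_i \mid i \in T\}$ is orthonormal for $\ast$: since $\ast = \ast_1 \times \cdots \times \ast_n$ and distinct $i, j \in T$ live in different blocks $E_k$, the definition of $\ast$ yields $\ove_i \ast \ove_j = \delta_{ij}$. Consequently, the self-duality of $\overline{\R}^T$ induced by $\ast$ identifies $N_{\sigma_{\scC}}$ with its dual $M_{\sigma_\scC}$; equivalently, $M_{\sigma_{\scC}}$, viewed as a lattice in $\overline{\R}^T$ via $\ast$, is again $\bigoplus_{i \in T} \Z \ove_i$. In particular, $\Delta_T$ is $M_{\sigma_\scC}$-unimodular, so $\vol_{\sigma_\scC,\ast}(\Delta_T) = 1$ by definition, and the lemma follows.

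The argument is essentially an unpacking of definitions; the only point requiring care is pinning down the explicit form of $N_{\sigma_\scC}$, and this uses in an essential way that $T$ is $\pi$-colored. The same feature is what guarantees the orthonormality of $\{\ove_i \mid i \in T\}$ for $\ast$, and hence the coincidence of $N_{\sigma_\scC}$ and $M_{\sigma_\scC}$ inside $\overline{\R}^T$ that drives the calculation.
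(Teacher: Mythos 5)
Your proposal is correct and follows essentially the same route as the paper: both arguments rest on the observation that $\{\ove_i \mid i \in T\}$ is simultaneously an orthonormal basis of $\overline{\R}^T$ for $\ast$ and a $\Z$-basis of $N^\pi \cap \overline{\R}^T$, so the $\ast$-identification of $N^\pi_{\sigma_{\scC}}$ with $M^\pi_{\sigma_{\scC}}$ makes the standard simplex unimodular and forces the two volume normalizations to agree. Your extra verification that $N_{\sigma_{\scC}} = \bigoplus_{i \in T} \Z \ove_i$ simply spells out a step the paper asserts without proof.
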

\begin{proof}
Fix a maximal cone $\sigma_{\scC}$ as in the statement of the lemma.  Then the set \eqref{eq:ONbasis} is both an orthonormal basis of $\overline{\R}^T$ and a $\Z$-basis of $N^\pi \cap \overline{\R}^T$.  It follows that the isomorphism $N^\pi_\R \cong (N^\pi_\R)^{\vee}$ given by $\ast$ identifies the lattice $N^\pi_{\sigma_{\scC}}$ with the lattice $M^\pi_{\sigma_{\scC}}$.  Thus, under the isomorphism $\overline{\R}^T \cong \R^n$ provided by this basis, the standard $n$-simplex is unimodular with respect to $M^\pi_{\sigma_{\scC}} = N^\pi_{\sigma_{\scC}}$, and the lemma follows.
\end{proof}

Combining \cref{lem:volT} with the definition of volume in \eqref{eq:volNCD}, one sees that for any divisor $D$ on $\Sigma^\pi$, the volume of the normal complex $\NC_{\Sigma^\pi, \ast}(D)$ is given by
\begin{equation}
\label{eq:volpi}
\vol_{\pi} (C_{\Sigma^\pi, \ast}(D)) \coloneqq \sum_{T \in \decosetmax} \vol_T\left( \bigcup_{\scC \in \mchain(T)} P_{\sigma_\scC,\ast}(D)\right).
\end{equation}

Let us illustrate this normal complex and its volume in some examples.  We specifically consider cases where the divisor $D$ is $\sum_{S \in \decoset} h_S$, as this will play a key role in the proof of \cref{thm:h-integrals} below. 

\begin{example}
\label{ex:cubical}
    As in \cref{ex:SigmaB2}, let $E = \{{1},\bar{1}\} \sqcup \{{2},\bar{2}\}$ and consider the divisor $D = \sum_{S \in \decoset} h_S$.     A straightforward computation from the definition \eqref{eq:hS} of $h_S$ shows that $D$ can be expanded as
    \begin{equation}
        \label{eq:SigmaB2cubical}
        D = 3\left(x_{\{{1}\}} + x_{\{\bar{1}\}} + x_{\{{2}\}} +  x_{\{\bar{2}\}}\right) + 5 \left( x_{\{{1},{2}\}} + x_{\{\bar{1},{2}\}} + x_{\{{1},\bar{2}\}} + x_{\{\bar{1},\bar{2}\}}\right).
    \end{equation}
    The normal complex $\NC_{\Sigma^\pi, \ast}(D)$ is depicted in the leftmost part of Figure~\ref{fig:SigmaB2cubical}.  Note that it is bounded by hyperplanes normal to the eight rays, and that the normal hyperplanes to the rays of any maximal cone $\sigma_{\scC}$ meet in the interior of $\sigma_{\scC}$.  This shows that $D$ is cubical, and it illustrates the reason for the terminology: the cubical condition ensures that $P_{\sigma_{\scC}, \ast}(D)$ is combinatorially a cube.

    There are four choices of $T \in \decosetmax$ in this example, and the corresponding subspaces $\overline{\R}^T \subseteq N^\pi_\R$ are the four quadrants in Figure~\ref{fig:SigmaB2cubical}.  Each quadrant contains two polytopes $P_{\sigma_\scC, \ast}(D)$, corresponding to the two choices of $\scC \in \mchain(T)$.  The decomposition~\eqref{eq:volpi} then says that  $\vol_\pi\left(\NC_{\Sigma^\pi, \ast}(D)\right)$ is computed by assigning volume $1$ to the standard $n$-simplex within each quadrant. From the rightmost part of Figure~\ref{fig:SigmaB2cubical} we deduce that the volume of $\NC_{\Sigma^\pi, \ast}(D)$ is 68.
        \begin{figure}[h!]
        \centering
\begin{tikzpicture}[xscale=0.55, yscale=0.55]
\draw [ao(english),fill=ao(english),opacity=0.3] (2,3) -- (3,2) -- (3, -2) -- (2,-3) -- (-2,-3) -- (-3, -2) -- (-3, 2) -- (-2, 3)  -- (2,3); 

\draw [ao(english)]  (2,3) -- (3,2) -- (3, -2) -- (2,-3) -- (-2,-3) -- (-3, -2) -- (-3, 2) -- (-2, 3)  -- (2,3); 

\draw [opacity=0.2, ->] (0,0) --(3.6,0);
\draw [opacity=0.2, ->] (0,0) --(-3.6,0);
\draw [opacity=0.2, ->] (0,0) --(0,3.6);
\draw [opacity=0.2, ->] (0,0) --(0,-3.6);
\draw [opacity=0.2, ->] (0,0) --(3.3,3.3);
\draw [opacity=0.2, ->] (0,0) --(-3.3,3.3);
\draw [opacity=0.2, ->] (0,0) --(3.3,-3.3);
\draw [opacity=0.2, ->] (0,0) --(-3.3,-3.3);

\draw [->] (0,0) --(1,0);
\draw [->] (0,0) --(-1,0);
\draw [->] (0,0) --(0,1);
\draw [->] (0,0) --(0,-1);
\draw [->] (0,0) --(1,1)  node[above right] {$\ove_{\{\blu{1},\red{2}\}}$};
\draw [->] (0,0) --(-1,1)  node[above left] {$\ove_{\{\blub{1},\red{2}\}}$};
\draw [->] (0,0) --(1,-1) node[below right] {$\ove_{\{\blu{1},\redb{2}\}}$};
\draw [->] (0,0) --(-1,-1)  node[below left] {$\ove_{\{\blub{1},\redb{2}\}}$};

\node at (0, -1.8) {$\ove_{\redb{2}}$};
\node at (0, 1.8) {$\ove_{\red{2}}$};
\node at (1.8,0) {$\ove_{\blu{1}}$};
\node at (-1.8,0) {$\ove_{\blub{1}}$};

\begin{scope}[shift={(8.5,0)}]

\draw [ao(english),fill=ao(english),opacity=0.1] (3,0) -- (3, -2) -- (2,-3) -- (-2,-3) -- (-3, -2) -- (-3, 2) -- (-2, 3)  -- (2,3) -- (2.5,2.5) -- (0,0);

\filldraw [ao(english), opacity=0.3] (0,0) -- (3,0) -- (3,2) -- (2.5, 2.5) -- (0,0);

\draw [opacity=0.4, ->] (0,0) --(3.6,0);
\draw [ opacity=0.4, ->] (0,0) --(3.3,3.3);

\draw [blue, thick] (5-2.7,2.7) -- (3.2, 5-3.2);
\draw [magenta, thick] (3,-0.3) -- (3, 2.3);

\node [magenta] at (2,-0.4) {\small $x=3$};
\node [blue] at (0.8, 2.6) {\small $x+y=5$};
\filldraw [ao(english)] (3,2) circle[radius=0.6mm] node [right] {\small $(3,2)$};

\end{scope}
\begin{scope}[shift={(17,0)}]

\draw [ao(english),fill=ao(english),opacity=0.1] (3,2) -- (3, -2) -- (2,-3) -- (-2,-3) -- (-3, -2) -- (-3, 2) -- (-2, 3)  -- (2,3) -- (3,2); 

\draw [opacity=0.2] (-3,2) -- (3,2);
\draw [opacity=0.2] (-3,-2) -- (3,-2);
\draw [opacity=0.2] (-3,1) -- (3,1);
\draw [opacity=0.2] (-3,-1) -- (3,-1);
\draw [opacity=0.2] (-3,0) -- (3,0);

\draw [opacity=0.2] (2,-3) -- (2,3);
\draw [opacity=0.2] (-2,-3) -- (-2,3);
\draw [opacity=0.2] (1,-3) -- (1,3);
\draw [opacity=0.2] (-1,-3) -- (-1,3);
\draw [opacity=0.2] (0,3) -- (0,-3);

\draw [opacity=0.2] (1,3) -- (3,1);
\draw [opacity=0.2] (0,3) -- (3,0) -- (0,-3) -- (-3,0) -- (0,3);
\draw [opacity=0.2] (0,2) -- (2,0) -- (0,-2) -- (-2,0) -- (0,2);
\draw [opacity=0.2] (0,1) -- (1,0) -- (0,-1) -- (-1,0) -- (0,1);
\draw [opacity=0.2] (-1,3) -- (-3,1);
\draw [opacity=0.2] (-3,-1) -- (-1,-3);
\draw [opacity=0.2] (3,-1) -- (1,-3);

\draw [opacity=0.2] (2,3) -- (3,2) -- (3, -2) -- (2,-3) -- (-2,-3) -- (-3, -2) -- (-3, 2) -- (-2, 3)  -- (2,3); 
\end{scope}
\end{tikzpicture}
        \caption{On the left, the normal complex $\NC_{\Sigma^\pi, \ast}(D)$ from \cref{ex:cubical}.  In the middle, the polytope $P_{\sigma_{\scC},\ast}(D)$ and its bounding hyperplanes, where $\sigma_\scC$ is the maximal cone associated with the chain $\scC=\left(\{{1}\} \subseteq \{{1},{2}\}\right)$. On the right, $\NC_{\Sigma^\pi, \ast}(D)$ is subdivided into simplices, each of volume 1.}
        \label{fig:SigmaB2cubical}
    \end{figure}
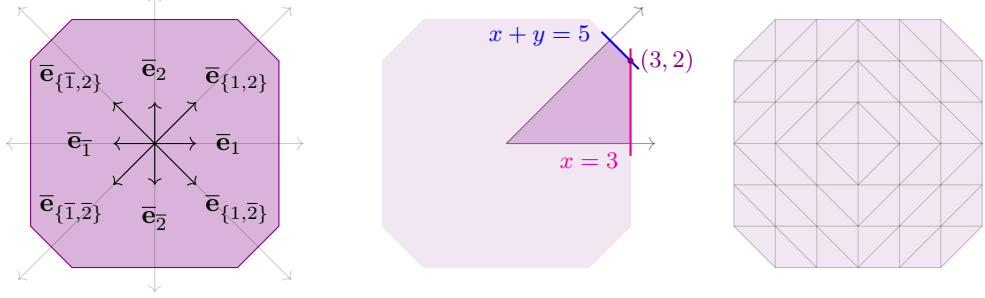
\end{example}

\begin{example} \label{ex:sumHnotcubical}
By contrast to the previous example, the divisor $D = \sum_{S \in \decoset} h_S$ is not necessarily cubical if the partition is not uniform.  For instance, as in \cref{ex:nonuniform}, let $E$ and its partition $\pi$ be defined by $E = \{a,b,c\} \sqcup \{1,2\}$.  Then the divisor $D =\sum_{S \in \decoset} h_S$ can be expanded as
\[
3 (x_{\{a\}} + x_{\{b\}} + x_{\{c\}}) + 4 (x_{\{1\}} + x_{\{2\}}) + 6 (x_{\{1,a\}}+ x_{\{1,b\}}+ x_{\{1,c\}} + x_{\{2,a\}} + x_{\{2,b\}} + x_{\{2,c\}}).
\]
This divisor is pseudo-cubical but not cubical: the normal hyperplanes to the rays of the cone $\sigma_{\scC}$ on the right-hand part of Figure~\ref{fig:pseudocubical} meet on the boundary of $\sigma_{\scC}$.

 \begin{figure}[h!]
    \centering
    \tdplotsetmaincoords{30}{35} 
\begin{tikzpicture}[tdplot_main_coords,scale=0.65]
 
    \draw[  ->] (0,0,0) -- (0,1,1);
	\draw[  ->] (0,0,0) -- (0,1,-1);
    \draw[  ->] (0,0,0) -- (0,1,0) node[above right] {$\ove_{\red{b}}$};
	\draw[  ->] (0,0,0) -- (0,0,1) node[above] {$\ove_{\blu{1}}$};
    \draw[  ->] (0,0,0) -- (1,0,-1);
	\draw[  ->] (0,0,0) -- (1,0,1);
    \draw[  ->] (0,0,0) -- (1,0,0) node[right] {$\ove_{\red{a}}$};
    \draw[  ->] (0,0,0) -- (0,0,-1) node[below] {$\ove_{\blu{2}}$};
    \draw[  ->] (0,0,0) -- (-1,-1,0) node[left] {$\ove_{\red{c}}$};
    \draw[  ->] (0,0,0) -- (-1,-1,1);
    \draw[  ->] (0,0,0) -- (-1,-1,-1);

    \draw[->,opacity=0.2] (0,0,0) -- (0,3.2,3.2);
	\draw[->,opacity=0.2] (0,0,0) -- (0,3.2,-3.2);
    \draw[->,opacity=0.2] (0,0,0) -- (0,3.2,0);
	\draw[->,opacity=0.2] (0,0,0) -- (0,0,4.2);
    \draw[->,opacity=0.2] (0,0,0) -- (3.2,0,-3.2);
	\draw[->,opacity=0.2] (0,0,0) -- (3.2,0,3.2);
    \draw[->,opacity=0.2] (0,0,0) -- (3.2,0,0);
    \draw[->,opacity=0.2] (0,0,0) -- (0,0,-4.2);
    \draw[->,opacity=0.2] (0,0,0) -- (-3.2,-3.2,0);
    \draw[->,opacity=0.2] (0,0,0) -- (-3.2,-3.2,3.2);
    \draw[->,opacity=0.2] (0,0,0) -- (-3.2,-3.2,-3.2);
    
    \draw [ao(english), opacity=0.3, fill=ao(english)](0,0,4) -- (2,0,4) -- (3,0,3) -- (3,0,-3) -- (2,0,-4) -- (0,0,-4) -- (0,0,4);
    \draw [ao(english), opacity=0.3, fill=ao(english)](0,0,4) -- (0,2,4) -- (0,3,3) -- (0,3,-3) -- (0,2,-4) -- (0,0,-4);
    \draw [ao(english), opacity=0.3, fill=ao(english)](0,0,4) -- (-2,-2,4) -- (-3,-3,3) -- (-3,-3,-3) -- (-2,-2,-4) -- (0,0,-4);

   \begin{scope}[shift={(7,4.9,0)}]
           
    \draw [ao(english), opacity=0.1, fill=ao(english)](0,0,4) -- (2,0,4) -- (3,0,3) -- (3,0,-3) -- (2,0,-4) -- (0,0,-4) -- (0,0,4);
    \draw [ao(english), opacity=0.1, fill=ao(english)](0,0,4) -- (0,2,4) -- (0,3,3) -- (0,3,-3) -- (0,2,-4) -- (0,0,-4);
    \draw [ao(english), opacity=0.1, fill=ao(english)](0,0,4) -- (-2,-2,4) -- (-3,-3,3) -- (-3,-3,-3) -- (-2,-2,-4) -- (0,0,-4);

    \draw [ao(english), opacity=0.3, fill=ao(english)](0,0,0) -- (3,0,3) -- (3,0,0) -- (0,0,0);

 	\draw[->,opacity=0.4] (0,0,0) -- (3.7,0,3.7);
	\draw[->,opacity=0.4] (0,0,0) -- (3.7,0,0);
    \draw [magenta] (3,0, 3.7) -- (3,0,-0.2);
    \draw [blue] (3+0.3,0,3-0.3) -- (3-0.6,0,3+0.6);
    \filldraw [ao(english)] (3,0,3) circle[radius=0.6mm];

      \end{scope}
    
  \end{tikzpicture}
  \caption{On the left, the normal complex $\NC_{\Sigma^\pi, \ast}(D)$ from \cref{ex:sumHnotcubical}. On the right, the polytope $P_{\sigma_{\scC},\ast}(D)$ and its bounding hyperplanes, where $\sigma_\scC$ is the maximal cone associated with the chain $\scC=\left(\{a\} \subseteq \{1,a\}\right)$. Note that the intersection of the bounding hyperplanes lies on the boundary of $\sigma_{\scC}$.}
    \label{fig:pseudocubical}
\end{figure}
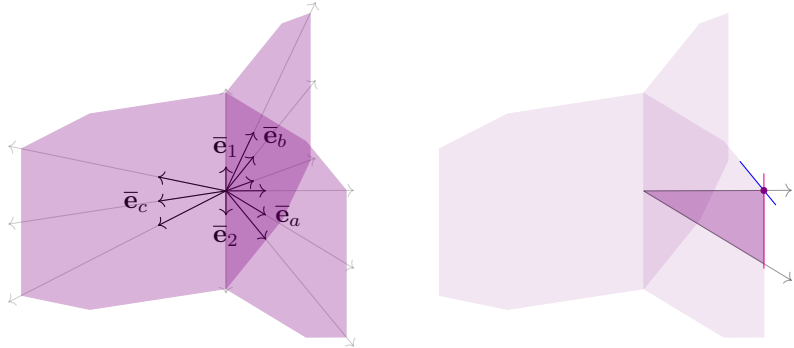
\end{example}

For later reference, we describe the pseudo-cubical condition for the $\pi$-colored fan explicitly.  To do so, we express a divisor $D \in A^1(\Sigma^\pi)_\R$ as a linear combination of the generators $x_S$ for $S \in \decoseto$, in which case we have the following lemma.

\begin{lemma}
\label{lem:DMpseudocubical} Let
\[D = \sum_{S \in \decoseto} \coeff(S) x_S \in A^1(\Sigma^\pi)_\R\]
be such that $\coeff(S)\geq 0$ for all $S$.  Then $D$ is pseudo-cubical if and only if, for any maximal chain $\scC=\big(S_1 \subsetneq \dots \subsetneq S_n \big)$ of nonempty colored sets and for every $i \in [n-1]$ and $j \in [n]$, the following conditions hold:
\begin{equation} \label{eq:coeff}
2\coeff(S_i) \geq \coeff(S_{i-1}) + \coeff(S_{i+1}) \qquad \text{and} \qquad \coeff(S_j) \geq \coeff(S_{j-1}),
\end{equation} where $c(S_0)=0$.  Furthermore, $D$ is cubical if and only if the inequalities are all strict. 
\end{lemma}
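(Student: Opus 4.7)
My strategy is, for each maximal chain $\scC = (S_1 \subsetneq \cdots \subsetneq S_n)$ of nonempty $\pi$-colored sets, to solve explicitly for the unique intersection point of the bounding hyperplanes of $P_{\sigma_\scC, \ast}(D)$ restricted to $\Span_\R(\sigma_\scC)$, and then read off the inequalities that place this point inside $\sigma_\scC$ or in its interior. Writing such a point as $m = \sum_k t_k \ove_{S_k}$, the bounding hyperplane conditions become the linear system $\sum_k t_k (\ove_{S_k} \ast \ove_{S_j}) = \coeff(S_j)$ for $j \in [n]$.

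The key input is that the Gram matrix $G_{kj} = \ove_{S_k} \ast \ove_{S_j}$ takes a particularly clean form on chains of colored sets: the factorization $\ast = \ast_1 \times \cdots \times \ast_n$ together with $\ove_a \ast_i \ove_a = 1$ for $a \in E_i$ gives $G_{kj} = |S_k \cap S_j| = \min(k, j)$ whenever $S_k \subseteq S_j$. The critical observation is that for nested colored sets, for each color $i$ the intersections $S_k \cap E_i$ and $S_j \cap E_i$ either coincide or are disjoint, so only the diagonal contributions $\ove_a \ast_i \ove_a = 1$ appear in the computation; the unspecified off-diagonal inner products $\ove_a \ast_i \ove_b$ for $a \neq b$ in the same $E_i$ are never invoked. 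To invert this system, I would set $T_j \coloneqq \sum_{k \geq j} t_k$ with $T_{n+1} \coloneqq 0$, reducing the equations by a telescoping argument to $\sum_{\ell \leq j} T_\ell = \coeff(S_j)$; thus $T_j = \coeff(S_j) - \coeff(S_{j-1})$ (under the convention $\coeff(S_0) \coloneqq 0$), yielding
\[
t_j = 2\coeff(S_j) - \coeff(S_{j-1}) - \coeff(S_{j+1}) \quad (j \in [n-1]), \qquad t_n = \coeff(S_n) - \coeff(S_{n-1}).
\]
The pseudo-cubical condition at $\sigma_\scC$ is then exactly $t_k \geq 0$ for all $k$, which is the conjunction of the stated concavity inequalities at interior indices and the top monotonicity $\coeff(S_n) \geq \coeff(S_{n-1})$; cubicality demands strict positivity, i.e., strict inequalities. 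The intermediate monotonicity $\coeff(S_j) \geq \coeff(S_{j-1})$ for $j < n$ appearing in the lemma's statement is a redundant consequence of concavity and the top case (a nonincreasing sequence whose last term is nonnegative has all terms nonnegative).

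Finally, to upgrade these maximal-cone conditions to the pseudo-cubical/cubical condition at every cone of $\Sigma^\pi$ as required by \cref{def:cubical}, I would run the same Gram-matrix analysis on an arbitrary subchain $\scC' \subseteq \scC$: non-negativity of the corresponding coefficients amounts to concavity and non-negativity of the averages of first differences of $\coeff$ over consecutive blocks of $\scC$. This step is the main subtlety, and it follows from the elementary observation that averages of a nonincreasing sequence over consecutive blocks are themselves nonincreasing—strictly so when every element of a block strictly exceeds every element of the next, which is guaranteed by the strict maximal-chain inequalities. Combined with the top inequality, this yields pseudo-cubicality (respectively cubicality) at $\sigma_{\scC'}$, completing the proof.
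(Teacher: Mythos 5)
Your computation at the maximal cones is essentially the paper's: both solve the unique intersection point of the normal hyperplanes inside $\Span_\R(\sigma_{\scC})$ (you in the ray-generator basis via the Gram matrix $\min(k,j)$, the paper in the orthonormal basis $\{\ove_{j_i}\}$), and both read off cone membership as nonincreasing, nonnegative first differences of $\coeff$ along the chain; your observation that the interior monotonicity inequalities are redundant is correct. Where you genuinely diverge is the reduction to lower-dimensional cones, which is the real content of the ``if'' direction since \cref{def:cubical} imposes a condition on every cone of $\Sigma^\pi$. The paper handles this by showing that the orthogonal projection of $\Span_\R(\sigma)$ onto $\Span_\R(\tau)$ carries $\sigma$ onto $\tau$ and $\sigma^\circ$ onto $\tau^\circ$ (an explicit, somewhat tedious nonnegativity computation), so the witness point for a maximal cone projects to a witness for each face. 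You instead re-solve the Gram system on an arbitrary chain $(T_1\subsetneq\cdots\subsetneq T_k)$ --- using that the Gram entries are $|T_{\min(a,b)}|$ --- to get the divided differences $\bigl(\coeff(T_b)-\coeff(T_{b-1})\bigr)/\bigl(|T_b|-|T_{b-1}|\bigr)$, and then check that these block averages of the first differences along a refining maximal chain are nonincreasing and nonnegative (strictly, in the cubical case). This is valid: every chain of nonempty colored sets refines to a maximal chain (a point you use implicitly and should state), the off-diagonal entries of $\ast_i$ indeed never enter for nested colored sets, and the block-average monotonicity argument is elementary and complete, including strictness. Your route trades the paper's projection lemma for an explicit characterization of (pseudo-)cubicality at every cone in terms of divided differences, which is arguably more self-contained; the paper's projection argument is more geometric and reusable (it shows the fan-level fact once, independent of the divisor), but both proofs are correct.
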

\begin{proof} We first show that, for the $\pi$-colored fan, it suffices to check the cubicality condition of \cref{def:cubical} on maximal cones.  This follows from the observation that, for any maximal cone $\sigma$ of $\Sigma^\pi$ and any face $\tau \subseteq \sigma$, the orthogonal projection of $\Span_{\R}(\sigma)$ onto $\Span_{\R}(\tau)$ takes $\sigma$ to $\tau$ and $\sigma^{\circ}$ to $\tau^{\circ}$.  This is tedious but straightforward to check: first, by relabelling elements of $E$, we can assume that $\sigma$ is the cone associated with the maximal chain $\scC=( \{1\} \subsetneq \{1,2\} \subsetneq \dots \subsetneq [n])$ and that $\tau$ is the cone associated to a chain 
$(S_1 \subsetneq \cdots \subsetneq S_k)$ which is refined by $\scC$. 
For every $i \in [k]$, we set $T_i \coloneqq S_i \setminus S_{i-1}$, which necessarily consists of consecutive integers.  Then, since
$\{\ove_{T_1}, \ldots \ove_{T_k}\}$ is an orthogonal basis of $\Span_{\R}(\tau)$, the orthogonal projection of $\Span_{\R}(\sigma)$ onto $\Span_{\R}(\tau)$ sends
\[\vec{x} = c_1 \ove_1 + c_2 \ove_{\{1,2\}} + \cdots + c_n \ove_{[n]} \in \sigma \]
to 
\[ \sum_{i=1}^k \frac{\vec{x} \ast \ove_{T_i}}{|T_i|} \ove_{T_i} = \sum_{i=1}^k \left(  \frac{1}{|T_i|} \sum_{j \in T_i} (c_j + c_{j+1} + \cdots + c_n) - \frac{1}{|T_{i+1}|}\sum_{j \in T_{i+1}} (c_j + \cdots + c_n) \right) \ove_{S_i}.\]
Writing $T_i= \{\ell+1, \ell+2, \ldots, \ell+a\}$ and $T_{i+1} = \{j+1, j+2, \ldots, j + b\}$ for $\ell+a  \leq j$, the coefficient of $\ove_{S_i}$ in the above summation is
\[
\sum_{x=1}^a \dfrac{x}{a}c_{\ell + x} + \sum_{y=\ell+a+1}^{j} c_y +  \sum_{z=1}^{b} \left(1-\dfrac{z}{b} \right) c_{j+z}\]
\iffalse \begin{align*}
    &\frac{1}{a}\sum_{\ell_1 \leq j < \ell_2} c_j + \frac{2}{a} \sum_{\ell_2 \leq j < \ell_3} c_j + \cdots + \frac{a}{a} \sum_{\ell_a \leq j < j_1} c_j\\
    & +  \left( 1 - \frac{1}{b}\right) \sum_{j_1 \leq j < j_2} c_j + \left( 1- \frac{2}{b} \right) \sum_{j_2 \leq j < j_3} c_j + \cdots + \left( 1 - \frac{b}{b}\right) \sum_{j_b \leq j \leq n} c_j.
\end{align*} \fi
This is manifestly non-negative whenever $c_j \geq 0$ for all $j$ and positive whenever $c_j > 0$ for all $j$.  Therefore, the orthogonal projection indeed sends $\sigma$ to $\tau$ and $\sigma^{\circ}$ to $\tau^{\circ}$.

Thus, to check that $D$ is (pseudo-)cubical, one must only check the condition of \cref{def:cubical} on each maximal cone of $\Sigma^\pi$.  These are of the form $\sigma_{\scC}$ for a maximal chain $\scC = (S_1 \subsetneq \cdots \subsetneq S_n)$, and the pseudo-cubical condition is that
\begin{equation}\label{eq:cubicalsetproof}
 \sigma_{\scC} \cap \left\lbrace \left. \vec{x} = \sum_{i \in S_n} x_i \ove_i \in \overline{\R}^{S_n} \; \right\vert \; \vec{x} \ast \ove_S = \coeff(S) \; \text{ for all } \; S \in \scC  \right\rbrace \ne \varnothing,
\end{equation}
while the cubical condition is that these intersections all lie within $\sigma_{
\scC}^{\circ}$.  Since the chain $\scC$ is maximal, for every $i\neq j \in S_n$ we have $\ove_i \ast \ove_j = \delta_{ij}$.  Thus, the $n$ conditions $\vec{x} \ast \ove_S = \coeff(S)$ for $S \in \scC$  yield the equations
   \[ \sum_{i \in S_k} x_{i}  = c(S_k) \qquad \text{ for all } k \in [n] \\
    \]
on the coordinates of $\vec{x}$.  These equations determine $\vec{x}$, meaning that \eqref{eq:cubicalsetproof} consists of a single point.  Namely, if we order the elements of $S_n = \{j_1, \ldots, j_n\}$ by the condition that $j_i \in T_i$ for each $i$, then we have
\[\vec{x} = \sum_{i=1}^n \left(c(S_i) - c(S_{i-1})\right)\ove_{j_i}.\]
This $\vec{x}$ belongs to 
\[\sigma_{\scC} = \cone\left(\ove_{j_1}, \;\ove_{j_1} + \ove_{j_2}, \; \ove_{j_1} + \ove_{j_2} + \ove_{j_3}, \ldots,\; \ove_{j_1} + \cdots + \ove_{j_n}\right)\]
if and only if  $x_{j_{i+1}} \leq x_{j_i}$ for every $i \in [n-1]$ and $x_{j} \geq 0$ for every $j \in[n]$, and it belongs to $\sigma_{\scC}^{\circ}$ if and only if these inequalities are all strict.  This completes the proof. 
\end{proof}

\begin{example}
    One can use \cref{lem:DMpseudocubical} to verify that the divisor $D$ from \cref{ex:cubical} is cubical.  For instance, for the chain $\scC = \left(\{{1}\} \subseteq \{{1},{2}\}\right)$, the relevant coefficients of $D$ satisfy
    \[2 \cdot c(\{{1}\}) = 2 \cdot 3 > 0+5 = c(\emptyset) + c(\{{1},{2}\}) \quad \text{ and } \quad c(\{1, 2\}) = 5 > 3 = c(\{1\}).\]
    On the other hand, one can also use \cref{lem:DMpseudocubical} to verify that the divisor $D$ from \cref{ex:sumHnotcubical} is pseudo-cubical but not cubical.  For instance, the chain  $\scC = \left( \{a\} \subseteq \{1,a\}\right)$ gives the equality $2 \cdot 3 = 0+6$. 
\end{example}

Equipped with the explicit descriptions of the pseudo-cubical condition and the volume function furnished by the previous two lemmas, we can restate the results of \cite{NR23} for the $\pi$-colored fan as follows.

\begin{corollary}\label{cor:NR23-application-Sr}
Let $D = \sum_{S \in \decoseto} c(S) x_S \in A^1(\Sigma^\pi)_\R$ be such that $c(S) \geq 0$ for all $S$.  Assume that $D$ is pseudo-cubical with respect to $\ast$, or equivalently, that the coefficients $c(S)$ satisfy the inequalities \eqref{eq:coeff} for all maximal chains $\scC$ of colored sets.  Then
\[
\int_{\Sigma^\pi} D^n = \vol_\pi\left( \NC_{\Sigma^\pi, \ast}(D)\right)
\]
in which $\vol_\pi$ is defined by \eqref{eq:volpi}.
\end{corollary}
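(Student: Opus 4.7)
The plan is to apply \cref{thm:NR23main} directly to the triple $(\Sigma^\pi, \ast, D)$. The hypotheses needed for the general theorem have already been established in the preceding portion of Section~\ref{sec:NCcolfan}: the fan $\Sigma^\pi$ is unimodular by inspection of \cref{def:Sigma} (every maximal cone has as generators $\{\ove_{S_1}, \ldots, \ove_{S_n}\}$ coming from a maximal chain, which is a $\Z$-basis of $N^\pi \cap \overline{\R}^T$), it is pure of dimension $n$, and it is balanced tropical by the verification of the balancing condition \eqref{eq:balancing}. The pseudo-cubicality of $D$ is assumed, and its equivalence to the coefficient inequalities \eqref{eq:coeff} is the content of \cref{lem:DMpseudocubical}.

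The only remaining step is to reconcile the two notions of volume, namely the general definition in \eqref{eq:volNCD} and the explicit $\vol_\pi$ in \eqref{eq:volpi}. To do so, I would first rewrite \eqref{eq:defNCD} by grouping the maximal cones $\sigma_\scC$ of $\Sigma^\pi$ according to the maximal element $T \in \decosetmax$ of the chain $\scC$, yielding the expression \eqref{eq:NCSigmapi}. Then I would invoke \cref{lem:volT}, which shows that for every $\scC \in \mchain(T)$ the volume function $\vol_{\sigma_\scC,\ast}$ is the restriction to $\Span_\R(\sigma_\scC)$ of the single volume function $\vol_T$ on $\overline{\R}^T$. Consequently, the polytopes $P_{\sigma_\scC,\ast}(D)$ for varying $\scC \in \mchain(T)$ can be measured uniformly by $\vol_T$, and summing over $T \in \decosetmax$ matches \eqref{eq:volpi} term by term with the general formula \eqref{eq:volNCD}.

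With these two translations in place, the equality $\int_{\Sigma^\pi} D^n = \vol_\pi(\NC_{\Sigma^\pi,\ast}(D))$ follows immediately from \cref{thm:NR23main}. There is no substantive obstacle here; the corollary is essentially a bookkeeping translation of the general Nathanson--Ross theorem into the combinatorial language of the $\pi$-colored fan, with all the nontrivial combinatorial input (the balancing condition, the explicit cubicality criterion, and the identification of volume functions) already isolated in the preceding lemmas.
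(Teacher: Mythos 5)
Your proposal is correct and follows the same route as the paper: the paper gives no separate proof, treating the corollary as an immediate application of \cref{thm:NR23main} once the earlier verifications (unimodularity, purity, balancing of $\Sigma^\pi$, the grouping \eqref{eq:NCSigmapi}, \cref{lem:volT} identifying $\vol_{\sigma_\scC,\ast}$ with $\vol_T$, and \cref{lem:DMpseudocubical} for the pseudo-cubical criterion) are in place, exactly as you describe.
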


In order to use \cref{cor:NR23-application-Sr} to prove \cref{thm:multimatroid_integral}, we must now specialize to the case $D = D_\bfM$ for a multimatroid $\bfM$.  For this, we turn in the next section to giving the requisite preliminary definitions on multimatroids.

\section{Multimatroids}
\label{sec:multimatroids}
We begin this section by introducing the concept of an $\R$-multimatroid, which can be seen as a generalization of the multimatroids introduced in \cite[Section 3]{bouchet1997multimatroids}. As explained in the introduction, the motivation for this extension is that it allows us to extend the equality of \cref{cor:NR23-application-Sr} to settings where the divisor $D= D_\bfM$ is not necessarily cubical.  Throughout, we assume that the data of $(E,\pi)$ is fixed.

\subsection{Definition of \texorpdfstring{$\R$}{R}-multimatroids}

First, we recall the definition of multimatroid from \cite{bouchet1997multimatroids}.

\begin{definition}
\label{def:multimatroid}
A \textbf{multimatroid} $\mathbf{M}$ on $(E, \pi)$ is a rank function $\rkk \colon \decoset \to \N$ satisfying the following conditions:
\begin{enumerate}[label=(BR\arabic*)]
    \item \label{it:BR1} $\rkk(\varnothing) = 0$;
    \item \label{it:BR2} (\textit{monotonicity and boundedness}) for any $S \in \decoset$ and any $x \in E_i$ such that $E_i \cap S = \varnothing$, \[\rkk(S) \le \rkk(S \cup \{x\}) \le \rkk(S) + 1;\]  
    \item \label{it:BR3} (\textit{submodularity}) for any $S, T \in \decoset$ with $S \cup T \in \decoset$, \[\rkk(S \cup T) + \rkk(S \cap T) \le \rkk(S) + \rkk(T);\] 
    \item \label{it:BR4} for any $S \in \decoset$ and any pair $\{x, y\} \subseteq E_i$ such that $E_i \cap S = \varnothing$, either $\rkk(S \cup \{x\}) - \rkk(S) = 1$ or $\rkk(S \cup \{y\}) - \rkk(S) = 1$. 
\end{enumerate}
If $(E, \pi)$ is uniform with $|E_i| = r$ for each $i$, then $\bfM$ is referred to as an {\bf $r$-matroid.}
\end{definition}

One can always define a multimatroid on $(E,\pi)$ by setting $\rkk(S) = |S|$ for all $S \in \decoset$; this is referred to as the {\bf Boolean multimatroid}.  For a somewhat more motivated class of examples---which, in fact, was one of the original reasons for the definition of multimatroids---one can consider the collection of vertex splitters of the medial graph of an embedded (or, more generally, $4$-regular) graph; this is the main subject of \cite{ellis2013graphs}.

The generalization of the concept of multimatroid that we require allows the rank function to be $\R$-valued and removes the boundedness condition from \ref{it:BR2} for the two reasons stated at the end of the introduction, so our notion might be called a weak $\R$-multimatroid in the language of \cite{BakerBowler}.  We also remove condition \ref{it:BR4}, since that condition is not needed for any of our results,\footnote{We refer to \cite[Remark, page 633]{bouchet1997multimatroids} for a comment about \ref{it:BR4}. 
 It is worth noting that, although Bouchet originally viewed the structure defined by \ref{it:BR1}--\ref{it:BR3} as ``too weak to be interesting,'' our results show that one can indeed prove interesting results without imposing condition \ref{it:BR4}.} so our notion might be called a weak poly-$\R$-multimatroid in the language of \cite{Edmonds}.  To avoid this proliferation of qualifiers, we simply refer to our concept as an $\R$-multimatroid, and we define it precisely as follows.

\begin{definition}
\label{def:multimatroid-rank}
    An \textbf{$\R$-multimatroid} $\bfM$ on $(E,\pi)$ is a {rank function} $\rkk \colon\decoset \to \R$ satisfying
    \begin{enumerate}[label=(R\arabic*)]
        \item \label{it:R1} $\rkk(\varnothing) = 0$;
        \item \label{it:R2} (\textit{monotonicity}) for any $S, T \in\decoset$ with $S \subseteq T$, one has $\rkk(S) \le \rkk(T)$;
        \item \label{it:R3} (\textit{submodularity}) for any $S, T \in \decoset$ with $S \cup T \in \decoset$, \[\rkk(S \cup T) + \rkk(S \cap T) \leq  \rkk(S) + \rkk(T).\]
    \end{enumerate}
   \end{definition}

\begin{example}\label{ex:M12bar12}
Let $E=\{{1}, \bar{1}\} \sqcup \{ {2}, \bar{2}\}$.  Then one can define an $\R$-multimatroid on $E$ by
\begin{align*}
    &\rkk(\emptyset)=0;\\
    &\rkk(\{{1}\})=\rkk(\{\bar{1}\})=\rkk(\{{2}\})=\rkk(\{\bar{2}\})=\rkk(\{\bar{1},{2}\})=\rkk(\{{1},\bar{2}\})= 1;\\
    &\rkk(\{{1},{2}\})=\rkk(\{\bar{1},\bar{2}\})=2.
\end{align*}
In fact, this example is a multimatroid, as one can check from \cref{def:multimatroid}.
\end{example}

\begin{example} 
\label{ex:Rmultimatroid}
    Let $E=\{1, \bar{1}\} \sqcup \{2, \bar{2}\}$.  Then one can define an $\R$-multimatroid on $E$ by
    \begin{align*}
        &\rkk(\emptyset) = 0;\\
        &\rkk(1) = \rkk(\bar{1}) = 5;\\
        &\rkk(2) = \rkk(\bar{2}) = 4;\\
        &\rkk(\{1,2\}) = \rkk(\{\bar{1}, 2\}) = \rkk(\{1, \bar{2}\}) = \rkk(\{\bar{1}, \bar{2}\}) = 6.
    \end{align*}
    This is not a multimatroid because it does not satisfy the boundedness condition in \ref{it:BR2}.
\end{example}

\begin{remark} \label{rem:Rmultimatroid}
One advantage of the additional constraints in \cref{def:multimatroid} is that they allow one to alternatively describe multimatroids via their associated collection of independent sets, bases, or circuits, in the same way that matroids are often described. 
 It would be interesting to determine whether $\R$-multimatroids can analogously be described via their independent sets, bases, or circuits, but we currently do not know of such a description.
\end{remark}

By identifying each $\R$-multimatroid with its rank function, one can define a topological space of $\R$-multimatroids as follows.

\begin{definition} \label{sec:scM} The \textbf{space of $\R$-multimatroids on $(E,\pi)$}, denoted $\scM=\scM_{(E,\pi)}$, is the subset of the set of functions $\decoset \rightarrow \R$ satisfying the conditions of \cref{def:multimatroid-rank}.  Identifying the function space with $\R^{\decoset}$ and giving it the usual Euclidean topology, we have an embedding
\[\scM \subseteq \R^{\decoseto}\]
in which $\scM$ is a closed, full-dimensional, connected subspace.  To see this, note that axiom~\ref{it:R1} of \cref{def:multimatroid-rank} ensures that $\scM \subseteq \R^{\decoseto}$.  Inequalities~\ref{it:R2} and \ref{it:R3} describe $\scM$ as an intersection of closed half-spaces, so $\scM$ is closed and convex (in particular, connected).  The fact that it is full-dimensional follows from \cref{lem:non-empty}.
\end{definition}

As mentioned in the introduction, a key feature of multimatroids is that their restriction to any colored set yields a matroid, so a multimatroid can in some sense be viewed as a way of patching together a collection of ordinary matroids.  More precisely, if $\rkk$ defines a multimatroid and $S \in \decoset$, then every subset of $S$ is also an element of $\decoset$, so one can define a rank function on the power set $\calP(S)$ by the restriction of $\rkk$, and it is straightforward to verify from conditions~\ref{it:BR1} -- \ref{it:BR3} that $\rkk|_{\calP(S)}$ defines a matroid on the ground set $S$ (the axiom~\ref{it:BR4} is irrelevant for this purpose).

A similar story holds for $\R$-multimatroids, but the restrictions are the following weakening of matroids; these are essentially the same as polymatroids \cite{Edmonds}, but with real-valued rather than integer-valued rank function.

\begin{definition} An {\bf $\R$-matroid} is a function $\rk \colon \calP(S) \to \R$ on the power set $\calP(S)$ satisfying
\begin{enumerate}[label=(M\arabic*)]
 \item \label{it:M1} $\rk(\emptyset)=0$;
 \item \label{it:M2} $\rk(X) \leq \rk(Y)$ whenever $X \subseteq Y$;
 \item \label{it:M3} for every $X,Y \in \calP(S)$, $\rk(X \cup Y) + \rk(X \cap Y) \leq \rk(X) + \rk(Y)$.
\end{enumerate}
\end{definition}

Comparing the conditions \ref{it:R1}, \ref{it:R2} and \ref{it:R3} of \cref{def:multimatroid-rank} with \ref{it:M1}, \ref{it:M2} and \ref{it:M3} above, one sees that the following definition indeed yields an $\R$-matroid.

\begin{definition}
    \label{lem:M(S)}  For an $\R$-multimatroid $\bfM$ and $S \in \decoset$, the {\bf restriction of $\bfM$ to $S$} is the $\R$-matroid $\bfM(S)$ given by restricting the rank function of $\bfM$ to subsets of $S$.
\end{definition}

\subsection{Independence polytopal complexes} \label{sec:IPC}

We now introduce the analogue for multimatroids and $\R$-multimatroids of the independence polytope of a matroid.  Carrying forth the philosophy that a multimatroid $\bfM$ is a way of patching together a collection of matroids $\bfM(S)$, we form the independence polytopal complex of $\bfM$ by patching together the independence polytopes of the matroids $\bfM(S)$.  This same idea holds for $\R$-multimatroids, but in this case the components $\bfM(S)$ are only $\R$-matroids, so care must be taken in how their independence polytopes are defined.

\begin{definition}
    The \textbf{independence polytope of an $\R$-matroid $\bfM(S)$} is the polytope
    \begin{equation} \label{eq:IP-ABD} \IP(\bfM(S)) \coloneqq  \left\lbrace \sum_{i \in S} x_i \ove_i  \in \overline{\R}^S_{\geq 0} \; \left\vert \; \sum_{i \in X} x_i \leq \rkk(X) \text{ for all } X \subseteq S \right.\right\rbrace \subseteq \overline{\R}^S_{\geq 0} \subseteq N^{\pi}_\R.
\end{equation}\end{definition}

\begin{remark} If $\bfM(S)$ is an honest matroid and not merely an $\R$-matroid, then one defines the independent sets of $\bfM(S)$ as those subsets $I \subseteq S$ such that $\rkk(I)=|I|$. In this situation,   \cite[equation (4)]{ABD} show that the independence polytope of $\bfM(S)$ is given by
        \[\IP(\bfM(S)) =  \conv \{ \ove_{I} \; | \; I \subseteq S \text{ an independent set} \},
    \] 
which explains its name.  However, when $\bfM(S)$ is $\R$-valued, there is not, to our knowledge, a good notion of independent sets (as discussed in \cref{rem:Rmultimatroid}), so we do not know of a convenient description of $\IP(\bfM(S))$ as a convex hull.
\end{remark}

Gluing the polytopes $\bfM(S)$ across all colored subsets $S \in \decoset$, one obtains the following.

\begin{definition}
\label{def:IPC} 
The \textbf{independence polytopal complex} of an $\R$-multimatroid $\bfM$ is the union
\[ \IPC(\bfM) \coloneqq  \bigcup_{S \in \decoset} \IP(\bfM(S)).
\]
\end{definition}

\begin{remark} \label{rmk:IPCM-pc}
    The fact that $\IPC(\bfM)$ forms a polytopal complex follows from the observation that, for any  $S_1, S_2 \in \decoset$ with $S_1 \cap S_2 \neq \emptyset$, one has
    \[\IP(\bfM(S_1)) \cap \IP(\bfM(S_2)) = \IP(\bfM(S_1 \cap S_2))\]
    by axiom~\ref{it:R2}, and therefore this intersection is a face of each of the two independence polytopes on the left-hand side.
    This furthermore shows that 
    \begin{equation}
        \label{eq:IPCmax}
    \IPC(\bfM) = \bigcup_{T \in \decosetmax} \IP(\bfM(T))
    \end{equation}
    since for every $S \subseteq T$ we have $\IP(\bfM(S)) \subseteq \IP(\bfM(T))$. 
\end{remark}

\begin{example}
\label{ex:IPC1}
For the multimatroid $\bfM$ as in \cref{ex:M12bar12}, $\IPC(\bfM)$ can be realized as the polytopal complex in $\R^2$ depicted in Figure~\ref{fig:IPC1}.\end{example}

\begin{figure}[h!]
\centering
\begin{tikzpicture}[xscale=1.8, yscale=1.8]
\draw [ao(english),fill=ao(english),opacity=0.3] (1,0) -- (1,1) -- (0,1) -- (-1,0) -- (-1,-1) -- (0,-1) -- (1,0);

\draw [ao(english), thick]
(0,0) -- (1,0)
(0,0) -- (-1,0)
(0,0) -- (0,1)
(0,0) -- (0,-1);
\begin{scope}
[shift={(-4,0)}]

\begin{scope}
[shift={(0.12, 0.12)}]
\draw [ao(english),fill=ao(english),opacity=0.3] (1,0) -- (1,1) -- (0,1) -- (0,0) -- (1,0);
\node [ao(english)!50!black] at (0.5,0.5) {\scriptsize $\IP(\bfM(\{1,2\}))$}; \end{scope}

\begin{scope}
[shift={(-0.12, -0.12)}]
\draw [ao(english),fill=ao(english),opacity=0.3] (-1,0) -- (-1,-1) -- (0,-1) -- (0,0) -- (-1,0);
\node [ao(english)!50!black] at (-.5,-.5) {\scriptsize $\IP(\bfM(\{\bar{1},\bar{2}\}))$};\end{scope}

\begin{scope}
[shift={(-0.12, 0.12)}]
\draw [ao(english),fill=ao(english),opacity=0.3] (-1,0) -- (0,0) -- (0,1) -- (-1,0);
\node [ao(english)!50!black] at (-.6,.5) {\scriptsize $\IP(\bfM(\{\bar{1},2\}))$};\end{scope}

\begin{scope}
[shift={(0.12, -0.12)}]
\draw [ao(english),fill=ao(english),opacity=0.3] (0,-1) -- (0,0) -- (1,0) -- (0,-1);
\node [ao(english)!50!black] at (.6,-.5) {\scriptsize $\IP(\bfM(\{1,\bar{2}\}))$};\end{scope}

\draw [ao(english), thick] 
(0.22,0) -- (1.22,0)
(-0.22,0) -- (-1.22,0)
(0,0.22) -- (0,1.22)
(0,-0.22) -- (0,-1.22);
\node [ao(english)] at (1.7,0) {\scriptsize $\IP(\bfM(\{1\}))$};
\node [ao(english)] at (-1.7,0) {\scriptsize $\IP(\bfM(\{\bar{1}\}))$};
\node [ao(english)]at (0,1.4) {\scriptsize $\IP(\bfM(\{2\}))$};
\node [ao(english)] at (0,-1.4) {\scriptsize $\IP(\bfM(\{\bar{2}\}))$};

\filldraw [ao(english)] (0,0) circle[radius=0.111mm];
\end{scope}
\end{tikzpicture}
\caption{The independence polytopal complex of the multimatroid $\bfM$ of \cref{ex:M12bar12}}.
\label{fig:IPC1}
\end{figure}

\begin{example}
\label{ex:IPC2}
For the $\R$-multimatroid $\bfM$ as in \cref{ex:Rmultimatroid}, the complex $\IPC(\bfM)$ is depicted in Figure~\ref{fig:IPC2}. \end{example}
\begin{figure}[h!]
\centering
\begin{tikzpicture}[xscale=0.4, yscale=0.4]
\draw [ao(english),fill=ao(english),opacity=0.3] (2,4) -- (5,1) -- (5,-1) -- (2,-4) -- (-2,-4) -- (-5,-1) -- (-5,1) -- (-2,4) -- (2,4);

\draw [ao(english), thick] (0,0) -- (5,0) (0,0) -- (-5,0) (0,0) -- (0,4) (0,0) -- (0,-4) ;
\begin{scope}
[shift={(-18,0)}]

\begin{scope}
[shift={(0.44, 0.44)}]\draw [ao(english),fill=ao(english),opacity=0.3] (2,4) -- (5,1) -- (5,0) -- (0,0) --(0,4) -- (2,4);
\node [ao(english)!50!black] at (2.5,1.2) {\scriptsize $\IP(\bfM(\{1,2\}))$}; \end{scope}

\begin{scope}
[shift={(-0.44, -0.44)}]\draw [ao(english),fill=ao(english),opacity=0.3] (-2,-4) -- (-5,-1) -- (-5,0) -- (0,0) --(0,-4) -- (-2,-4);
\node [ao(english)!50!black] at (-2.5,-1.2) {\scriptsize $\IP(\bfM(\{\bar{1},\bar{2}\}))$};\end{scope}

\begin{scope}
[shift={(-0.44, 0.44)}]\draw [ao(english),fill=ao(english),opacity=0.3] (-2,4) -- (-5,1) -- (-5,0) -- (0,0) --(0,4) -- (-2,4);
\node [ao(english)!50!black] at (-2.5,1.2) {\scriptsize $\IP(\bfM(\{\bar{1},2\}))$};\end{scope}

\begin{scope}
[shift={(0.44, -0.44)}]\draw [ao(english),fill=ao(english),opacity=0.3] (2,-4) -- (5,-1) -- (5,0) -- (0,0) --(0,-4) -- (2,-4);
\node [ao(english)!50!black] at (2.5,-1.2) {\scriptsize $\IP(\bfM(\{1,\bar{2}\}))$};\end{scope}

\draw [ao(english), thick] 
(0.87,0) -- (5.87,0)
(-0.87,0) -- (-5.87,0)
(0,0.87) -- (0,4.87)
(0,-0.87) -- (0,-4.87);
\node [ao(english)] at (7.8,0) {\scriptsize $\IP(\bfM(\{1\}))$};
\node [ao(english)] at (-7.8,0) {\scriptsize $\IP(\bfM(\{\bar{1}\}))$};
\node [ao(english)]at (0,5.5) {\scriptsize $\IP(\bfM(\{2\}))$};
\node [ao(english)] at (0,-5.5) {\scriptsize $\IP(\bfM(\{\bar{2}\}))$};

\filldraw [ao(english)] (0,0) circle[radius=0.5mm];
\end{scope}
\end{tikzpicture}
\caption{The independence polytopal complex of the multimatroid $\bfM$ of \cref{ex:Rmultimatroid}}.
\label{fig:IPC2}
\end{figure}

\begin{remark}
    \label{rem:M(T)polytope}
    Analogously to the observation in \cref{rem:M(T)fan} that $\Sigma^\pi$ can be viewed as a union of copies of the $n$-dimensional \annoyingfanname, with one copy associated to each $T \in \decosetmax$, equation \eqref{eq:IPCmax} shows that $\IPC(\bfM)$ can be viewed as a union of independence polytopes of matroids on size-$n$ ground sets, with one matroid associated to each $T \in \decosetmax$.  This parallelism is no accident: we will see in \cref{lem:C(DM)=IPC(M)} that, under a certain condition on $\bfM$, the polytopal complex $\IPC(\bfM)$ is a normal complex of $\Sigma^\pi$.
\end{remark}

In the same way that the volume of a normal complex $C_{\Sigma^\pi, \ast}(D)$ was given in \eqref{eq:volpi} as the sum over volumes of its components in each $\overline{\mathbb{R}}^T$, we define the volume of $\IPC(\bfM)$ as the sum
\begin{equation} \label{eq:volIPC}  \vol(\IPC(\bfM))  \coloneqq  \sum_{T \in \decosetmax} \vol_{T}( \IP(\bfM(T))),
\end{equation}
where the volume $\vol_T$ on $\overline{\R}^T$ is defined by \eqref{eq:VolT}.  One key property of this volume function that will play a crucial role below is the following.

\begin{lemma} \label{lem:volispoly} The volume function
\begin{align*}
    &\vol \colon \scM \to \R\\
    &\vol(\bfM) = \vol(\IPC(\bfM))
\end{align*}
is a polynomial function on $\scM \subseteq \R^{\decoset}$.
\end{lemma}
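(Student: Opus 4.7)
By \eqref{eq:volIPC} we have
\[
\vol(\bfM) \;=\; \sum_{T \in \decosetmax} \vol_T\bigl(\IP(\bfM(T))\bigr),
\]
and each summand depends only on $\rkk|_{\calP(T)}$. It therefore suffices to show that, for each fixed $T \in \decosetmax$ with $|T| = n$, the function $\rkk|_{\calP(T)} \mapsto \vol_T(\IP(\bfM(T)))$ agrees on the space of $\R$-matroids on $T$ with a polynomial in the values $\{\rkk(S) : S \subseteq T\}$.

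Fix such a $T$. The polytope $\IP(\bfM(T)) \subseteq \overline{\R}^T$ is a polymatroid polytope, and the plan is to express it as a signed Minkowski sum
\[
\IP(\bfM(T)) \;=\; \sum_{\emptyset \neq X \subseteq T} \beta(X)\, \Delta'_X,
\]
where $\Delta'_X \coloneqq \conv\bigl(\{\mathbf{0}\} \cup \{\ove_i : i \in X\}\bigr)$ is the standard simplex supported on $X$ and each coefficient $\beta(X) \in \R$ is a fixed $\Z$-linear combination of $\{\rkk(Y) : Y \subseteq X\}$ obtained via M\"obius inversion, analogous to the Ardila--Benedetti--Doker signed Minkowski decomposition of matroid polytopes extended to the polymatroid (i.e.\ $\R$-matroid) setting. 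By the multilinearity of normalized mixed volume on the cone of virtual polytopes, this yields
\[
\vol_T\bigl(\IP(\bfM(T))\bigr) \;=\; \sum_{(X_1, \ldots, X_n)} \beta(X_1) \cdots \beta(X_n)\, \operatorname{MV}_T(\Delta'_{X_1}, \ldots, \Delta'_{X_n}),
\]
where the sum runs over ordered $n$-tuples of nonempty subsets of $T$ and $\operatorname{MV}_T$ denotes the mixed-volume functional normalized with respect to $\vol_T$. Each mixed volume on the right is a fixed real constant independent of $\bfM$, and each $\beta(X_i)$ is a linear function of the rank values, so the resulting expression is a polynomial of degree $n$ in $\{\rkk(S) : S \subseteq T\}$. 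Summing over $T \in \decosetmax$ then gives the desired polynomial description of $\vol$ on $\scM$.

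The main technical obstacle is to justify that the signed mixed-volume formula computes the \emph{actual} Euclidean volume throughout the space of $\R$-matroids on $T$, not just on the subcone where all coefficients $\beta(X)$ happen to be nonnegative. The cleanest route I see is to verify the identity first on the open subcone where the decomposition is a genuine (positive) Minkowski sum --- on which multilinearity of volume is classical --- and then to extend by continuity, using that both sides of the claimed identity are continuous functions of $\rkk$ and that the space of $\R$-matroids on $T$ is closed, convex, and full-dimensional, hence equal to the closure of its interior. As an alternative to the Minkowski approach, one can appeal directly to the classical fact that the volume of a polytope defined by a fixed linear system $Ax \leq b$ is a continuous piecewise-polynomial function of $b$, and argue that the polymatroid axioms \ref{it:R2}--\ref{it:R3} confine $\rkk|_{\calP(T)}$ to the closure of a single chamber of this piecewise-polynomial structure.
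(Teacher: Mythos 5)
Your reduction to a single $T \in \decosetmax$ is the same first step as the paper's, but from there the arguments diverge. The paper's proof is toric: by \cite[Proposition 3.13]{EHL}, the monotone submodular functions with $\rkk(\emptyset)=0$ on $\calP(T)$ are exactly the nef divisor classes on the stellahedral toric variety, so $\IP(\bfM(T))$ is the polytope of a nef divisor and \cite[Theorem 13.4.4]{cox2011toric} identifies its volume with the degree of the top self-intersection, which is visibly a single polynomial in the rank values on the whole nef cone. Your route---an ABD-style signed Minkowski decomposition of $\IP(\bfM(T))$ into simplices plus multilinearity of mixed volumes---is genuinely different, and it can be made to work; indeed it is essentially the mechanism the paper uses later to prove \cref{cor:IPC(M)=Delta}, and combined with \cref{lem:HallRado} it even produces the polynomial explicitly. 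The problem is with the two ways you propose to justify the key step.

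Your first patch (verify the identity where all $\beta(X)\ge 0$ and ``extend by continuity'') does not close the gap: the locus where all coefficients are nonnegative is \emph{not} dense in the space of $\R$-matroids on $T$. For instance, for $|T|=3$ and the rank function of the uniform matroid of rank two, the coefficients in the normalization of \cref{cor:IPC(M)=Delta} are $a^T_{\{i\}}=0$, $a^T_{\{i,j\}}=1$, $a^T_T=-1$, and nearby interior points of the polymatroid cone behave the same way; continuity only propagates an identity to the closure of the set where it is known, and two continuous functions agreeing on an open set need not agree elsewhere---knowing that the left-hand side is polynomial (or analytic) is precisely what is being proven, so it cannot be invoked. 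Your second patch is closer to the paper's argument, but the claim that axioms \ref{it:M2}--\ref{it:M3} confine $\rkk|_{\calP(T)}$ to the closure of a single chamber of the piecewise-polynomial structure is exactly the nontrivial input, and it is left unargued; it is what the paper extracts from \cite[Proposition 3.13]{EHL}, the chamber being the nef cone of the stellahedral fan. The clean repair within your framework is to prove the decomposition at the level of polytopes for \emph{every} $\R$-matroid, i.e.
\[
\IP(\bfM(T)) + \sum_{\beta(X)<0}\abs{\beta(X)}\,\Delta'_X \;=\; \sum_{\beta(X)>0}\beta(X)\,\Delta'_X ,
\]
via support functions or the polymatroid extension of \cite[Proposition 4.3]{ABD}; once this polytope identity is in hand, the multilinear (signed) mixed-volume expansion of the volume is an unconditional algebraic fact about such decompositions, requiring no positivity of the coefficients and no continuity or chamber argument, and polynomiality in $\{\rkk(S)\}_{S\subseteq T}$ follows at once.
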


\begin{proof} It is enough to show that, for every $T \in \decosetmax$, the expression $\Vol_T(\IP(\bfM(T)))$ is a polynomial in $\{\rkk(S)\}_{\emptyset \neq S\subseteq T}$.
This follows by applying \cite[Theorem 13.4.4]{cox2011toric} to the {stellahedral fan}, for which the nef divisors correspond to the monotone submodular rank functions by \cite[ Proposition 3.13]{EHL}.
% To prove this polynomiality, note first from \eqref{eq:IP-ABD} that the coordinates of the vertices of $\IP(\bfM(T))$ are a priori piecewise polynomial functions in $\{\rkk_\bfM(S)\}_{\emptyset \neq S\subseteq T}$.  However, note furthermore that $\bfM(T)$ is a real-valued polymatroid, and by \cite[Theorem 3.11]{EHL} and \cite[equation (8)]{Edmonds}, the restriction of any real-valued submodular function to $T$ uniquely determines the independence polytope of an $\mathbb{R}$-matroid on the ground set $T$. In particular, if an independence polytope of $\IP(\bfM(T))$ were contained in the interior of the half-space given by one of the inequalities of a given rank function, the rank function would fail submodularity.  It follows that the coordinates of the vertices of $\IP(\bfM(T))$ are in fact polynomial---as opposed to merely piecewise polynomial---functions in $\{\rkk_\bfM(S)\}_{\emptyset \neq S\subseteq T}$.
% Since the standard Euclidean volume of a polytope whose normal fan coarsens the fan of a simplicial complete toric variety can be expressed as a polynomial in the coordinates of its vertices (see \cite[Chapter 5]{FultonToric} or {\cite[Section 13.4]{cox2011toric}}), and the function $\vol_T$ is a rescaling of this volume, the lemma is proved.
\end{proof}

\subsection{Divisor associated to a multimatroid} 

Having fully defined the objects appearing on the right-hand side of \cref{thm:B}, we now turn to the left-hand side, describing how an $\R$-multimatroid $\bfM$ defines a  divisor $D_{\bfM}$ on $\Sigma^\pi$. In particular, using the notation of \cref{sec:fan}, we set
\begin{equation}  \label{eq:DM}
D_{\bfM} \coloneqq  \sum_{S \in \decoset} \rkk(S) x_S.
\end{equation}
We say that an $\R$-multimatroid is {\bf pseudo-cubical} if $D_\bfM$ is a pseudo-cubical divisor on $\Sigma^\pi$ under the inner product described in \cref{sec:NCcolfan}.  By \cref{lem:DMpseudocubical}, this is equivalent to the condition that, for every maximal chain $\scC =(S_1 \subsetneq \dots \subsetneq S_n)$ of nonempty colored sets, we have
 \begin{equation}\label{eq:DMpseudo} 2 \rkk(S_i) \geq  \rkk(S_{i-1}) + \rkk(S_{i+1}) \qquad \text{ and } \qquad \rkk(S_{j}) \geq \rkk(S_{j-1})
\end{equation} for each $i \in [n-1]$ and $j \in [n]$, where $\rkk(S_0)=0$. 
 Similarly, an $\R$-multimatroid $\bfM$ is {\bf cubical} if $D_{\bfM}$ is a cubical divisor, which is equivalent to the condition that the inequalities \eqref{eq:DMpseudo} are all strict.

It is straightforward to check from the conditions \eqref{eq:DMpseudo} that the Boolean multimatroid is pseudo-cubical for any $(E,\pi)$.  The following examples illustrate the behavior of $D_{\bfM}$ in some somewhat more interesting cases.

\begin{example} \label{ex:M12bar12-DM} For $\bfM$ as in \cref{ex:M12bar12}, we have
\[ D_\bfM = 2 \left(x_{\{1,2\}} +  x_{\{\bar{1},\bar{2}\}} \right) + 1 \left( x_{\{1,\bar{2}\}} + x_{\{\bar{1},2\}} + x_{\{1\}} + x_{\{\bar{1}\}} + x_{\{2\}} + x_{\{\bar{2}\}} \right).
\]
This is a pseudo-cubical but not cubical multimatroid, since, for instance, the chain
\[\scC = (\{1\} \subseteq \{1,2\})\]
yields the equality $2 \cdot 1 = 0 + 2$.
\end{example}

\begin{example}
\label{ex:cubicalDM}
For $\bfM$ as in \cref{ex:Rmultimatroid}, we have
\[D_{\bfM} = 4\left(x_{\{1\}} +x_{\{\bar{1}\}} \right) +5\left(x_{\{2\}} +x_{\{\bar{2}\}} \right) + 6\left(x_{\{1,2\}} + x_{\{\bar{1}, 2\}} + x_{\{1, \bar{2}\}}  +x_{\{\bar{1}, \bar{2}\}} \right) .  \]
This is a cubical $\R$-multimatroid; for instance, the chain
\[\scC = (\{1\} \subseteq \{1,2\})\]
yields the inequality $2 \cdot 4 > 0 + 6$, and the chain 
\[\scC = (\{2\} \subseteq \{1,2\})\]
yields the inequality $2 \cdot 5 > 0 + 6$.
\end{example}

Interestingly, the pseudo-cubical condition on $D_{\bfM}$ in fact implies the $\R$-multimatroid axioms.  This observation is useful in what follows, so we prove it in the following lemma.

 \begin{lemma}
     \label{lem:PCimpliesRM}
     For any function $\rkk \colon \decoset \rightarrow \R$ such that $\rkk(\emptyset) = 0$, if the divisor $\sum_{S \in \decoset} \rkk(S) x_S$ is pseudo-cubical, then $\rkk$ defines an $\R$-multimatroid.
\end{lemma}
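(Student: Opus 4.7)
The plan is to verify the three axioms \ref{it:R1}, \ref{it:R2}, \ref{it:R3} of \cref{def:multimatroid-rank} under the hypothesis that $D = \sum_{S \in \decoset}\rkk(S)x_S$ is pseudo-cubical, which by \cref{lem:DMpseudocubical} amounts to the chain inequalities of \eqref{eq:DMpseudo}. Condition \ref{it:R1} is part of the hypothesis. For monotonicity \ref{it:R2}, given colored $S \subseteq T$, I would insert both into a maximal chain of $\decoseto$ (always possible by extending downward to a singleton and upward to size $n$ one element at a time) and telescope the one-step monotonicity inequalities $\rkk(S_j) \ge \rkk(S_{j-1})$ between successive sets of the chain; the boundary case $S = \emptyset$ uses the convention $\rkk(S_0) = 0 = \rkk(\emptyset)$.

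The main work is submodularity \ref{it:R3}, and the plan is a two-step reduction to a classical Boolean-lattice argument. The key intermediate statement is \emph{local} submodularity: for any colored $A$ and any $x, y$ in distinct color classes, neither of which lies in the color classes used by $A$,
\[\rkk(A \cup \{x, y\}) + \rkk(A) \le \rkk(A \cup \{x\}) + \rkk(A \cup \{y\}).\]
To prove this, I would extend the subchain $A \subsetneq A \cup \{x\} \subsetneq A \cup \{x, y\}$ to a maximal chain and apply the concavity inequality $2\rkk(S_i) \ge \rkk(S_{i-1}) + \rkk(S_{i+1})$ at the position of $A \cup \{x\}$, yielding $2\rkk(A \cup \{x\}) \ge \rkk(A) + \rkk(A \cup \{x, y\})$. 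The symmetric chain through $A \cup \{y\}$ gives $2\rkk(A \cup \{y\}) \ge \rkk(A) + \rkk(A \cup \{x, y\})$, and averaging the two inequalities delivers local submodularity. The boundary cases $A = \emptyset$ (where $\rkk(S_0) = 0$ enters) and $|A \cup \{x,y\}| = n$ (where the chain terminates) are handled without difficulty by the same inequalities.

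For general submodularity, given colored $S, T$ with $S \cup T$ colored, set $A = S \cap T$, $X = S \setminus T$, $Y = T \setminus S$. Since $S \cup T$ is colored, the elements of $A$, $X$, and $Y$ lie in pairwise distinct color classes, so for every subset $Z \subseteq X \cup Y$ the set $A \cup Z$ is colored. Restricting $\rkk$ to $\{A \cup Z : Z \subseteq X \cup Y\}$ therefore yields a well-defined function on the full Boolean lattice $2^{X \cup Y}$, and the local submodularity established above is precisely the two-element submodularity condition on this lattice. I would then invoke the classical fact that two-element submodularity on a Boolean lattice implies full submodularity (proved by an elementary induction via diminishing marginal returns), giving
\[\rkk(S) + \rkk(T) \;=\; \rkk(A \cup X) + \rkk(A \cup Y) \;\ge\; \rkk(A \cup X \cup Y) + \rkk(A) \;=\; \rkk(S \cup T) + \rkk(S \cap T).\]

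The main obstacle is the local submodularity step: the pseudo-cubical hypothesis is phrased as a one-dimensional second-difference inequality along maximal chains, and it is not immediately obvious that it can be bootstrapped into a genuinely two-dimensional submodularity statement. The averaging trick above resolves this by deriving two concavity inequalities along complementary chains and combining them; once this is in hand, both the monotonicity argument and the Boolean-lattice extension to full submodularity are routine.
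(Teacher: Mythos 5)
Your proposal is correct and follows essentially the same route as the paper's proof: the pseudo-cubical concavity inequality is applied along the two chains through $A\cup\{x\}$ and $A\cup\{y\}$ and the results are combined to get local submodularity (the paper's inequality \eqref{eq:idea}), after which the standard Boolean-lattice fact that single-element submodularity implies full submodularity (cited there as \cite[Theorem 44.1]{Schrijver}, with the grid-summing proof written out) finishes the argument, with monotonicity read off directly from the second chain inequality.
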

\begin{proof}
    The second inequality of \eqref{eq:DMpseudo} clearly implies the monotonicity axiom~\ref{it:R2}. We now show that the submodularity axiom~\ref{it:R3} is implied by the first inequality of \eqref{eq:DMpseudo}. 
    
    For every $I \in \decoset$, and for every $x,y \in E\setminus I$ such that $I \cup \{x,y\} \in \decoset$, applying \eqref{eq:DMpseudo} gives
    \[ 2 \rkk(I \cup \{x\}) \geq \rkk(I) + \rkk(I \cup \{x,y\}) \qquad \text{and} \qquad 2 \rkk(I \cup \{y\}) \geq \rkk(I) + \rkk(I \cup \{x,y\}),
    \]
    so we obtain
    \begin{equation} \label{eq:idea} \rkk(I \cup \{x\}) + \rkk(I \cup \{y\}) \geq \rkk(I) + \rkk(I \cup \{x,y\}).
    \end{equation} This shows that axiom~\ref{it:R3} holds when $|S \cup T|=|S|+1=|T|+1$, by setting $I = S \cap T$.
    It is known that the validity of \ref{it:R3} in such cases implies its validity in full generality \cite[Theorem 44.1]{Schrijver}.  For the sake of self-containment, we include a proof below.
    
    Again setting $I = S\cap T$, denote $S=I \cup \{s_1, \dots, s_k\}$ and $T=I \cup \{t_1, \dots, t_\ell\}$. Then, for every $0 \leq a \leq k$ and $0 \leq b \leq \ell$, define the set
    \[ X_{a,b} \coloneqq I \cup \{s_1,\dots, s_a, t_1, \dots, t_b\} \in \decoset,
    \]
    where $X_{k,0} \coloneqq S$, $X_{0,\ell} \coloneqq T$, and $X_{0,0} \coloneqq I$. Then \eqref{eq:idea} yields
\[ \rkk(X_{a,b-1}) + \rkk(X_{a-1,b}) \geq  \rkk(X_{a-1,b-1})+\rkk(X_{a,b})
\]
for all $a \in [k]$ and $b \in [\ell]$.  Taking the sum over all such $a$ and $b$, we obtain the inequality
\[ \rkk(S) + \rkk(T) = \rkk(X_{k,0}) + \rkk(X_{0,\ell}) \geq \rkk(X_{0,0}) +\rkk(X_{k,\ell})= \rkk(S \cap T)+\rkk(S \cup T),
\] which shows that \ref{it:R3} holds and thus concludes the proof.
\end{proof}

Now, in the topological space $\scM$ of $\R$-multimatroids defined in \cref{sec:scM}, define the subset
\begin{align*}
\scMc &\coloneqq  \left\lbrace\bfM \in \scM \; | \; \bfM \text{ is cubical}\right\rbrace.
\end{align*}
Then we have the following key properties.

\begin{lemma}\label{lem:non-empty}  \label{cor:cubical-open} The space $\scMc$ is a nonempty, open subset of $\R^{\decoseto}$.  In particular, $\scM$ has nonempty interior and is therefore a full-dimensional subset of $\R^{\decoseto}$.\end{lemma}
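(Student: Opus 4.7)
The plan is to identify $\scMc$ with the locus in $\R^{\decoseto}$ cut out by a finite collection of strict linear inequalities, and then to exhibit one explicit element of it. The key inputs are two earlier results in the paper: \cref{lem:DMpseudocubical} translates cubicality of $D_\bfM$ into the strict inequalities
\[ 2\rkk(S_i) > \rkk(S_{i-1}) + \rkk(S_{i+1}) \qquad \text{and} \qquad \rkk(S_j) > \rkk(S_{j-1}) \]
along every maximal chain $S_1 \subsetneq \cdots \subsetneq S_n$ of nonempty colored sets (with the convention $\rkk(S_0) \coloneqq 0$), and \cref{lem:PCimpliesRM} shows that any function $\rkk\colon \decoset \to \R$ with $\rkk(\emptyset)=0$ satisfying the nonstrict versions of these inequalities is automatically an $\R$-multimatroid. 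Combining these, $\scMc$ is the intersection of finitely many open half-spaces in $\R^{\decoseto}$, so openness is immediate.

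For nonemptyness, the idea is to look for a rank function depending only on the cardinality of the colored set. Concretely, I would set $\rkk(S) \coloneqq f(|S|)$ for some strictly increasing, strictly concave function $f\colon \{0, 1, \ldots, n\} \to \R$ with $f(0) = 0$; an explicit choice such as $f(k) = k(2n+1-k)$ works, since its first differences $f(k)-f(k-1) = 2(n+1-k)$ are positive for $k \in [n]$ and its second differences equal $-2 < 0$. Along any maximal chain one has $|S_i| = i$, so both strict cubical inequalities follow immediately from these first- and second-difference bounds, showing that $\bfM \in \scMc$.

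Once openness and nonemptyness are in hand, the ``full-dimensional'' claim for $\scM$ is automatic: $\scMc$ is a nonempty open subset of $\R^{\decoseto}$ contained in $\scM$, so $\scM$ has nonempty interior, which (together with convexity of $\scM$, already noted in \cref{sec:scM}) shows that $\scM$ is full-dimensional in $\R^{\decoseto}$. I do not foresee a serious obstacle in this argument; the only mildly nontrivial step is producing a sequence with the required strict concavity and strict monotonicity, but this becomes transparent once one observes that the cubicality conditions reduce to properties of $(f(0), f(1), \ldots, f(n))$ when $\rkk$ depends only on cardinality.
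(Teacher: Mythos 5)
Your proof is correct and follows essentially the same route as the paper: openness comes from describing $\scMc$ by the strict inequalities of \cref{lem:DMpseudocubical} (with \cref{lem:PCimpliesRM} guaranteeing the $\R$-multimatroid axioms), and nonemptyness from an explicit rank function depending only on $|S|$ that is strictly increasing and strictly concave. Indeed, your choice $f(k)=k(2n+1-k)$ is exactly twice the paper's example $\rkk(S)=\binom{n+1}{2}-\binom{n+1-|S|}{2}$, so the two constructions coincide up to rescaling.
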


\proof The conditions \eqref{eq:DMpseudo} with strict inequalities manifestly define $\scMc$ as an open subset of $\R^{\decoseto}$. We are left to show that $\scMc$ is nonempty. To do so, we define a specific cubical $\R$-multimatroid $\bfM$ by setting
\begin{equation}
    \label{eq:PCRmultimatroid}
{\rkk}(S) \coloneqq  \binom{n+1}{2} - \binom{n+1-|S|}{2}
\end{equation}
for each $S \in \decoset$.  It is straightforward to see that the inequalities \eqref{eq:DMpseudo} hold and therefore $\bfM$ is cubical.  By \cref{lem:PCimpliesRM}, it follows that $\bfM$ is an $\R$-multimatroid, so $\bfM \in \scMc$.
%assuming that it is indeed an $\R$-multimatroid at all.  To see this, note first that the conditions \ref{it:R1} and \ref{it:R2} in \cref{def:multimatroid-rank} trivially hold. To finish the proof, we observe that the function $\rkk \colon x \mapsto \binom{n+1}{2} -\binom{n+1-x}{2}$ is strictly concave, so for integers $c \leq a,b \leq d$ such that $a+b=c+d$ one has $ \rkk(c)+\rkk(d) \leq \rkk(a) +\rkk(b)$. Setting  $a=|S|$, $b=|T|$, $c=|S \cap T|$, and $d=|S \cup T|$, one concludes that also \ref{it:R3} holds.
\endproof

\begin{remark} 
The fact that \eqref{eq:PCRmultimatroid} defines an $\R$-multimatroid and not an ordinary multimatroid is one of the key reasons why we require the generalization from multimatroids to $\R$-multimatroids in this work.  In fact, if $n \geq 3$, then no $(E,\pi)$ can admit a cubical multimatroid.  To see this, let $\bfM$ be a multimatroid on $(E,\pi)$ with $n \geq 3$, so that, for every maximal chain $\scC = (S_1 \subsetneq \dots \subsetneq S_n)$ in $\decoseto$, one has $\rkk(S_i) \in \N$ and $\rkk(S_{i}) \leq \rkk(S_{i+1}) \leq \rkk(S_i)+1$ for every $i \in [n-1]$. The inequalities \eqref{eq:DMpseudo} show that $\bfM$ can only be cubical if
\[ \text{(i)} \quad  2\rkk(S_1) > \rkk(S_2) \qquad \text{and} \qquad  \text{(ii)} \quad 2\rkk(S_2)  > \rkk(S_1) + \rkk(S_3).\]
Condition (i) implies that $\rkk(S_1)\neq 0$ and so necessarily $\rkk(S_1)=1$. This implies that $\rkk(S_2)=1$ and so $\rkk(S_3) \in \{1,2\}$. None of these options is compatible with condition (ii), and so $\bfM$ cannot be cubical.
\end{remark}

At this point, we have all the ingredients necessary to prove \cref{thm:B}, so we turn in the next section to its proof and then, in turn, to the deduction of \cref{thm:h-integrals}.

\section{Proofs of main theorems}
\label{sec:proofs}

While \cref{thm:B} was stated in the introduction as a statement about multimatroids, we in fact prove it for all $\R$-multimatroids.  The statement is as follows.

\begin{thm}\label{thm:multimatroid_integral}   For any $\R$-multimatroid $\bfM$ on $(E,\pi)$,
\begin{equation} 
\label{eq:thm51}\int_{\Sigma^\pi} \left( D_\bfM\right)^n = \vol(\IPC(\bfM)). \end{equation}
\end{thm}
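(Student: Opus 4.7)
The plan is to show that both sides of~\eqref{eq:thm51} extend to polynomial functions on $\R^{\decoseto}$, to verify their equality on the nonempty open subset $\scMc$ via \cref{cor:NR23-application-Sr}, and then to invoke the fact that two polynomials on $\R^N$ agreeing on a nonempty Euclidean-open set must coincide. This neatly sidesteps the difficulty that, for a general $\R$-multimatroid $\bfM$, the divisor $D_\bfM$ need not be pseudo-cubical, so \cref{cor:NR23-application-Sr} cannot be applied directly.

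First I would verify polynomiality of both sides in the values $\{\rkk(S)\}_{S \in \decoseto}$. The left-hand side is manifestly a degree-$n$ polynomial, since $D_\bfM = \sum_S \rkk(S) x_S$ is linear in these coefficients and the degree map $\int_{\Sigma^\pi}$ is linear. For the right-hand side, the decomposition~\eqref{eq:IPCmax} together with the definition $\vol(\IPC(\bfM)) = \sum_T \vol_T(\IP(\bfM(T)))$ and \cref{lem:volispoly} gives that $\vol(\IPC(\bfM))$ extends to a polynomial function on $\R^{\decoseto}$ of degree $n$.

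The core step is to identify, for cubical $\bfM$, the normal complex with the independence polytopal complex, namely $\NC_{\Sigma^\pi,\ast}(D_\bfM) = \IPC(\bfM)$, with matching volumes. Using the decompositions~\eqref{eq:NCSigmapi} and~\eqref{eq:IPCmax}, this reduces to showing, for each $T \in \decosetmax$ and chain $\scC = (S_1 \subsetneq \cdots \subsetneq S_n) \in \mchain(T)$ with $T = \{t_1,\dots,t_n\}$ chosen so that $S_k = \{t_1, \ldots, t_k\}$, the identity
\[
\IP(\bfM(T)) \cap \sigma_\scC \;=\; P_{\sigma_\scC, \ast}(D_\bfM).
\]
Unpacking the inner product via the orthonormal basis $\{\ove_{t_i}\}$ of $\overline{\R}^T$, the right-hand side is cut out inside the chamber $\sigma_\scC = \{x_{t_1} \geq \cdots \geq x_{t_n} \geq 0\}$ by the chain inequalities $\sum_{i=1}^k x_{t_i} \leq \rkk(S_k)$. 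The containment $\subseteq$ is immediate, while the reverse is the classical polymatroid chamber decomposition: inside this descending chamber, the chain inequalities already imply $\sum_{i \in X} x_i \leq \rkk(X)$ for every $X \subseteq T$, which follows from submodularity (axiom~\ref{it:R3}) of the $\R$-matroid $\bfM(T)$. The matching of volumes then follows from \cref{lem:volT} and~\eqref{eq:VolT}, and combining this identification with \cref{cor:NR23-application-Sr} proves~\eqref{eq:thm51} for every $\bfM \in \scMc$.

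Finally, since $\scMc$ is a nonempty open subset of $\R^{\decoseto}$ by \cref{cor:cubical-open}, and polynomial functions on $\R^{\decoseto}$ agreeing on a nonempty open set must be equal, the equality extends to all of $\scM$. The hard part will be the polymatroid chamber decomposition used in the third paragraph: one must verify that the chain inequalities from the normal-complex construction (with the nonstandard inner product $\ast$ and the chain data of $\scC$) coincide inside $\sigma_\scC$ with the chamber restriction of all polymatroid inequalities defining $\IP(\bfM(T))$. Once this geometric identification is in hand, the polynomial-extension argument makes the rest automatic.
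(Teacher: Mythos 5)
Your overall strategy---polynomiality of both sides in the parameters $\{\rkk(S)\}$, verification of the identity on the cubical locus via \cref{cor:NR23-application-Sr}, and extension to all of $\scM$ by the identity theorem for polynomials using \cref{cor:cubical-open}---is exactly the paper's. The gap is in the step you yourself flag as the hard part: the claim that, inside the chamber $\sigma_{\scC}$, the chain inequalities $\sum_{i=1}^{k} x_{t_i} \le \rkk(S_k)$ already imply every inequality $\sum_{i \in X} x_i \le \rkk(X)$ for $X \subseteq T$, and that this is a ``classical polymatroid chamber decomposition'' following from submodularity (axiom~\ref{it:R3}). That implication is false for general monotone submodular rank functions. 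The paper's own remark after \cref{lem:(2.9)} gives a counterexample: on $E = \{1\} \sqcup \{2\}$ with $\rkk(\{1\}) = 2$, $\rkk(\{2\}) = 1$, $\rkk(\{1,2\}) = 3$ (a genuine $\R$-multimatroid), the point with $x_1 = 1.8$, $x_2 = 1.2$ lies in the chamber $x_1 \ge x_2 \ge 0$ and satisfies the chain inequalities $x_1 \le 2$ and $x_1 + x_2 \le 3$, yet violates $x_2 \le \rkk(\{2\}) = 1$; hence $P_{\sigma_{\scC},\ast}(D_\bfM) \not\subseteq \IP(\bfM(T))$ and $\NC_{\Sigma^\pi,\ast}(D_\bfM) \neq \IPC(\bfM)$ there. (Even an honest matroid with a loop, $\rkk(\{1\})=0$, $\rkk(\{2\})=\rkk(\{1,2\})=1$, fails the same way in the chamber through $\{2\}$.) So submodularity of $\bfM(T)$ alone cannot deliver the reverse containment.

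What actually makes the identification $P_{\sigma_{\scC},\ast}(D_\bfM) = \IP(\bfM(T)) \cap \sigma_{\scC}$ work is the (pseudo-)cubicality hypothesis, and it must be used globally: the inequalities $2\rkk(S) \ge \rkk(S\setminus\{b_S\}) + \rkk(S\cup\{a_S\})$ come from maximal chains through $g(S) \subsetneq S \subsetneq f(S)$ that are in general different from the chain $\scC$ defining the cone. This is precisely the content of the paper's \cref{lem:(2.9)}, proved by a double descending induction using $f(S) = S \cup \{a_S\}$, $g(S) = S \setminus \{b_S\}$ and the chamber inequality $c_{a_S} \ge c_{b_S}$. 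Since you only need the identification for cubical (hence pseudo-cubical) $\bfM$, the statement you want is true, but your justification must be replaced by an argument of this kind. The remainder of your proposal---polynomiality of $\vol(\IPC(\bfM))$ via \cref{lem:volispoly}, matching of volume normalizations via \cref{lem:volT} and \eqref{eq:VolT}, and the nonempty-open $\scMc$ extension argument---matches the paper and is fine.
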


\subsection{Proof of Theorem~\ref{thm:multimatroid_integral}}

When $\bfM$ is pseudo-cubical, \cref{thm:multimatroid_integral} can be deduced from what we have already done, so we begin with this case.

\begin{lemma}
\label{lem:C(DM)=IPC(M)}
Let $\bfM$ be a pseudo-cubical $\R$-multimatroid.  Then 
\begin{equation} \label{eq:NC=IPC} \NC_{\Sigma^\pi, \ast}(D_\bfM) = \IPC(\bfM),
\end{equation}
and furthermore, their volumes agree in the sense that
\begin{equation} \label{eq:VolC=VolIPC}
    \vol_{\pi}(\NC_{\Sigma^\pi, \ast}(D_\bfM)) = \vol(\IPC(\bfM)),
\end{equation} 
where the left-hand side is defined by \eqref{eq:volpi} and the right-hand side by \eqref{eq:volIPC}.  In particular, combining \eqref{eq:VolC=VolIPC} with \cref{cor:NR23-application-Sr}, it follows that \cref{thm:multimatroid_integral} holds when $\bfM$ is pseudo-cubical.
\end{lemma}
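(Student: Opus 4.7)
The plan is to reduce \eqref{eq:NC=IPC} to a polytope comparison carried out one $T \in \decosetmax$ at a time. By \eqref{eq:NCSigmapi} and \eqref{eq:IPCmax} respectively, both sides decompose as unions indexed by $T \in \decosetmax$, with each $T$-piece lying inside $\overline{\R}^T \subseteq N^\pi_\R$. Since $\overline{\R}^T_{\geq 0} = \bigcup_{\scC \in \mchain(T)} \sigma_\scC$ contains $\IP(\bfM(T))$, the equality \eqref{eq:NC=IPC} reduces to the polytope identity $P_{\sigma_\scC, \ast}(D_\bfM) = \IP(\bfM(T)) \cap \sigma_\scC$ for every $T \in \decosetmax$ and every $\scC \in \mchain(T)$. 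The inclusion $\supseteq$ is immediate, since $\IP(\bfM(T))$ enforces $\sum_{i \in X} m_i \leq \rkk(X)$ for all $X \subseteq T$, and in particular for the chain-caps $X = S \in \scC$.

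The heart of the proof is the reverse inclusion $P_{\sigma_\scC, \ast}(D_\bfM) \subseteq \IP(\bfM(T))$: one must show that if $m \in \sigma_\scC$ satisfies $m(S_k) \leq \rkk(S_k)$ only for the chain-caps coming from $\scC$, then in fact $m(X) \leq \rkk(X)$ for every colored $X \subseteq T$. By convexity, it suffices to check this at each vertex of $P_{\sigma_\scC, \ast}(D_\bfM)$. Ordering $T = \{t_1, \ldots, t_n\}$ so that $S_k = \{t_1, \ldots, t_k\}$ along $\scC$, the vertices are naturally indexed by the subsets $A = \{a_1 < \cdots < a_r\} \subseteq [n]$ of tight chain-caps: the corresponding vertex $v_A$ takes the value $v_A(t_k) = (\rkk(S_{a_p}) - \rkk(S_{a_{p-1}}))/(a_p - a_{p-1})$ for $k \in (a_{p-1}, a_p]$ (with $a_0 = 0$) and $v_A(t_k) = 0$ for $k > a_r$. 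Pseudo-cubicality along $\scC$ guarantees that the block-average increments are nonincreasing in $p$, so that $v_A$ indeed lies in $\sigma_\scC$.

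The main obstacle is verifying $v_A(X) \leq \rkk(X)$ for arbitrary $X \subseteq T$. Pseudo-cubicality along $\scC$ alone is not enough for this (small two-element examples already produce extra points in $P_{\sigma_\scC, \ast}(D_\bfM)$ outside $\IP(\bfM(T))$ when the condition fails on another chain), so the proof must exploit pseudo-cubicality along other maximal chains in $\decoset$. The strategy is, for each $X$, to fix a maximal chain $\scC_X = (Y_1 \subsetneq \cdots \subsetneq Y_n)$ passing through $X = Y_{|X|}$; pseudo-cubicality along $\scC_X$ forces the sequence $\rkk(Y_k)$ to be monotone and concave, yielding lower bounds on $\rkk(X)$ (for instance $\rkk(X)/|X| \geq \rkk(Y_k)/k$ for $k \geq |X|$). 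Combining such lower bounds with the explicit block-average formula for $v_A(X)$ produces the desired inequality; the technical bookkeeping lies in choosing $\scC_X$ compatibly with the block structure of $A$ and handling degeneracies in the non-strict (merely pseudo-cubical) case.

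Once the polytope identity is in hand, \eqref{eq:NC=IPC} follows by taking unions, and \eqref{eq:VolC=VolIPC} is then automatic: both $\vol_\pi$ in \eqref{eq:volpi} and $\vol$ in \eqref{eq:volIPC} are defined by summing the same intrinsic volume function $\vol_T$ on $\overline{\R}^T$ over $T \in \decosetmax$, and the $T$-components have already been identified. Plugging \eqref{eq:VolC=VolIPC} into \cref{cor:NR23-application-Sr}, which applies because $\bfM$ is pseudo-cubical by hypothesis, then yields the final statement of the lemma.
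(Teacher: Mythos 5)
Your overall skeleton coincides with the paper's: the reduction of \eqref{eq:NC=IPC} to the per-chain identity $P_{\sigma_{\scC},\ast}(D_\bfM)=\IP(\bfM(T))\cap\sigma_{\scC}$ is precisely the content of the paper's \cref{lem:(2.9)}, and your handling of the easy inclusion, the union over $\scC\in\mchain(T)$ and $T\in\decosetmax$, and the volume comparison via \eqref{eq:volpi} and \eqref{eq:volIPC} is correct. The problem is that the heart of the matter is not proved. First, the classification of the vertices of $P_{\sigma_{\scC},\ast}(D_\bfM)$ as the block-average points $v_A$ is asserted, not argued (it is itself a consequence of pseudo-cubicality and needs proof, or at least a reduction to checking the candidate points $v_A$ whether or not they are vertices). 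Second, and more seriously, the key inequality $v_A(X)\le\rkk(X)$ for arbitrary colored $X\subseteq T$ is deferred to ``technical bookkeeping,'' and the only concrete mechanism you offer---concavity of $\rkk$ along a single maximal chain $\scC_X$ through $X$, e.g.\ $\rkk(X)/|X|\ge\rkk(Y_k)/k$---is not merely unfinished but insufficient as a plan.

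Here is a concrete obstruction. Relabel $T=\{t_1,\dots,t_n\}$ with $\scC=(S_1\subsetneq\cdots\subsetneq S_n)$, $S_k=\{t_1,\dots,t_k\}$, and suppose $\rkk(S_1)=1$ and $\rkk(S_k)=2$ for all $k\ge 2$ (consistent with \eqref{eq:DMpseudo} along $\scC$). The point $\vec{x}=\ove_{t_1}+\ove_{t_2}$ lies in $P_{\sigma_{\scC},\ast}(D_\bfM)$, with every chain-cap inequality tight, and for $X=\{t_2,t_n\}$ one must show $\rkk(X)\ge 1$. For a pseudo-cubical $\bfM$ this does hold, but it comes from $2\rkk(\{t_2\})\ge\rkk(\emptyset)+\rkk(S_2)=2$ together with monotonicity, i.e.\ from a chain whose bottom is $\{t_2\}\subsetneq S_2$---a chain that cannot pass through $X$, since $X$ and $S_2$ are incomparable. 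By contrast, the only member of $\scC$ comparable with $X$ is $S_n=T$, so concavity along any single chain through $X$ gives at best a bound of the form $\rkk(X)\ge\tfrac{2}{n}\rkk(T)=\tfrac{4}{n}$, far short of the required $1$ when $n$ is large. So the inequalities must be propagated through many auxiliary chains, and this propagation is exactly what the paper's proof of \cref{lem:(2.9)} supplies: it works with an arbitrary point $c_1\ge\cdots\ge c_n\ge 0$ of $P_{\sigma_{\scC},\ast}(D_\bfM)$ (no vertex analysis at all) and runs a double descending induction in which, for each $S$, the single pseudo-cubicality inequality $2\rkk(S)\ge\rkk(S\cup\{a_S\})+\rkk(S\setminus\{b_S\})$ is combined with the coordinate monotonicity $c_{a_S}\ge c_{b_S}$. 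To complete your argument you would need an inductive scheme of this kind; as written, the lemma's essential step is missing.
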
 
\begin{proof}
As noted in \eqref{eq:NCSigmapi}, we have
\[\NC_{\Sigma^{\pi}, \ast}(D_\bfM) = \bigcup_{T \in \decosetmax} \bigcup_{\scC \in \mchain(T)} P_{\sigma_{\scC, \ast}} (D_\bfM),\]
where $\mchain(T)$ again denotes the set of maximal chains $\scC = (S_1 \subsetneq \cdots \subsetneq S_n)$ of colored sets with $S_n = T$.  Expanding the definition of $P_{\sigma_{\scC, \ast}} (D_\bfM)$ as in \eqref{eq:Psigma} we obtain
\[P_{\sigma_{\scC}, \ast} (D_\bfM) = \left\{\vec{x} \in \sigma_{\scC} \; | \; \vec{x} \ast \ove_S \leq \rkk(S) \; \text{ for all } S \in \scC \right\}.\]
We claim, in fact, that
\[P_{\sigma_{\scC}, \ast} (D_\bfM) =  \{\vec{x} \in \sigma_{\scC} \; | \; \vec{x} \ast \ove_S \leq \rkk(S) \; \text{ for all } S \subseteq T\}.\]
Because the proof of this claim is somewhat cumbersome, we relegate it to \cref{lem:(2.9)} below.  Assuming it, and writing $\vec{x} \in \sigma_{\scC}$ in terms of the orthonormal basis $\{\ove_i\}_{i \in T}$ for $\overline{\R}^T$, we find
\begin{equation} \label{eq:UPC} \bigcup_{\scC \in \mchain(T)} P_{\sigma_{\scC}, \ast} (D_\bfM) =  \left\{\sum_{i \in T} x_i \ove_i \in \overline{\R}^T_{\geq 0} \; \left| \; \sum_{i \in S} x_i \leq \rkk(S) \; \text{ for all } S \subseteq T\right.\right\}.
\end{equation}
In view of \eqref{eq:IP-ABD}, we conclude that \eqref{eq:UPC} coincides with $\IP(\bfM(T))$, so taking the union over $T \in \decosetmax$ we see that the equality \eqref{eq:NC=IPC} holds.  The fact that the notions of volume agree is the content of equations \eqref{eq:volpi} and \eqref{eq:volIPC}, so \eqref{eq:VolC=VolIPC} holds, as well.
\end{proof}

\begin{lemma}
    \label{lem:(2.9)}
    Let $\bfM$ be a pseudo-cubical $\R$-multimatroid.  Then
    \[P_{\sigma_{\scC}, \ast} (D_\bfM) =  \{\vec{x} \in \sigma_{\scC} \; | \; \vec{x} \ast \ove_S \leq \rkk(S) \; \text{ for all } S \subseteq T\}\]
    for any $T \in \decosetmax$ and any $\scC \in \mchain(T)$.
\end{lemma}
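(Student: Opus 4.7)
The inclusion $\supseteq$ is immediate, since the rays of $\sigma_{\scC}$ are $\{\ove_{S_\ell}\}_{\ell \in [n]}$ with each $S_\ell \in \scC \subseteq 2^T$, so the constraints on the right-hand side form a superset of those defining $P_{\sigma_{\scC}, \ast}(D_\bfM)$.

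For the reverse inclusion, I will work in coordinates. Let $j_\ell$ denote the unique element of $S_\ell \setminus S_{\ell-1}$, so that $\{\ove_{j_\ell}\}_{\ell \in [n]}$ is the orthonormal basis of $\overline{\R}^T$ used in \cref{sec:NCcolfan}. Any $\vec{x} \in \sigma_{\scC}$ then admits a unique expression $\vec{x} = \sum_k x_k \ove_{j_k}$ with $x_1 \geq x_2 \geq \cdots \geq x_n \geq 0$, and the condition $\vec{x} \in P_{\sigma_{\scC}, \ast}(D_\bfM)$ is equivalent to the chain inequalities $x_1 + \cdots + x_\ell \leq \rkk(S_\ell)$ for every $\ell \in [n]$. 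Given $S \subseteq T$ with $|S| = m$, set $K = \{k : j_k \in S\} = \{k_1 < \cdots < k_m\}$; the target inequality then reads $\sum_{i=1}^m x_{k_i} \leq \rkk(S)$. If $K = [m]$ this is exactly the chain inequality at $S_m = S$, so the real work lies in the case $K \neq [m]$.

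The plan is to lower-bound $\rkk(S)$ by applying pseudo-cubicality to an auxiliary maximal chain $\scC^S$ in $\decoseto$ passing through $S$ at its $m$-th step---for instance, the chain $(\{j_{k_1}\} \subsetneq \{j_{k_1}, j_{k_2}\} \subsetneq \cdots \subsetneq S \subsetneq S \cup \{j_{\ell_1}\} \subsetneq \cdots \subsetneq T)$ obtained by first adjoining the elements of $S$ in $\scC$-induced order and then the elements of $T \setminus S$. By pseudo-cubicality the rank increments $\Delta^S_i = \rkk(S^S_i) - \rkk(S^S_{i-1})$ along $\scC^S$ are non-increasing, and by \cref{lem:PCimpliesRM} the rank function is submodular, which lets me compare these increments with the increments $\delta_\ell = \rkk(S_\ell) - \rkk(S_{\ell-1})$ of $\scC$. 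Combining these comparisons with the chain inequalities $\sum_{k \leq \ell} x_k \leq \rkk(S_\ell)$ and the ordering $x_1 \geq \cdots \geq x_n$ is intended to yield $\sum_{i=1}^m x_{k_i} \leq \rkk(S)$. Equivalently, in the LP-dual formulation, I look for non-negative weights $\alpha_\ell$ satisfying $\sum_\ell \alpha_\ell \min(\ell, \ell') \geq |S \cap S_{\ell'}|$ for every $\ell' \in [n]$ together with $\sum_\ell \alpha_\ell \rkk(S_\ell) \leq \rkk(S)$, the latter certified by pseudo-cubicality on the chosen chains.

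The main obstacle is that no single choice of auxiliary chain or dual weight works uniformly in $S$: the crude estimate $\rkk(S) \geq (m/n)\rkk(T)$ that follows from pseudo-cubicality on $\scC^S$ alone is already too weak when the elements of $K$ cluster near the top of $[n]$, as one sees for instance in the case $n=3$, $K=\{1,3\}$, where one must instead invoke pseudo-cubicality on the chain $(\{j_1\} \subsetneq \{j_1,j_3\} \subsetneq T)$ to obtain $\rkk(\{j_1,j_3\}) \geq \frac{1}{2}\rkk(\{j_1\}) + \frac{1}{2}\rkk(T)$. The general argument must therefore iterate pseudo-cubicality on several chains adapted to the combinatorial structure of $K$ (or, equivalently, assemble the dual weights $\alpha_\ell$ piece by piece), producing the careful case analysis that the authors describe as ``cumbersome.'' Once the bound is established for every $S \subseteq T$, the reverse inclusion follows and the lemma is proved.
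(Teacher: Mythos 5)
Your setup is right and the reduction is the same as the paper's: after writing $\vec{x}=\sum_k x_k\ove_{j_k}$ with $x_1\geq\cdots\geq x_n\geq 0$, everything comes down to proving $\sum_{i\in S}x_i\leq \rkk(S)$ for \emph{every} $S\subseteq T$, given only the chain inequalities $\sum_{i\in S_\ell}x_i\leq\rkk(S_\ell)$. But that is precisely the step you do not carry out: you propose to certify it by finding, for each $S$, dual weights $\alpha_\ell$ or a suitable collection of auxiliary maximal chains, observe that no uniform choice works, and then end with ``the general argument must therefore iterate pseudo-cubicality on several chains adapted to the combinatorial structure of $K$, producing the careful case analysis.'' That sentence is the entire content of the lemma; without an explicit construction (and a proof that it terminates and yields the bound for every $S$), what you have is a plan, not a proof. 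The one worked instance ($n=3$, $K=\{1,3\}$) does not indicate how the weights or chains are to be chosen in general, and the LP-dual reformulation as stated is not obviously even feasible: the required inequality $\sum_\ell\alpha_\ell\rkk(S_\ell)\leq\rkk(S)$ must itself be extracted from pseudo-cubicality and submodularity at sets outside $\scC$, which is exactly the missing combinatorial argument.

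For comparison, the paper closes this gap not by LP duality but by a double descending induction that replaces your ``chains adapted to $K$'' with two elementary moves: $f(S)=S\cup\{a_S\}$ with $a_S=\min([n]\setminus S)$ and $g(S)=S\setminus\{b_S\}$ with $b_S=\min\{i\in S: i>a_S\}$. If the bound holds for $f(S)$ and $g(S)$, then since $a_S<b_S$ forces $x_{a_S}\geq x_{b_S}$, adding the two bounds gives $2\sum_{i\in S}x_i\leq\rkk(f(S))+\rkk(g(S))$, and a single application of the pseudo-cubicality inequality \eqref{eq:DMpseudo} to any maximal chain through $g(S)\subsetneq S\subsetneq f(S)$ bounds this by $2\rkk(S)$; a descending induction first on $a_S$ and then on $b_S$ reduces every $S$ to sets of the form $[j]$, where the chain inequalities apply directly. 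If you want to salvage your approach, this is the structured version of the case analysis you were gesturing at, and you would need to supply something equally explicit.
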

\begin{proof}
Fix $T \in \decosetmax$ and $\scC \in \mchain(T)$.  Up to relabeling, we can write $T = [n]$ and
\[\scC = ([1] \subsetneq [2] \subsetneq \cdots \subsetneq [n]).\]  Now, let $\vec{x} \in P_{\sigma_{\scC}, \ast}(D_\bfM)$.  By definition, this means that $\vec{x} \in \sigma_{\scC} = \cone(\ove_{[1]}, \ove_{[2]}\ldots, \ove_{[n]})$, so
\[\vec{x} = a_1 \ove_{[1]} + \cdots + a_n \ove_{[n]}= (a_1 + \cdots + a_n) \ove_1 + (a_2 + \cdots + a_n)\ove_2 + \cdots + a_n \ove_n\]
for some $a_1, \ldots, a_n \geq 0$.  Equivalently, $\vec{x} = c_1 \ove_1 + \cdots + c_n\ove_n$ for some $c_1 \geq c_2 \geq \cdots \geq c_n \geq 0$.  The defining inequalities of $P_{\sigma_{\scC}, \ast}(D_\bfM)$ then imply that 
\begin{equation}
    \label{eq:(2.9)goal}
\sum_{i \in S} c_i \leq \rkk(S)
\end{equation}
whenever $S = [j]$ for some $j \in [n]$, and what we must prove is that the same is true for all $S \subseteq [n]$.

To prove this, we define a pair of functions
\[f: \calP([n]) \rightarrow \calP([n]) \;\; \text{ and } \;\; g: \calP([n]) \rightarrow \calP([n])\]
on the power set $\calP([n])$ by setting $f([j]) = g([j]) = [j]$ for each $j \in [n]$, and setting
\[f(S) = S \cup \{a_S\} \;\; \text{ and } \;\; g(S) = S \setminus \{b_S\}\]
for any $S$ that is not of this form, where $a_S$ and $b_S$ are defined by
\begin{align*}
    &a_S \coloneqq \min\{i \in [n] \; | \; i \notin S\}, &&b_S \coloneqq \min\{i \in S \; | \; i > a_S\}.
\end{align*}
We first claim that, if \eqref{eq:(2.9)goal} holds for $f(S)$ and $g(S)$, then it also holds for $S$.  This is trivially true when $S = [j]$ for some $j$, so we prove it in the case where $S$ is not of this form.  The assumption that \eqref{eq:(2.9)goal} holds for $f(S)$ means that $c_{a_S} + \sum_{i \in S} c_i \leq \rkk(f(S))$.  Since $a_S < b_S$ and therefore $c_{a_S} \geq c_{b_S}$, it follows that
\begin{equation}
    \label{eq:assumption1}
    c_{b_S} + \sum_{i \in S} c_i \leq \rkk(f(S)).
\end{equation}
On the other hand, the assumption that \eqref{eq:(2.9)goal} holds for $g(S)$ means that
\begin{equation}
    \label{eq:assumption2}
    -c_{b_S} + \sum_{i \in S} c_i \leq \rkk(g(S)).
\end{equation}
Adding \eqref{eq:assumption1} and \eqref{eq:assumption2} yields 
\[2\sum_{i \in S} c_i \leq \rkk(f(S)) + \rkk(g(S)) \leq 2\rkk(S),\]
where the second inequality follows by applying the pseudo-cubicality condition \eqref{eq:DMpseudo} to any maximal chain containing $g(S) \subsetneq S \subsetneq f(S)$.  This shows that \eqref{eq:(2.9)goal} holds for $S$, as claimed.

From here, we prove that \eqref{eq:(2.9)goal} holds for all $S \subseteq [n]$ by descending induction, first on $a_S$ and then on $b_S$.  Note that it suffices to prove the claim when $S$ is not of the form $[j]$ for any $j \in [n]$, since it holds by assumption when $S$ is of this form.  The base case of the first induction is the case $a_S = n-1$.  Then $f(S) = [n]$ and $g(S) = [n-2]$, so \eqref{eq:(2.9)goal} holds for both of these and therefore holds for $S$.  Suppose, then, that \eqref{eq:(2.9)goal} holds for all $S' \subseteq [n]$ with $a_{S'}>k$, and let $S$ be such that $a_S = k$.  We now introduce the second induction, a descending induction on $b_S$.

The base case is $b_S = n$.  In this case, $f(S)$ has $a_{f(S)}> k$, so \eqref{eq:(2.9)goal} holds for $f(S)$ by the first inductive hypothesis, whereas $g(S) = [k-1]$, so \eqref{eq:(2.9)goal} holds for $g(S)$ by assumption.  Therefore, \eqref{eq:(2.9)goal} holds for $S$, completing the base case.  Finally, suppose that \eqref{eq:(2.9)goal}  holds for all $S'' \subseteq [n]$ with $b_{S''} > \ell$, and let $S$ be such that $b_S = \ell$.  Then $f(S)$ has $a_{f(S)}>k$ and $g(S)$ has $b_{g(S)} > \ell$, so \eqref{eq:(2.9)goal} holds for both of these by the two inductive hypotheses.  This implies that \eqref{eq:(2.9)goal} holds for $S$, completing the proof.
\end{proof}

\begin{remark}
\cref{lem:(2.9)} shows that the fan $\Sigma^\pi$ satisfies the global condition of \cite[equation (2.9)]{NR23}.
\end{remark}

\begin{remark} We note that the pseudo-cubical condition is critical in order for Lemmas~\ref{lem:C(DM)=IPC(M)} and \ref{lem:(2.9)} to hold.  One indication of this, as pointed out in \cref{rmk:IPCM-pc}, is that $\IPC(\bfM)$ is always a polytopal complex, whereas $\NC_{\Sigma^\pi, \ast}(D_\bfM)$ is not necessarily a polytopal complex unless $\bfM$ is pseudo-cubical.

For a specific example, consider the $\R$-multimatroid $\bfM$ on $E = \{1\} \sqcup \{2\}$ defined by
\[ \rkk(\{1\}) = 2, \qquad  \rkk(\{2\}) = 1, \qquad\rkk(\{1,2\}) = 3.\]
The figure below illustrates $C_{\Sigma^\pi, \ast}(D_{\bfM})$ on the left and $\IPC(\bfM)$ on the right.

\begin{figure}[h!]
\centering
    \begin{tikzpicture}[scale=1]
    \draw[->] (0, 0) -- (3,0);
    \draw[->] (0,0) -- (0,2);
    \draw[->] (0,0) -- (2, 2);
    \draw[thick] (2,0) -- (2,1.5);
    \draw[thick] (0,1) -- (2.5,1);
    \draw[thick] (1.25, 1.75) -- (2.25, 0.75);
    \fill[fill=ao(english), opacity=0.3] (0,0) -- (2,0) -- (2,1) -- (1.5,1.5) -- (0,0);
    \fill[fill=ao(green), opacity=0.3] (0,0) -- (1,1) -- (0,1) -- (0,0);
    \node at (3.25,0) {$\e_{1}$};
    \node at (0,2.25) {$\e_{1}$};
    \node at (2.25,2.25) {$\e_{\{1,2\}}$};
    \node at (1,-0.5) {$C_{\Sigma^\pi, \ast}(D_{\bfM})$};
    \node at (3,0.4) {\textcolor{ao(english)}{$P_{\sigma_{\scC}, \ast}(D_{\bfM})$}}; 
    \begin{scope}[shift={(6,0)}]
    \draw[->] (0, 0) -- (3,0);
    \draw[->] (0,0) -- (0,2);
    \draw[->] (0,0) -- (2,2);
    \draw[thick] (2,0) -- (2,1.5);
    \draw[thick] (0,1) -- (2.5,1);
    \draw[thick] (1.25, 1.75) -- (2.25, 0.75);
    \fill[fill=gray, opacity=0.3] (0,0) -- (2,0) -- (2,1) -- (0,1) -- (0,0);
        \node at (3.25,0) {$\e_{1}$};
    \node at (0,2.25) {$\e_{1}$};
    \node at (2.25,2.25) {$\e_{\{1,2\}}$};
    \node at (1,-0.5) {$\IPC(\bfM)$};
    \end{scope}
    \end{tikzpicture}
\end{figure}
Note that in the cone $\sigma_{\scC}$ corresponding to $\scC = (\{1\} \subseteq \{1,2\})$, the purple shaded polytope  $P_{\sigma_{\scC}, \ast}(D_{\bfM})$ on the left is bounded only by the two hyperplanes normal to the two incident rays.  On the other hand, $\IPC(\bfM)$ is bounded by all three hyperplanes normal to the rays in $\overline{\R}^T_{\geq 0}$.  Thus, we see that $C_{\Sigma^\pi, \ast}(D_\bfM) \neq \IPC(\bfM)$ in this example, and moreover, that Lemma~\ref{lem:(2.9)} fails: imposing the defining equalities $\vec{x} \ast \ove_S \leq \rkk(S)$ on each maximal cone separately (which defines $C_{\Sigma^\pi, \ast}(D_\bfM)$) does not coincide with imposing them simultaneously on all of $\R^T_{\geq 0}$ (which defnes $\IPC(\bfM)$).
\end{remark}
 
 To illustrate \cref{lem:C(DM)=IPC(M)} in the pseudo-cubical case, it is illuminating to look back at Examples~\ref{ex:IPC1} and \ref{ex:IPC2}.  In particular, one can see visually that the independence polytopal complexes in both of these examples are normal complexes of the fan $\Sigma^\pi$ illustrated in Figure~\ref{fig:SigmaB2}, and that \cref{ex:IPC2} is cubical while \cref{ex:IPC1} is not; this is consistent with the computations of Examples~\ref{ex:M12bar12-DM} and \ref{ex:cubicalDM}.  To see the same phenomenon in higher dimension, we turn to the following example.

\begin{example} 
Suppose that $n=3$, and let $\bfM$ be the cubical $\R$-matroid defined by \eqref{eq:PCRmultimatroid}.  Then, for any $T \in \decosetmax$, the independence polytope $\IP(\bfM(T))$ is the polytope illustrated in Figure~\ref{fig:cubicalIPC}.  In particular, note that each maximal cone of $\Sigma^\pi$ contains exactly one vertex of the complex, which is consistent with the claim that $\IPC(\bfM)$ is a normal complex of $\Sigma^\pi$ associated to a cubical divisor. 
    \begin{figure}[h!]
        \centering
\tdplotsetmaincoords{70}{125} 

\begin{tikzpicture}[tdplot_main_coords,scale=1]
  \draw (0,0,0) -- (0,0,3);
  \draw (0,0,0) -- (0,3,0);
  \draw (0,0,0) -- (3,0,0);
  
    \draw [ao(english), fill=ao(english),opacity=0.2] (1,2,3) -- (2,1,3) -- (3,1,2) -- (3,2,1) -- (2,3,1) -- (1,3,2)-- (1,2,3);
    \draw [black, fill=ao(english), fill opacity=0.2] (1,3,2) -- (0,3,2) -- (0,2,3) -- (1,2,3) -- (1,3,2);
    \draw [black, fill=ao(english), fill opacity=0.2](2,1,3)--(2,0,3) --(3,0,2) -- (3,1,2) -- (2,1,3);
    \draw [black, fill=ao(english), fill opacity=0.2](2,3,1) -- (2,3,0) -- (3,2,0) -- (3,2,1) --(2,3,1);
    \draw [black, fill=ao(english), fill opacity=0.2](0,0,3) -- (0,2,3) -- (1,2,3) -- (2,1,3) -- (2,0,3)--(0,0,3);
    \draw [black, fill=ao(english), fill opacity=0.2](0,3,0) -- (0,3,2) -- (1,3,2) -- (2,3,1) -- (2,3,0)--(0,3,0);
    \draw [black, fill=ao(english), fill opacity=0.2](3,0,0) -- (3,0,2) -- (3,1,2) -- (3,2,1) -- (3,2,0)--(3,0,0);

    \fill[black] (1,2,3) circle (1pt);
    \fill[black] (2,1,3) circle (1pt);
    \fill[black] (1,3,2) circle (1pt);
    \fill[black] (2,3,1) circle (1pt);
    \fill[black] (3,1,2) circle (1pt);
    \fill[black] (3,2,1) circle (1pt);

    \fill [magenta!70!orange, opacity=0.1] 
    (0,0,0) -- (2,2,2) -- (2.5,2.5, 1) -- (2.5,2.5,0) -- (0,0,0);
     \fill [magenta!70!orange, opacity=0.1] (0,0,0) -- (2,2,2) -- (1.5,3,1.5) -- (0,3,0) -- (0,0,0);
     \fill [magenta!70!orange, opacity=0.1] (0,0,0) -- (2.5,2.5, 0) -- (2,3,0) -- (0,3,0) -- (0,0,0);
    
    \fill [magenta!70!orange, opacity=0.3] (2,2,2) -- (3,3,3) -- (4,4,0) -- (2.5,2.5,0) -- (2.5,2.5,1) -- (2,2,2);
    \fill [magenta!70!orange, opacity=0.3] (2,2,2) -- (3,3,3) -- (0,4,0) -- (0,3,0) -- (1.5,3,1.5) -- (2,2,2);
    \fill [magenta!70!orange, opacity=0.3] (2.5,2.5,0) -- (2,3,0) -- (0,3,0) -- (0,4,0) -- (4,4,0) -- (2.5,2.5,0);

    \draw [dashed, thick, magenta] 
    (2,2,2) -- (2.5,2.5, 1) -- (2.5,2.5,0)
    (2,2,2) -- (1.5,3,1.5) -- (0,3,0)
    (2.5,2.5, 0) -- (2,3,0) -- (0,3,0); 

    \draw [magenta!70!orange, opacity=0.4]
    (0,3,0)--(0,4,0)
    (2.5, 2.5, 0) -- (4,4,0)
    (2,2,2)--(3,3,3);

    \draw [magenta!70!orange, opacity=0.2]
    (0,0,0) -- (0,3,0)
    (0,0,0) -- (2.5, 2.5, 0)
    (0,0,0) -- (2,2,2);
    
\end{tikzpicture}
        \caption{The polytope $\IP(\bfM(T))$ for $\bfM$ as in \eqref{eq:PCRmultimatroid} and any $T \in \decosetmax$, with one of the maximal cones of $\Sigma^\pi$ shaded.} 
        \label{fig:cubicalIPC}
    \end{figure}
\end{example}

Having proven \cref{thm:multimatroid_integral} in the pseudo-cubical case, the general case follows almost immediately by polynomiality considerations. 

\begin{proof}[Proof of \cref{thm:multimatroid_integral}] Letting $\bfM$ vary in $\scM$, both sides of the theorem can be viewed as functions on $\scM$---that is, functions on the parameters $\left(\rkk(S)\right)_{S \in \decoseto}$.  The left-hand side is manifestly polynomial in these parameters, and the right-hand side is polynomial as well by \cref{lem:volispoly}.  \Cref{lem:C(DM)=IPC(M)} shows that the theorem holds when $\bfM$ is pseudo-cubical, so it in particular holds when $\bfM$ is cubical.  Therefore, the two sides of \cref{thm:multimatroid_integral} agree on the subset $\scMc \subseteq \scM$, which is nonempty and open by \cref{lem:non-empty}.  Polynomiality then implies that they agree on all of $\scM$, proving the result.
\end{proof}

The proof of \cref{thm:multimatroid_integral} shows that it can actually be seen as an identity between polynomials functions in the parameters $\left(\rkk(S)\right)_{S \in \decoseto}$ on the space $\R^{\decoseto}_{\geq 0}$.  For values of these parameters that do not necessarily define an $\R$-multimatroid, the left-hand side of the theorem manifestly still makes sense; as for the right-hand side, note that \eqref{eq:IPCmax} can be taken as the definition of $\IPC(\bfM)$, and while it may not be a polytopal complex for values of $\left(\rkk(S)\right)_{S \in \decoseto}$ that do not satisfy the $\R$-multimatroid axioms, it is nevertheless a union of polytopes with finite volume.  In the following subsection, we rephrase this equality of polynomials in a different basis for $\R^{\decoseto}$, which is what will ultimately allow us to deduce \cref{thm:h-integrals}.

\subsection{An alternative formulation}

Recall from \eqref{eq:hS} that there is an alternative set of generators $h_S$ for $A^*(\Sigma^\pi)$, and the divisors $x_S$ can be expressed in terms of the divisors $h_S$ via \cref{lem:hStoxS}.  Thus, we can rewrite
\begin{equation}
    \label{eq:DMaS}
D_{\bfM} = \sum_{S \in \decoseto} a_S h_S
\end{equation}
for coefficients $\aaS = (a_S)_{S \in \decoseto}$.  In particular, the left-hand side of \cref{thm:multimatroid_integral} is equal to the value at this particular choice of $\aaS$ of the polynomial
\[I(\aaS) \coloneqq  \int_{\Sigma^{\pi}}  \left(\sum_{S \in \decoseto} a_S h_S\right)^n.\]
The right-hand side of \cref{thm:multimatroid_integral} can also be expressed as a value of a polynomial in $\aaS$, and this polynomial turns out to have a very nice description.  To state this description, for any $T \in \decosetmax$ and any $S \subseteq T$, define the simplex
\[
\Delta^T_S \coloneqq \conv \bigg(\{\mathbf{0}\}\cup \{\ove_j \; | \; j\in S\}\bigg) \subseteq \overline{\R}^T.\]
Then the intersection of $\IPC(\bfM)$ with $\overline{\R}^T$ can be expressed as a Minkowski sum of these simplices, as the following proposition verifies.

\begin{proposition}\label{cor:IPC(M)=Delta} Let $\bfM$ be an $\R$-multimatroid. Then 
\[\Vol(\IPC(\bfM)) = \sum_{T \in \decosetmax} \Vol_T\left( \sum_{S \in \decoseto} a_{S} \Delta^T_{S \cap T}\right),\]
where the sum denotes the Minkowski sum of polytopes and the coefficients $\aaS = (a_S)_{S \in \decoseto}$ are defined by \eqref{eq:DMaS}.
\end{proposition}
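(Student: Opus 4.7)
The plan is to prove the stronger polytope-level identity
\[\IP(\bfM(T)) = \sum_{S \in \decoseto} a_S \Delta^T_{S \cap T}\]
in $\overline{\R}^T$ for each $T \in \decosetmax$ on a suitable nonempty open subset of the parameter space $\R^{\decoseto}$, and then to sum volumes over $T$ and extend by polynomiality to recover the volume identity on all of $\scM$.

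First I would verify that both sides of the proposed equality are polynomial functions of $(\rkk(S))_{S \in \decoseto}$. The left-hand side is polynomial by \cref{lem:volispoly}. For the right-hand side, \cref{lem:hStoxS} expresses each $a_S$ as an explicit linear combination of the $\rkk(S')$'s, while $\Vol_T(\sum_S a_S \Delta^T_{S \cap T})$ expands via mixed volumes as a homogeneous polynomial of degree $n$ in the $a_S$'s; composing and summing over $T$ gives a polynomial in the $\rkk(S')$'s. It therefore suffices to verify the identity on any nonempty open subset of $\R^{\decoseto}$.

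The region I would use is
\[U \coloneqq \{\rkk \in \R^{\decoseto} : a_S(\rkk) > 0 \text{ for every } S \in \decoseto\},\]
which is open as a finite intersection of open half-spaces and nonempty: starting from any strictly positive tuple $(a_S)_{S \in \decoseto}$ and setting $\rkk(V) \coloneqq \sum_{S : S \cap V \neq \emptyset} a_S$ produces a point of $U$, and a routine indicator-function argument shows that this $\rkk$ is monotone and submodular, hence lies in $\scM$. On $U$, both $\IP(\bfM(T))$ and $\sum_{S \in \decoseto} a_S \Delta^T_{S \cap T}$ are generalized permutohedra in $\overline{\R}^T$ whose normal fans are coarsened by the stellahedral fan---the former because $\bfM(T)$ is a polymatroid, and the latter as a positive Minkowski sum of star simplices $\Delta^T_V$, each of which has this property---and such polytopes are determined by their support-function values on the rays of the stellahedral fan. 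In direction $\ove_V$ for nonempty $V \subseteq T$, the Minkowski-sum side has support value $\sum_S a_S [S \cap V \neq \emptyset]$, while the greedy algorithm for polymatroids yields $\rkk(V)$ for $\IP(\bfM(T))$; these agree via the numerical identity $\rkk(V) = \sum_S a_S [S \cap V \neq \emptyset]$, which is built in to the definition of $a_S$ in \cref{lem:hStoxS} as the formal inverse of the map $h_S \mapsto \sum_{S' \cap S \neq \emptyset} x_{S'}$. In the remaining stellahedral directions $-\ove_i$, both support functions vanish, since both polytopes lie in $\overline{\R}^T_{\geq 0}$ and contain the origin.

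The main obstacle I anticipate is the choice of open set. The cubical region $\scMc$ used in the proof of \cref{thm:multimatroid_integral} does not suffice here, because cubicality does not force positivity of the $a_S$: for the Boolean multimatroid, $a_S = 0$ once $|S| \geq 2$, and small cubical perturbations can send some $a_S$ strictly negative, degenerating the Minkowski sum to a merely virtual polytope and collapsing the support-function identification. The region $U$ above is engineered precisely to keep both polytopes genuine so the support-function comparison can be carried out literally rather than just formally.
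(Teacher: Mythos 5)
Your proposal is correct, but it takes a genuinely different route from the paper. The paper proves the stronger polytope-level identity $\IP(\bfM(T)) = \sum_{S \in \decoseto} a_S \Delta^T_{S \cap T}$ for \emph{every} $\R$-multimatroid at once: it quotes \cite[Proposition 4.3]{ABD}, which writes the independence polytope of the polymatroid $\bfM(T)$ as a signed Minkowski sum $\sum_{S \subseteq T} a_S^T \Delta^T_S$ with coefficients coming from the $h$-basis on subsets of $T$, and then does a change-of-basis bookkeeping (a commutative diagram between $\Z[\{h_S\}_{S \in \decoseto}]$ and $\Z[\{h^T_S\}_{S \subseteq T}]$) to identify $a_S^T = \sum_{S' \cap T = S} a_{S'}$. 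You instead prove the honest, unsigned Minkowski decomposition by comparing support functions of two stellahedral deformations on the rays $\ove_V$ ($\emptyset \neq V \subseteq T$) and $-\ove_i$, valid on the open cone $U$ where all $a_S > 0$ (your check that $U$ is nonempty and contained in $\scM$ is right, and you are also right that $\scMc$ would not serve this purpose), and then you propagate the volume identity to all of $\scM$ by polynomiality, mirroring the paper's own proof of \cref{thm:multimatroid_integral}. The trade-off is clear: the paper needs no density step and gets the polytope identity in full generality (in the signed-Minkowski sense), at the cost of importing \cite{ABD}; your route is more self-contained---its inputs are that polymatroid independence polytopes and the simplices $\Delta^T_V$ have support functions linear on the cones of the stellahedral fan (the same nefness fact from \cite{EHL} already invoked in \cref{lem:volispoly}) plus Minkowski's mixed-volume expansion---but it yields the polytope identity only on $U$, and for a general $\R$-multimatroid, where some $a_S$ may be negative, the right-hand side of \cref{cor:IPC(M)=Delta} must be read as the mixed-volume polynomial in $\aaS$; that is, however, exactly the reading the paper adopts when defining $V(\aaS)$ and the one needed for \cref{thm:h-integrals}, so nothing is lost. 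Two small points to tighten: the normal fans of your two polytopes are \emph{refined} by (not ``coarsened by'') the stellahedral fan, and it is worth saying explicitly that $\Delta^T_V$ is the independence polytope of the monotone submodular function $A \mapsto \min(1, |A \cap V|)$, which is what makes it a stellahedral deformation and also makes the support value $\rkk(V) = \sum_{S \cap V \neq \emptyset} a_S$ drop out of the formal inversion in \cref{lem:hStoxS} exactly as you say.
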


Before proving the proposition, we illustrate it in some examples.

\begin{example}
    Consider the $\R$-multimatroid in \cref{ex:Rmultimatroid}, whose independence polytopal complex is illustrated in \cref{ex:IPC1}.  Via the change of coordinates from $\{x_S\}$ to $\{h_S\}$ in \cref{lem:hStoxS}, we obtain
    \[D_{\bfM} = h_{1\bar{2}} + h_{\bar{1}2},\]
    so $a_{1\bar{2}} = a_{\bar{1}2} = 1$ and $a_S = 0$ for all other $S \in \decoseto$.  Thus, taking $T = \{1,2\}$ on the right-hand side of \cref{cor:IPC(M)=Delta}, one obtains the following Minkowski sum of polytopes:
    \[1\Delta^T_{\{1, \bar{2}\} \cap \{1,2\}} + 1\Delta^T_{\{\bar{1}, 2\} \cap \{1,2\}} = 1\Delta^T_{\{1\}} + 1 \Delta^T_{\{2\}}.\]
    The contribution to the proposition from this $T$ then follows from the equality
    \[\IPC(\M(\{1,2\})) = 1\Delta^T_{\{1\}} + 1 \Delta^T_{\{2\}},\]
    which can be seen from \cref{ex:IPC1} because $\IPC(\M(\{1,2\}))$ is a square with vertices at $\mathbf{0}$, $\ove_1$, $\ove_2$, and $\ove_1 + \ove_2$.  On the other hand, a similar computation shows that the contribution to the proposition from  $T = \{\bar{1},2\}$ is
    \[\IPC(\M(\{\bar{1},2\})) = 1 \Delta^T_{\{\bar{1},2\}},\]
    which can again be seen from \cref{ex:IPC1} because $\IPC(\M(\{\bar{1},2\}))$ is the standard simplex in its quadrant.
\end{example}

\begin{example} \label{ex:IPCsum}
    Now consider the $\R$-multimatroid in \cref{ex:Rmultimatroid}, whose independence polytopal complex is illustrated in \cref{ex:IPC2}.  Again applying \cref{lem:hStoxS}, we find
\[D_\bfM= -1 \left(h_{1}+h_{\bar{1}}\right) - 2 \left(h_{2} + h_{\bar{2}}\right) + 3 \left(h_{\{1,2\}} + h_{\{\bar{1},2\}}+ h_{\{1,\bar{2}\}}+ h_{\{\bar{1},\bar{2}\}}\right).\]
    Hence, the Minkowski sum of polytopes on the right-hand sum of \cref{cor:IPC(M)=Delta} for $T = \{1,2\}$ is
    \begin{align*}
    &-1 \Delta_{\{1\} \cap \{1,2\} }^T -2 \Delta_{\{2\} \cap \{1,2\}}^T + 3 \Delta_{\{1,\bar{2}\} \cap \{1,2\}}^T + 3 \Delta_{\{\bar{1},2\} \cap \{1,2\}}^T + 3 \Delta_{\{1,2\}}^T\\
    =&-1 \Delta_{\{1\}}^T -2 \Delta_{\{2\}}^T + 3 \Delta_{\{1\}}^T + 3 \Delta_{\{2\}}^T + 3 \Delta_{\{1,2\}}^T\\
    =& \;2 \Delta_{\{1\}}^T + 1 \Delta_{\{2\}}^T + 3 \Delta_{\{1,2\}}^T.
    \end{align*}
    Thus, the statement of the proposition for $T = \{1,2\}$ is that 
    \[\IP(\bfM(\{1,2\})) = 2 \Delta_{\{1\}}^T + 1 \Delta_{\{2\}}^T + 3 \Delta_{\{1,2\}}^T,
    \]
    which one indeed sees is the case from the figure below.
            
            \begin{figure*}[h!]
        \centering
\begin{tikzpicture}[xscale=0.4, yscale=0.4]
  \foreach \i in {0,1,...,5} \draw [opacity=0.4] (\i,-0.1)--(\i,.1);
  \foreach \i in {0,1,...,4} \draw [opacity=0.4] (-0.1,\i)--(.1,\i);
  
\draw [ao(english),fill=ao(english),opacity=0.3] (2,4) -- (5,1) -- (5,0) -- (0,0) -- (0,4) -- (2,4);
\draw [opacity=0.3, ->] (0,0) --(5.6,0);
\draw [opacity=0.3, ->] (0,0) --(0,4.6);
\begin{scope}[shift={(10,0)}]
  \foreach \i in {0,1,...,5} \draw [opacity=0.4] (\i,-0.1)--(\i,0.1);
  \foreach \i in {0,1,...,4} \draw [opacity=0.4] (-0.1,\i)--(0.1,\i);
    \draw [thick, orange!50!magenta,fill=orange!50!magenta,opacity=0.3] (2,1) -- (2,0) -- (0,0) -- (0,1) -- (2,1);
    \draw [thick, blue,fill=blue,opacity=0.3]  (2,4) -- (2,1) -- (5,1)--(2,4);
    \draw [thick, ao(english), opacity=0.3] (5,1)--(5,0);
    \draw [thick, ao(english), opacity=0.3] (2,4)--(0,4);
    \draw [thick, orange] (0,0) -- (0,1);
    \draw [thick, magenta] (0,0) -- (2,0);
\draw [opacity=0.3, ->] (0,0) --(5.6,0);
\draw [opacity=0.3, ->] (0,0) --(0,4.6);
\end{scope}
\end{tikzpicture}    \end{figure*}
\noindent A similar decomposition applies in this example for all other $T \in \decosetmax$.
\end{example}

Equipped with the intuition of these examples, we are prepared to prove the proposition in general.

\begin{proof}[Proof of \cref{cor:IPC(M)=Delta}]
By the definition of $\Vol(\IPC(\bfM))$ in \eqref{eq:volIPC}, it suffices to prove that, for any $T \in \decosetmax$, one has
\begin{equation}
\label{eq:IPMT=MinkowskiSum}
    \IP(\bfM(T)) = \sum_{S \in \decoseto} a_{S} \Delta^T_{S \cap T}.
\end{equation}
This follows from the computation of the independence polytope of a matroid as a Minkowski sum of simplices given in \cite{ABD}.  Before stating their result, we require some notation. Note, either using \cref{lem:hStoxS} or through \cite{ABD}, that the free module $\Z[\{x_S\}_{S\subseteq T}]$ is isomorphic to $\Z[\{h_S^T\}_{S\subseteq T}]$, where \[h_S^T : = \sum_{\substack{S' \subseteq T\\ S \cap S' \neq \emptyset}} x_{S'}.\] Thus, one can write \[D_{\bfM(T)} \coloneqq  \sum_{S \subseteq T} \rkk(S) x_S= \sum_{S\subseteq T} a_S^T h_S^T\] for uniquely defined  $a_S^T \in \Z$.  In fact, \cite[Proposition 4.3]{ABD} shows that
\begin{equation}
\label{eq:ABD}
\IP(\bfM(T)) = \sum_{S \subseteq T} a_S^T \Delta^T_S,
\end{equation}
so the content of the proof of \eqref{eq:IPMT=MinkowskiSum} is relating the coefficients $a_S^T$ to the coefficients $a_S$.  We can unpack this relationship using the commutative diagram
\[\begin{tikzcd} [row sep=3mm, column sep=2cm]   
 \Z[\{h_S\}_{S\in \decoseto}] \arrow[r] \arrow[ddd, "\cong"] & \Z[\{h_S^T\}_{S\subseteq T}] \arrow[ddd, "\cong"] \\
 \\        \\
 \Z[\{x_S\}_{S\in \decoseto}] \arrow[r] & \Z[\{x_S\}_{S \subseteq T}],
 \end{tikzcd}
 \]
where the upper horizontal arrow is $h_S \mapsto h^T_{S \cap T}$ and the lower horizontal arrow is
\[x_S \mapsto \begin{cases} x_S & \text{ if } S \subseteq T\\ 0 & \text{otherwise.}\end{cases}\]
In particular, applying the commutativity of this diagram to $\sum_{S \in \decoseto} a_Sh_S$ shows that
\[\sum_{S \subseteq T} \rkk(S) x_S = \sum_{S \in \decoseto} a_S h^T_{S \cap T}.\]
Re-writing the right-hand side slightly gives
\[\sum_{S \subseteq T} \rkk(S) x_S = \sum_{S \subseteq T} \left( \sum_{\substack{S' \in \decoseto\\ S' \cap T = S}} a_{S'} \right)h^T_{S},\]
from which we deduce
\begin{equation*}
a_S^T = \sum_{\substack{S' \in \decoseto\\ S' \cap T = S}} a_{S'}.
\end{equation*}
Combining this with \eqref{eq:ABD} shows \eqref{eq:IPMT=MinkowskiSum} and thus completes the proof.
\end{proof}

In light of \cref{cor:IPC(M)=Delta}, we define the polynomial
\[V(\aaS) \coloneqq  \sum_{T \in \decosetmax} \Vol_T\left( \sum_{S \in \decoseto} a_{S} \Delta^T_{S \cap T}\right)\]
in the parameters $\aaS$.  The equality of \cref{thm:multimatroid_integral}, when translated into these parameters, now becomes the following.

 \begin{thm}
 \label{thm:h-integrals-v2}
\Cref{thm:multimatroid_integral} is equivalent to the equality of polynomials $I(\aaS) = V(\aaS)$.
 \end{thm}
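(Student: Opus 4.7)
The plan is to reduce the equivalence to the fact that two polynomials on $\R^{\decoseto}$ which agree on a full-dimensional subset must coincide, with the linear change of basis supplied by \cref{lem:hStoxS} serving as the bridge between the $(\rkk(S))_S$ parameters and the $(a_S)_S$ parameters.

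First I would observe that \cref{lem:hStoxS} yields a linear isomorphism $\Phi\colon \R^{\decoseto}\to\R^{\decoseto}$ sending a candidate rank function $(\rkk(S))_S$ to the unique tuple $\aaS = (a_S)_S$ determined by
\[
\sum_{S\in \decoseto} \rkk(S)\, x_S \;=\; \sum_{S\in\decoseto} a_S\, h_S.
\]
Under this identification, the definition of $I$ gives $I(\aaS) = \int_{\Sigma^\pi} D_\bfM^n$, while \cref{cor:IPC(M)=Delta} gives $V(\aaS) = \Vol(\IPC(\bfM))$. Thus \cref{thm:multimatroid_integral} for a specific $\bfM$ is literally the scalar identity $I(\aaS)=V(\aaS)$ evaluated at the corresponding $\aaS = \Phi((\rkk(S))_S)$.

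For the forward direction (equality of polynomials implies \cref{thm:multimatroid_integral}), I would simply evaluate the polynomial identity at this particular $\aaS$ for each $\bfM\in\scM$. For the reverse direction, I would argue that if \cref{thm:multimatroid_integral} holds, then $I\equiv V$ on $\Phi(\scM)\subseteq \R^{\decoseto}$. By \cref{lem:non-empty} the set $\scM$ has nonempty interior in $\R^{\decoseto}$, and since $\Phi$ is a linear isomorphism, $\Phi(\scM)$ also has nonempty interior. Both $I$ and $V$ are polynomial functions on $\R^{\decoseto}$---$I$ by direct inspection of its definition, and $V$ because \cref{lem:volispoly} shows that $\Vol(\IPC(\bfM))$ is polynomial in $(\rkk(S))_S$ and $\Phi^{-1}$ is linear---so agreement on a set with nonempty interior forces $I\equiv V$ as polynomials.

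The only point that requires care will be to confirm that $V$ is genuinely polynomial in the $\aaS$, not only in the $(\rkk(S))_S$; this is automatic from the linearity of the change of variables $\Phi$, so no real obstacle arises. Beyond this verification, the argument is a routine bookkeeping exercise once \cref{cor:IPC(M)=Delta} and \cref{lem:hStoxS} are in hand.
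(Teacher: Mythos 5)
Your proposal is correct and follows essentially the same route as the paper: identify $\int_{\Sigma^\pi} D_\bfM^n$ with $I(\aaS)$ by definition and $\vol(\IPC(\bfM))$ with $V(\aaS)$ via \cref{cor:IPC(M)=Delta}, then pass between ``equality on $\scM$'' and ``equality of polynomials'' using polynomiality together with the fact that $\scM$ has nonempty interior (\cref{lem:non-empty}). The only difference is that you make the linear change of coordinates $\Phi$ from $(\rkk(S))_S$ to $\aaS$ explicit, which the paper leaves implicit.
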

 \begin{proof}
As above, fix an $\R$-multimatroid $\bfM$ and write $D_\bfM = \sum_{S \in \decoseto} a_S h_S$. Then the left-hand side of \eqref{eq:thm51} is equal to $I(\aaS)$ by definition, and \cref{cor:IPC(M)=Delta} shows that the right-hand side of \eqref{eq:thm51} is equal to $V(\aaS)$.  In particular, \cref{thm:multimatroid_integral} is equivalent to the statement that $I(\aaS) = V(\aaS)$ on $\scM$, the subset of $\R^{\decoseto}$ consisting of values of the parameters $\aaS$ that define an $\R$-multimatroid.  Since both sides are polynomial and $\scM$ contains an open subset of $\R^{\decoseto}$, this is equivalent to the corresponding equality on the entirety of $\R^{\decoseto}$.
 \end{proof}

\subsection{Proof of Theorem~\ref{thm:h-integrals}}

Having re-expressed \cref{thm:multimatroid_integral} as an equality of polynomials in this way, it follows that the coefficient of any monomial in $I(\aaS)$ agrees with the corresponding coefficient in $V(\aaS)$.  The coefficient of a monomial in $I(\aaS)$ is, by definition, an integral of a monomial in the $h_S$'s.
On the other hand, the coefficient of the corresponding monomial in $V(\aaS)$ is the mixed volume of certain simplices, for which a previously-known formula is recorded as the following lemma.

\begin{lemma}\label{lem:HallRado}
For a subset $S\subseteq [n]$, define $\Delta_S \subseteq \R^n$ as the convex hull of $\{\mathbf 0\} \cup \{\mathbf e_i \; | \; i\in S\}$.  Then, for subsets $S_1, \dotsc, S_n$ of $[n]$ (with repetitions allowed), the mixed volume $\text{MV}$ of the corresponding simplices is given by
\[
\text{MV}(\Delta_{S_1}, \dotsc, \Delta_{S_n}) = \begin{cases}
1 & \text{there exists a bijection $\iota: [n] \to [n]$ such that $\iota(i) \in S_i$ for each $i$}\\ 
0 & \text{otherwise}.
\end{cases}
\]
\end{lemma}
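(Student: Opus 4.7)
The strategy is to sandwich the mixed volume between $0$ and $1$ using two classical properties: monotonicity of mixed volumes under inclusion of polytopes, and the classical dimension criterion, which states that $\text{MV}(P_1, \ldots, P_n) > 0$ if and only if $\dim\bigl(\sum_{i \in I} P_i\bigr) \geq |I|$ for every nonempty $I \subseteq [n]$.

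First I would handle the case where no bijection $\iota \colon [n] \to [n]$ with $\iota(i) \in S_i$ exists. By Hall's marriage theorem, the absence of such a bijection is equivalent to the existence of some $I \subseteq [n]$ with $|J| < |I|$, where $J \coloneqq \bigcup_{i \in I} S_i$. For each $i \in I$, the simplex $\Delta_{S_i}$ is contained in the coordinate subspace $\Span\{\e_j : j \in J\}$, so $\dim\bigl(\sum_{i \in I} \Delta_{S_i}\bigr) \leq |J| < |I|$. The dimension criterion then yields $\text{MV}(\Delta_{S_1}, \ldots, \Delta_{S_n}) = 0$.

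When a bijection $\iota$ does exist, I would apply monotonicity in both directions. From $\Delta_{S_i} \subseteq \Delta_{[n]}$,
\[
\text{MV}(\Delta_{S_1}, \ldots, \Delta_{S_n}) \leq \text{MV}(\Delta_{[n]}, \ldots, \Delta_{[n]}) = \Vol(\Delta_{[n]}) = 1,
\]
the last equality being the normalization \eqref{eq:VolT}. From $[0, \e_{\iota(i)}] \subseteq \Delta_{S_i}$,
\[
\text{MV}(\Delta_{S_1}, \ldots, \Delta_{S_n}) \geq \text{MV}([0, \e_{\iota(1)}], \ldots, [0, \e_{\iota(n)}]) = 1,
\]
where the right-hand side follows by computing $\Vol\bigl(\sum_i \lambda_i [0, \e_{\iota(i)}]\bigr)$ as the normalized volume of the coordinate box $\prod_i [0, \lambda_i]$, which equals $n!\,\lambda_1 \cdots \lambda_n$ (the unit cube decomposes into $n!$ unimodular simplices), and then extracting the coefficient of $\lambda_1 \cdots \lambda_n$ in the mixed-volume expansion. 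Combining the two bounds gives $\text{MV} = 1$.

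The main point to handle carefully is the normalization convention: the sandwich collapses to exactly $0$ or $1$ precisely when one uses the lattice/unimodular volume $\Vol_T$ from \eqref{eq:VolT} with $\Vol(\Delta_{[n]}) = 1$; with the Euclidean convention, the answer would rescale by a factor of $1/n!$. The dimension criterion and monotonicity are both classical and well documented in standard references on the Brunn--Minkowski theory, so no ingredient is novel here — the content of the lemma is entirely that Hall's condition on the sets $S_i$ is the correct combinatorial translation of the dimension criterion and the monotonicity sandwich.
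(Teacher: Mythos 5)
Your proof is correct, and it takes a genuinely different route from the paper. The paper disposes of this lemma by citation: the standard dictionary between mixed volumes and intersection numbers of nef divisors on toric varieties \cite[Chapter 5.4]{FultonToric}, applied to (poly)stellahedral fans, identifies the statement with \cite[Theorem 1.3 and Lemma 5.2]{EL}, and the paper notes it also follows from \cite[Theorem 5.1]{Postnikov} or \cite[Theorem A(b)]{EFLS}. You instead give a self-contained convex-geometric argument: Hall's marriage theorem converts the absence of a bijection into a subfamily $I$ whose union $J$ has $|J|<|I|$, the Aleksandrov dimension criterion (equivalently, the existence of segments with independent directions) then forces the mixed volume to vanish since $\sum_{i\in I}\Delta_{S_i}$ lies in the coordinate subspace spanned by $\{\e_j : j \in J\}$; and in the presence of a bijection $\iota$, monotonicity sandwiches the mixed volume between $\text{MV}([0,\e_{\iota(1)}],\dotsc,[0,\e_{\iota(n)}])=1$ and $\text{MV}(\Delta_{[n]},\dotsc,\Delta_{[n]})=\Vol(\Delta_{[n]})=1$ in the lattice normalization \eqref{eq:VolT}. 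Both ingredients are classical (they appear in standard references on the Brunn--Minkowski theory, e.g.\ Schneider's book), and you correctly flag the one delicate point, namely the normalization and the polarization factor $n!$ hidden in ``the coefficient of $\lambda_1\cdots\lambda_n$,'' which is exactly where a Euclidean-volume convention would introduce a spurious $1/n!$. What the two approaches buy: yours is elementary, avoids any toric or matroid-theoretic input, and makes transparent that the combinatorial content is precisely Hall's condition matching the positivity criterion for mixed volumes; the paper's is shorter and situates the lemma inside the toric/matroid literature (the cited results of \cite{EL} being strictly more general statements about augmented or polystellahedral geometry) that the rest of the argument already leans on.
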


\begin{proof}
Standard results in toric geometry translate mixed volumes to intersection numbers of nef divisors \cite[Chapter 5.4]{FultonToric}.  Applying this to polystellahedral fans, as done in \cite[Section 2.2]{EL}, one finds that the lemma is a restatement of \cite[Theorem 1.3 and Lemma 5.2]{EL}.  One can also deduce the lemma from \cite[Theorem 5.1]{Postnikov} or from \cite[Theorem A(b)]{EFLS}.
\end{proof}

We are now ready to prove \cref{thm:h-integrals}, whose statement we recall for convenience.

\begin{customthm}{A}
For any collection $S_1, \ldots, S_n \in \decoset$ (with repetitions allowed), we have
\begin{equation}
    \label{eq:TheoremAgoal} \tag{A}
\int_{\Sigma^{\pi}} h_{S_1} \cdots h_{S_n} = |\T_\pi(S_1, \ldots, S_n)|,
\end{equation}
where
\[\T_\pi(S_1, \ldots, S_n) \coloneqq  \left\{ T \in \decosetmax \; \left| \; \begin{matrix} \text{there exists a bijection } \iota:[n] \rightarrow T\\ \text{with } \iota(i) \in S_i \text{ for each }i \end{matrix} \right.\right\}.\]
\end{customthm}
\begin{proof}
The left-hand side of \eqref{eq:TheoremAgoal} is the coefficient of the monomial $a_{S_1} \cdots a_{S_n}$ in the polynomial $I(\aaS)$.  The coefficient of the same monomial in $V(\aaS)$ is, by definition, the sum of mixed volumes
\[\sum_{T \in \decosetmax} \text{MV}\left(\Delta^T_{S_1 \cap T}, \ldots, \Delta^T_{S_n \cap T}\right).\]
For each $T \in \decosetmax$, \Cref{lem:HallRado} states that 
\[\text{MV}\left(\Delta^T_{S_1 \cap T}, \ldots, \Delta^T_{S_n \cap T}\right)= \begin{cases} 1 & \text{ if } T \in \T_\pi(S_1, \ldots, S_n)\\ 0 & \text{ otherwise},\end{cases}\]
so the coefficient of $a_{S_1} \cdots a_{S_n}$ in $V(\aaS)$ is precisely the right-hand side of \eqref{eq:TheoremAgoal}.  Thus, the two sides agree by \cref{thm:multimatroid_integral} and \cref{thm:h-integrals-v2}.
\end{proof}

\section{Intersection numbers of psi-classes}
\label{sec:psiclasses}

One way in which to understand the special role played by the generators $h_S$ in the Chow ring of $\Sigma^\pi$ is to look more closely at the uniform case, in which case $\Sigma^\pi$ is the fan $\Sigma^r_n$ studied in \cite{clader2022wonderful}.  In this section, we prove that the generators $h_S$ in the uniform case are pullbacks of psi-classes under certain forgetful morphisms, analogously to the results of \cite{DR} for the case of Losev--Manin space.  This allows us to reprove some cases of \cref{thm:h-integrals} from a more geometric perspective, assuming some familiarity with the tools and language of moduli of curves.  Throughout what follows, we assume that $\pi$ is uniform with $|E_i|= r$ for each $i$, so we can write
\begin{equation}
    \label{eq:Euniform}
E = \{1^0, 1^1, \ldots, 1^{r-1}\} \sqcup \{2^0, 2^1, \ldots, 2^{r-1}\} \sqcup \cdots \sqcup \{n^0, n^1, \ldots, n^{r-1}\},
\end{equation}
and we assume that $r \geq 2$.

\subsection{Background on the moduli space}

The papers \cite{clader2022permutohedral, clader2022wonderful} study the moduli space $\L^r_n$ parameterizing the following data:
\begin{itemize}
    \item an {\bf $r$-pinwheel} curve $C$, which is a rational curve consisting of a central projective line from which $r$ chains of projective lines (called {\bf spokes}) emanate;
    \item an order-$r$ automorphism $\sigma$ of $C$;
    \item a pair of distinct fixed points $x^\pm \in C$ of $\sigma$;
    \item $n$ labeled $r$-tuples $(z_1^j)_{j \in \Z_r}, \ldots, (z_n^j)_{j \in \Z_r}$ of points $z_i^j \in C$ (called {\bf light points}) satisfying
    \[\sigma(z_i^j) = z_i^{j+1 \!\!\!\mod r}\]
    for each $i,j$, which are allowed to coincide with one another and with $x^{\pm}$;
    \item an additional labeled $r$-tuple $(y^\ell)_{\ell \in \Z_r}$ satisfying
    \[\sigma(y^\ell) = y^{\ell+1 \!\!\! \mod r}\]
    for each $\ell$, whose elements are distinct from one another as well as from $x^\pm$ and $z_i^j$.
\end{itemize}
These marked points are subject to a stability condition, the details of which can be found in \cite[Section 2.1]{clader2022permutohedral}; see Figure~\ref{fig:Lrn} for an example element.  Note that via the expression \eqref{eq:Euniform} we can view the light points as indexed by elements of $E$, and the light points on any given spoke form a colored set.

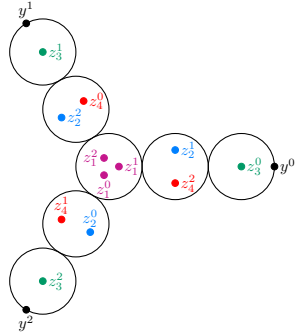
\begin{figure}[h!]
\begin{center}
    \begin{tikzpicture}[scale=0.88,  every node/.style={scale=0.6}]
    \filldraw[magenta!80!blue] (0.15,0) circle (0.05cm) node[right]{$z^1_1$};
    \filldraw[magenta!80!blue] (-0.075,0.1299) circle (0.05cm) node[left]{$z^2_1$};
    \filldraw[magenta!80!blue] (-0.075,-0.1299) circle (0.05cm) node[below]{$z^0_1$};
    \filldraw[blue!50!cyan](1,0.25) circle (0.05cm) node[right]{$z^1_2$};
    \filldraw[blue!50!cyan](-0.7165, 0.741) circle (0.05cm) node[right]{$z^2_2$};
    \filldraw[blue!50!cyan](-0.283, -0.991) circle (0.05cm) node[above]{$z^0_2$};
    \filldraw[red](1,-0.25) circle (0.05cm) node[right]{$z^2_4$};
    \filldraw[red](-0.383,0.991) circle (0.05cm) node[right]{$z^0_4$};
    \filldraw[red](-0.717,-0.8) circle (0.05cm) node[above]{$z^1_4$};
    \filldraw[blue!40!green] (2,0) circle (0.05cm) node[right]{$z^0_3$};
    \filldraw[blue!40!green] (-1,1.732) circle (0.05cm) node[right]{$z^1_3$};
    \filldraw[blue!40!green] (-1,-1.732) circle (0.05cm) node[right]{$z^2_3$};
    \draw (0,0) circle (0.5cm);
    \draw (1,0) circle (0.5cm);
    \draw (2,0) circle (0.5cm);
    \filldraw (2.5,0) circle (0.05cm) node[right]{$y^0$};
    \draw (-0.5,0.866) circle (0.5cm);
    \draw (-1,1.732) circle (0.5cm);
    \filldraw (-1.25,2.165) circle (0.05cm) node[above]{$y^1$};
    \draw (-0.5,-0.866) circle (0.5cm);
    \draw (-1,-1.732) circle (0.5cm);
    \filldraw (-1.25,-2.165) circle (0.05cm) node[below]{$y^2$};
        \end{tikzpicture}
\end{center}
\caption{A sample element of $\L^3_4$, where each circle represents a projective line and $\sigma$ is the rotational automorphism.  Not pictured are the marked points $x^+$ and $x^-$, which are the two fixed points of $\sigma$ and must both lie on the central component.}
\label{fig:Lrn}
\end{figure}

When $r=2$, the moduli space $\L^r_n$ is the toric moduli space constructed by Batyrev--Blume \cite{BB1, BB2}, which is the toric variety $X_{B_n}$ associated to the type-$B$ permutohedral fan $\Sigma^2_n$.  When $r >2$, on the other hand, the moduli space is no longer toric, so in particular it no longer coincides with $X_{\Sigma^r_n}$.  Nevertheless, the main result of \cite{clader2022wonderful} is that $\L^r_n$ can be viewed as a wonderful compactification (the closure of a very affine variety) inside $X_{\Sigma^r_n}$, and that the inclusion induces an isomorphism
\[A^*(\Sigma^r_n) \cong A^*(\L^r_n).\]
Furthermore, for any chain $\scC$ of colored subsets of $E$, the class of the torus-invariant stratum in $X_{\Sigma^r_n}$ corresponding to the cone $\sigma_{\scC}$ restricts to a {\bf boundary stratum} $\overline{S}_{\scC} \subseteq \Lrn$. Roughly, if $\scC$ is a chain of length $k$, then $\overline{S}_{\scC}$ is the closure of the locally closed subvariety $S_{\scC}$ consisting of curves in which each spoke has length $k$ and the distribution of marked points is specified by $\scC$; see \cite[Section 4]{clader2022permutohedral} for the precise definition.

In particular, the generator $x_S \in A^*(\Sigma^r_n)$ restricts to the boundary divisor $[X_S] \in A^*(\Lrn)$, which is the class of the closure of the locus $X_S$ of curves in which each spoke has length one and the light marked points on the $y^0$-spoke are precisely those indexed by $S$.  The generators $h_S$, on the other hand, restrict to pullbacks of certain psi-classes.  To explain this, we introduce the moduli space $\Mbar^1_S$, which parameterizes the following data:
\begin{itemize}
    \item a curve $C$ that consists of a chain of projective lines;
    \item a pair of distinct points $x^\pm \in C$;
    \item a tuple $(z_i)_{i \in S}$ of points $z_i \in C$ (again called light points), which are allowed to coincide with one another and with $x^\pm$;
    \item an additional marked point $y \in C$, which is not allowed to coincide with any other marked point.
\end{itemize}
These marked points, again, are subject to a stability condition made precise in \cite[Section 3.1]{clader2022permutohedral}; to state it succinctly, one can view $\Mbar^1_S$ as a Hassett space $\Mbar_{0, \bfw}$ with weight vector
\[\bfw = \left(\frac{1}{2} + \epsilon, \frac{1}{2} + \epsilon, \underbrace{\epsilon, \ldots, \epsilon}_{|S| \text{ copies}}, 1\right)\]
for $0<\epsilon<1/(|S|+2)$.  This, in particular, forces that $x^\pm$ lie together on one end-component of the chain $C$ and $y$ lies on the opposite end-component.

\begin{remark}
\label{rem:M(T)geometric}
We briefly digress to notice that, for any $T \in \decosetmax$, the toric variety $X_{\Sigma_T}$ associated to the \annoyingfanname (defined in \cref{rem:M(T)fan}) can be identified with an open subvariety $U_T \subseteq \Mbar^1_T$.
When one observes that $\Mbar^1_T$ can be identified with the toric variety of the stellahedral fan,
this open inclusion is the inclusion of toric varieties corresponding to the inclusion of the \annoyingfanname, in the stellahedral fan.
To provide a modular description, for any element of $\Mbar^1_T$, let $C_0 \subseteq C$ be the component containing $x^{\pm}$, and choose coordinates $C_0$ in which $x^+ = 1$, $x^- = -1$, and the unique node of $C_0$ (or the point $y$, if there is no node) is equal to $0$.  Define $U_T$ to consist of those curves for which, in these coordinates, one has $z_i \neq \infty$ for each $i$.  Via these coordinates, $U_T$ can be viewed as an iterated blow-up of $\A^n$ along the loci where $k$ coordinates are equal to zero, in decreasing order from $k=n$ to $k=2$, and this gives an identification $U_T = X_{\Sigma_T}$.

In light of this observation, the inclusion of fans $\Sigma_T \hookrightarrow \Sigma^r_n$ induces an inclusion
\[U_T \hookrightarrow X_{\Sigma^r_n},\]
and the intersection of $U_T$ with $\Lrn \hookrightarrow X_{\Sigma^r_n}$ is the union of the locally closed boundary strata $S_{\scC}$ for $\scC$ a chain of subsets of $T$.  As $T$ varies, these intersections $U_T \cap \L^r_n$ cover $\Lrn$, so this observation can be viewed as the moduli-theoretic analogue of the covering of $\Sigma^\pi$ by the fans $\Sigma_T$ described in \cref{rem:M(T)fan} and the covering of $\IPC(\bfM)$ by the polytopes $\IP(\bfM(T))$ described in \cref{rem:M(T)polytope}.
\end{remark}

Returning to the definition of $h_S$ via psi-classes, recall that one can define psi-classes on any Hassett space as the first Chern classes of the cotangent line bundles at the marked points. 
In the case of $\Mbar^1_S$, this in particular yields a class
\[\psi_y \coloneqq c_1(\mathbb{L}_y) \in A^1(\Mbar^1_S),\]
where $\mathbb{L}_y$, roughly speaking, is the line bundle on $\Mbar^1_S$ whose fiber at a marked curve $C$ is the cotangent line to $C$ at the point $y$.  More precisely, $\mathbb{L}_y = \sigma_y^*\omega_{\mathcal{C}^1_S/\Mbar^1_S}$, where $\mathcal{C}^1_S \rightarrow \Mbar^1_S$ is the universal curve and $\sigma_y: \Mbar^1_S \rightarrow \mathcal{C}^1_S$ is the section corresponding to the marked point $y$.

Now, any $S \in \decoseto$ induces a morphism
\[F_S \colon \L^r_n \rightarrow \Mbar^1_{S}\]
that forgets all of the marked points in $C$ except for $x^\pm$, $y^0$, and the light marked points $z_i^j$ indexed by $i^j \in S$. Equipped with the morphism $F_S$, we claim that the generators $h_S$ can be described as follows.

\begin{lemma}
\label{lem:hS-pullback-psi}
    Under the isomorphism $A^*(\Sigma^r_n) \cong A^*(\L^r_n)$ given by $x_S \mapsto [X_S]$, one has
    \[h_S = F_S^*(\psi_y).\]
\end{lemma}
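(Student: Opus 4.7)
The plan is to compute $F_S^*(\psi_y)$ explicitly as a $\Z$-linear combination of boundary divisors of $\Lrn$ and match the result with $h_S = \sum_{S' \cap S \neq \emptyset} [X_{S'}]$. I expect to do this in two stages: first, prove a combinatorial formula expressing $\psi_y$ on $\Mbar^1_S$ as a sum of boundary divisors; second, describe the pullback of each such boundary divisor under the forgetful map $F_S$.

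For each nonempty subset $J \subseteq S$, let $D^S_J$ denote the boundary divisor of $\Mbar^1_S$ parameterizing chains in which $y$ lies on a separate bubble from $x^{\pm}$, carrying precisely the light points indexed by $J$. The first claim I would establish is that $\psi_y = \sum_{\emptyset \neq J \subseteq S} D^S_J$ in $A^1(\Mbar^1_S)$, by induction on $|S|$. The base case $|S|=1$ is a direct cross-ratio calculation on $\Mbar^1_{\{i^j\}} \cong \mathbb{P}^1$, where $\psi_y$ is a degree-one class and there is a unique boundary point. For the inductive step, I would fix some $i^j \in S$, pass to the Hassett forgetful morphism $\pi\colon \Mbar^1_S \to \Mbar^1_{S \setminus \{i^j\}}$, and apply the Kontsevich-type comparison $\psi_y = \pi^* \psi_y + D^S_{\{i^j\}}$. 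Combining this with the standard pullback identity $\pi^* D^{S \setminus \{i^j\}}_J = D^S_J + D^S_{J \cup \{i^j\}}$ and the inductive hypothesis yields the claim after a telescoping simplification.

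The second stage is to verify that, under $F_S$, a boundary divisor $[X_{S'}]$ of $\Lrn$ maps birationally and transversally onto $D^S_{S' \cap S}$ when $S' \cap S \neq \emptyset$, and is contracted otherwise. This follows from the explicit moduli-theoretic descriptions of $[X_{S'}]$ (curves whose $y^0$-spoke has length one and carries exactly the light points indexed by $S'$) and of $F_S$ (which forgets the cyclic structure together with all light points outside $S$). Consequently $F_S^* D^S_J = \sum_{S' \cap S = J} [X_{S'}]$, and summing over $J$ gives $F_S^*(\psi_y) = \sum_{\emptyset \neq J \subseteq S} \sum_{S' \cap S = J} [X_{S'}] = h_S$. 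The main obstacle will be verifying the transversality and the pullback multiplicities in this last step, especially the interaction between the cyclic $\Z/r$-action on $\Lrn$ and the Hassett stability conditions on $\Mbar^1_S$; I anticipate handling this via the wonderful-compactification embedding $\Lrn \hookrightarrow X_{\Sigma^r_n}$, which identifies the pullback with one of toric Cartier divisors under a morphism of fans and thereby reduces the multiplicity computation to a lattice-combinatorial one.
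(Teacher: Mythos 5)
Your proposal is correct and follows essentially the same route as the paper: the identity $\psi_y=\sum_{\emptyset\neq J\subseteq S}D^S_J$ proved by induction via the Hassett forgetful map and the comparison/pullback formulas, followed by the pullback identity $F_S^*D^S_J=\sum_{S'\cap S=J}[X_{S'}]$ and summation. The only cosmetic differences are your base case $|S|=1$ (the paper starts at $|S|=0$, where $\Mbar^1_{\emptyset}\cong\Mbar_{0,3}$ is a point) and your proposed toric detour for the multiplicities, which the paper treats as immediate from the definitions of the boundary divisors.
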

\begin{proof}
Similarly to the boundary divisors on $\L^r_n$, there are boundary divisors $D^1_T \in A^1\left(\Mbar^1_S\right)$ for each nonempty subset $T \subseteq S$.  Namely, $D^1_T$ is the class of closure of the locus of curves consisting of two components, one containing the marked points $y$ and $z_i$ with $i \in T$, and the other containing the remaining marked points. 

We claim, first, that
\begin{equation}
    \label{eq:psiyBD}
\psi_y = \sum_{\emptyset \neq T \subseteq S} D^1_T.
\end{equation}
The proof is by induction on $|S|$.   When $|S|= 0$, the moduli space $\Mbar^1_S \cong \Mbar_{0, 3}$ is a single point, so both sides of \eqref{eq:psiyBD} are zero for dimension reasons.  Suppose, now, that \eqref{eq:psiyBD} holds on $\Mbar^1_{S'}$ with $|S'| = |S|-1$.  To prove that it holds on $\Mbar^1_{S}$, consider the forgetful map
\[f\colon \Mbar^1_{S} \rightarrow \Mbar^1_{S \setminus \{\star\}}\]
given by forgetting one of the light points $z_\star$ and stabilizing.  A standard comparison argument (see e.g., \cite[Lemma 1.3.1]{PSI}) shows that the following equation holds in $A^*\left(\Mbar^1_{S}\right)$:
\begin{equation}
   \label{eq:psiycomparison}
f^*\psi_y = \psi_y - D^1_{\{\star\}},
\end{equation}
in which the $\psi_y$ on the left-hand side lies on $\Mbar^1_{S\setminus \{\star\}}$ and the $\psi_y$ on the right-hand side lies on $\Mbar^1_{S}$.  (The idea of the proof of \eqref{eq:psiycomparison} is that the line bundles $\mathbb{L}_y$ and $f^*\mathbb{L}_y$ agree away from the locus of curves that are stabilized under $f$, which is precisely the subvariety whose class is $D^1_{\{\star\}}$.) On the other hand, the boundary divisors are related under $f$ by
\begin{equation}
    \label{eq:DTcomparison}
f^*D^1_T = D^1_T + D^1_{T \cup \{\star\}}
\end{equation}
for any nonempty subset $T \subseteq S \setminus \{\star\}$.  Pulling back both sides of the equation \eqref{eq:psiyBD} on $\Mbar^1_{S \setminus \{\star\}}$ under $f$ and applying equations \eqref{eq:psiycomparison} and \eqref{eq:DTcomparison}, one obtains
\[\psi_y -D^1_{\{\star\}} = \sum_{\emptyset \neq T \subseteq S \setminus \{\ast\}} D^1_T +  D^1_{T \cup \{\star\}}. \]
Rearranging this equation yields the equation \eqref{eq:psiyBD} on $\Mbar^1_S$.

Having proven \eqref{eq:psiyBD}, we deduce the lemma by pulling back both sides under $F_S\colon \L^r_n \rightarrow \Mbar^1_S$.  Namely, it is straightforward from the definitions of the boundary divisors to see that 
\[F_S^* (D^1_T) = \sum_{R \cap S = T} [X_R],\]
so \eqref{eq:psiyBD} yields
\[F_S^*(\psi_y) = \sum_{\emptyset \neq T \subseteq S} \sum_{R \cap S = T} [X_R] = \sum_{R \cap S \neq \emptyset} [X_R],\]
which is precisely $h_S$ under the identification of $[X_R] \in A^1(\Lrn)$ with $x_R \in A^1(\Sigma^r_n)$.
\end{proof}

\begin{remark}
    One might wonder why we do not also consider classes $F_S^*(\psi_{x^\pm})$ or $F_S^*(\psi_{z_i})$, which can also be defined for any $S$.  But in fact, one can show that
    \[F_S^*(\psi_{x^\pm}) = 0 \;\;\; \text{ and } \;\;\; F_S^*(\psi_{z_i}) = -h_{\{i\}} \text{ for any } i \in S,\]
    so no new divisors on $\Lrn$ are obtained in this way.  The key point in the proof of these observations is that the analogues for $\psi_{x^\pm}$ and $\psi_{z_i}$ of equation \eqref{eq:psiycomparison} are
    \[f^*\psi_{x^\pm} = \psi_{x^\pm} \;\;\; \text{ and } \;\;\; f^*\psi_{z_i} = \psi_{z_i},\]
    since the marked points $x^\pm$ and $z_i$ never lie on a component contracted by $f$.  Iterating this observation and using that $\psi_{x^\pm} = 0$ on $\Mbar^1_\emptyset$ for dimension reasons shows that $\psi_{x^\pm} = 0$ on every $\Mbar^1_S$. On the other hand, one can check (for example, using \cite[Lemma 2.7]{Sharma}) that $\psi_{z_i} = -\psi_y$ on $\Mbar^1_{\{1\}}$, from which the equation $F_S^*(\psi_{z_i}) = -h_{\{i\}}$ follows.
\end{remark}

\subsection{Geometric perspective on Theorem~\ref{thm:h-integrals}}
\label{subsec:psiclass}

Given the perspectives on $x_S$ via boundary divisors on $\Lrn$ and $h_S$ via psi-classes, one can prove at least some cases of \cref{thm:h-integrals} using geometric techniques from the study of moduli of curves.  Although we were not able to prove \cref{thm:h-integrals} in full generality using these techniques, we believe that it is an illuminating perspective that deserves further exploration, so we illustrate the ideas in this last subsection.  Throughout what follows, we identify $x_S$ and $h_S$ with their images under the isomorphism $A^*(\Sigma^r_n) \cong A^*(\Lrn)$, so that $x_S$ is the boundary divisor associated to $S$ and $h_S$ is defined via these boundary divisors by \eqref{eq:hS}, or equivalently (via \cref{lem:hS-pullback-psi}) it is given by $h_S = F_S^*\psi_y$.

We begin with a lemma that follows directly from the relations in $A^*(\Sigma^r_n)\cong A^*(\Lrn)$.  Here, for every $S \in \decoset$, we denote by $\underline{S}$ the image of $S$ in $[n]$---in other words, the set obtained from $S$ by forgetting the superscripts.

\begin{lemma} \label{lem:xTtimeshS} Let $k \in [n]$, and let $S \in \decoset$ be such that $k \in \underline{S}$.  Then
\[h_{\{k^j\}} \cdot x_S = 0\]
for all $j \in \Z_r$.
\end{lemma}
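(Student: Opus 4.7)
The plan is to exploit the linear relations in $\mathcal{J}$ (from \cref{prop:Chowring}) to reduce to an $h$-class supported on a color-label not appearing in $S$, at which point the quadratic relations $\mathcal{I}$ immediately force the product to vanish.

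First I would observe that the relation $\sum_{S' \ni e} x_{S'} = \sum_{S' \ni e'} x_{S'}$ for distinct $e, e' \in E_k$ is precisely the statement $h_{\{e\}} = h_{\{e'\}}$, once we expand the definition $h_{\{k^j\}} = \sum_{S' \ni k^j} x_{S'}$. Thus $h_{\{k^j\}}$ is independent of $j \in \Z_r$, so in proving $h_{\{k^j\}} \cdot x_S = 0$ we are free to choose any superscript on $k$ that we like.

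Since $k \in \underline{S}$, there is a unique $i \in \Z_r$ with $k^i \in S$. Because the partition is uniform with $r \geq 2$, we may pick some $i' \in \Z_r \setminus \{i\}$, so that $k^{i'} \notin S$. It then suffices to show $h_{\{k^{i'}\}} \cdot x_S = 0$, i.e.\ that $x_{S'} \cdot x_S = 0$ in $A^*(\Sigma^\pi)$ for every colored $S' \ni k^{i'}$. The key observation is that any such $S'$ is incomparable to $S$: if $S \subseteq S'$ then $S'$ would contain both $k^i$ (inherited from $S$) and $k^{i'}$, contradicting that $S'$ is $\pi$-colored; symmetrically, $S' \subseteq S$ would force $k^{i'} \in S$, again a contradiction. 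Hence each $x_{S'} x_S$ lies in the quadratic ideal $\mathcal{I}$ of \eqref{eq:quadratic} and vanishes.

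There is no real obstacle here beyond spotting that the linear relation $\mathcal{J}$ lets us shift the superscript $j$ to a value $i'$ not represented in $S$; once that is done, the quadratic relation $\mathcal{I}$ does all of the work, and one never needs to invoke the moduli-theoretic interpretation of $h_S$ as a pulled-back psi-class. (That said, one could alternatively give a geometric proof via \cref{lem:hS-pullback-psi}: the condition $k \in \underline{S}$ means that on each curve in the boundary stratum $X_S$, a light point indexed by $k$ lies on the $y^0$-spoke, and by symmetry of the cyclic action one may argue that $F_{\{k^j\}}$ restricted to $X_S$ factors through a base whose $\psi_y$ pulls back trivially; but the algebraic proof above is cleaner.)
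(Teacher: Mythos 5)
Your proof is correct and follows essentially the same route as the paper: use the linear relations to replace $h_{\{k^j\}}$ by $h_{\{k^{i'}\}}$ for a superscript $i'$ with $k^{i'}\notin S$, then note that every colored $S'\ni k^{i'}$ is incomparable with $S$, so each product $x_{S'}x_S$ vanishes by the quadratic relations.
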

\begin{proof} By assumption we have $k^i \in S$ for some $i\in \Z_r$, and without loss of generality, we can assume $i\neq j$, since the linear relation \eqref{eq:linear} in $A^*(\Lrn)$ implies $h_{\{k^j\}}=h_{\{k^{j'}\}}$.  Thus, we have $k^j \notin S$, so the quadratic relations \eqref{eq:quadratic} for $A^*(\Lrn)$ imply that $x_{S'} x_S = 0$ for all $S' \in \decoset$ containing $k^j$.  Since
\[h_{\{k^j\}} = \sum_{k^j \in S'} x_{S'},\]
it follows that $h_{\{k^j\}}x_S=0$, as claimed.
\end{proof} 

This lemma already allows us to give a geometric proof of \cref{thm:h-integrals} in the case where the sets $S_1, \ldots, S_n \in \decoset$ define a maximal chain.

\begin{proposition}
\label{prop:strict}
    Let $(S_1 \subsetneq S_2 \subsetneq \dots \subsetneq S_n) \in \maxchain$.     Then
    \[\int_{\L^r_n} h_{S_1} h_{S_2} \cdots h_{S_n} = 1,\]
    so in particular, \cref{thm:h-integrals} holds in this case.
\end{proposition}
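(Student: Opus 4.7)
My plan is to argue by induction on $n$, using \cref{lem:xTtimeshS} to reduce the computation from $\L^r_n$ to $\L^r_{n-1}$ via the forgetful morphism $q\colon\L^r_n\to\L^r_{n-1}$ that drops one entire orbit of light points. The base case $n=1$ is immediate: in that case $\decoseto$ consists only of singletons, so $h_{S_1}=x_{S_1}$, and $\int_{\Sigma^\pi} x_{S_1}=1$ by the defining property of the degree map on the balanced tropical fan $\Sigma^\pi$.

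For the inductive step, write $S_1=\{e_1\}$ with $e_1$ of color $k_1\in[n]$, set $S_i':=S_i\setminus\{e_1\}$ for $i\ge 2$, and let $q\colon\L^r_n\to\L^r_{n-1}$ be the forgetful morphism that drops the color-$k_1$ orbit $E_{k_1}$. The key algebraic identity I would establish is
\[h_{S_1}\cdot h_{S_2}\cdots h_{S_n} \;=\; h_{S_1}\cdot q^*(h_{S_2'}\cdots h_{S_n'})\qquad\text{in } A^*(\L^r_n).\]
For each $i\ge 2$, \cref{lem:xTtimeshS} implies that any $x_T$ appearing in the expansion of $h_{S_i}$ with $T$ containing a color-$k_1$ element is annihilated by $h_{S_1}$, so $h_{S_1}\cdot h_{S_i}$ equals $h_{S_1}$ times the partial sum of those $x_T$ with $T\subseteq E\setminus E_{k_1}$ and $T\cap S_i'\neq\varnothing$. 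Using the factorization $F_{S_i'}=F_{S_i'}^{\L^r_{n-1}}\circ q$ together with the identification $h_S=F_S^*\psi_y$ from \cref{lem:hS-pullback-psi}, this partial sum equals $q^*h_{S_i'}$, and iterating over all $i\ge 2$ yields the displayed identity.

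Applying the projection formula then gives
\[\int_{\L^r_n} h_{S_1}\cdots h_{S_n} \;=\; q_*(h_{S_1})\cdot\int_{\L^r_{n-1}} h_{S_2'}\cdots h_{S_n'},\]
and since $(S_2'\subsetneq\cdots\subsetneq S_n')$ is a maximal chain of $\pi'$-colored subsets in the smaller ground set $E\setminus E_{k_1}$, the inductive hypothesis makes the right-hand integral equal to $1$, reducing the problem to showing $q_*(h_{S_1})=1$. Since $q$ has relative dimension $1$ and $h_{S_1}$ has codimension $1$, the pushforward $q_*(h_{S_1})$ is a multiple of the fundamental class determined by the degree of $h_{S_1}$ on a generic fiber of $q$; such a fiber parameterizes the position of $z_{k_1^0}$, with all other color-$k_1$ points and the $y^\ell$'s then reconstructed by applying iterates of $\sigma$, and the restriction of $F_{S_1}$ to this fiber identifies it birationally with $\Mbar^1_{S_1}$, so that $h_{S_1}=F_{S_1}^*\psi_y$ restricts to a class of degree $\int_{\Mbar^1_{S_1}}\psi_y=1$. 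The main obstacle is carrying out this birationality rigorously---in particular, ruling out extra lifts arising from different primitive generators of the cyclic group $\langle\sigma\rangle$ when $r\ge 3$---which should follow from a moduli-theoretic identification but requires careful bookkeeping.
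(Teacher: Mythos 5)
Your proposal follows the paper's proof of \cref{prop:strict} almost step for step: the induction on $n$, the base case, the use of \cref{lem:xTtimeshS} to replace each $h_{S_i}$ ($i\ge 2$) by a class pulled back along the orbit-forgetting map $q$, and the projection formula reducing everything to $q_*(h_{S_1})=1$ are exactly the paper's argument. One small imprecision (shared by the paper's own phrasing, and harmless): the partial sum $\sum_{T\subseteq E\setminus E_{k_1},\, T\cap S_i'\neq\emptyset}x_T$ is not literally equal to $q^*h_{S_i'}$, since the pullback also contains terms $x_T$ with $k_1\in\underline{T}$; but those terms are annihilated by $h_{S_1}$ by \cref{lem:xTtimeshS}, so the displayed identity $h_{S_1}h_{S_2}\cdots h_{S_n}=h_{S_1}\,q^*(h_{S_2'}\cdots h_{S_n'})$ is correct as you use it.

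Where you genuinely diverge is the evaluation of $q_*(h_{S_1})$, and that is also where your write-up stops short. Two remarks. First, the obstacle you flag is not a real one: the automorphism $\sigma$ itself (not merely the subgroup $\langle\sigma\rangle$) is part of the moduli data and is unchanged by $q$, so the fiber over a generic configuration is parameterized by the position of the single point $z_{k_1}^0$, the rest of the orbit being its $\sigma$-translates; the restriction of $F_{S_1}$ to this fiber records $\sigma^j(z_{k_1}^0)$ and is a degree-one map to $\Mbar^1_{S_1}\cong\P^1$, so no extra lifts arise and your fiber computation does give $q_*(h_{S_1})=1$ (though justifying that the degree on a generic fiber computes the pushforward still needs a sentence or two). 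Second, the paper closes this step more economically and without fibers: expand $h_{S_1}=x_{S_1}+\sum_{S\supsetneq S_1}x_S$; then $q$ restricted to the boundary divisor $X_{S_1}$ is an isomorphism onto $\L^r_{n-1}$ (a fact quoted from the earlier work on these moduli spaces), so $x_{S_1}$ pushes forward to $1$, while for $|S|\ge 2$ the restriction of $q$ to $X_S$ has lower-dimensional image, so those terms push forward to $0$. Substituting this for your fiber argument removes the ``careful bookkeeping'' you were worried about; alternatively, your psi-class/fiber computation is a valid geometric variant once the identification of the generic fiber is spelled out as above.
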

\begin{proof}
    The proof is by induction on $n$.  As a base case, suppose that $n=1$.  Then the integral in question is
    \[\int_{\L^r_1} h_S,\]
    in which $S$ is a singleton.  But $h_S = x_S$ when $n=1$, so we have
    \[\int_{\L^r_1} h_S = \int_{\L^r_1} x_S,\]
    which equals $1$ because $\L^r_1 \cong \P^1$ and the boundary divisor $x_S$ is the class of a single point.

    Now, suppose that the lemma holds on $\L^r_{n-1}$.  The hypothesis that the chain is maximal implies that $S_1 = \{k^j\}$ for some $k^j \in E$, and we claim that, if $S_i'\coloneqq  S_i \setminus \{k^j\}$ for each $i \in [n]$, then
    \begin{equation}
        \label{eq:induction}
   \int_{\L^r_n} h_{S_1} \cdots h_{S_n} = \int_{\L^r_{n-1}} h_{S_2'} \cdots h_{S_n'}.
    \end{equation}
    If we can prove this, then the proposition will follow by the induction hypothesis.

    To prove \eqref{eq:induction}, we first note that, whenever $S \in \decoset$ is such that $k \in \underline{S}$, \cref{lem:xTtimeshS} implies
    \[h_{S_1} x_S = h_{\{k^j\}} \cdot x_S = 0.\]
    It follows that, in the product $h_{S_1}h_{S_2} \cdots h_{S_n}$, one can replace $h_{S_i}$ for each $i\in \{2, \ldots, n\}$ by 
    \[\sum_{\substack{S \cap S_i \neq \emptyset \\ k \notin \underline{S}}} x_{S} =\phi^*\left(h_{S_i'}\right),\]
    where $\phi \colon \Lrn \rightarrow \L^r_{n-1}$ is the forgetful map forgetting the light orbit indexed by $k$.  That is,
    \[h_{S_1} h_{S_2} \cdots h_{S_n} = h_{S_1} \phi^*\big(h_{S_2'} \cdots h_{S_n'}\big).\]
    It therefore follows from the projection formula that 
    \[\int_{\L^r_n} h_{S_1} \cdots h_{S_n} = \int_{\L^r_{n-1}} \phi_*(h_{S_1}) h_{S_2'} \cdots h_{S_n'},\]
    so to prove \eqref{eq:induction}, it suffices to prove that $\phi_*(h_{S_1}) = 1$.  To see this, express
\[h_{S_1} = x_{S_1} + \sum_{\substack{S\supseteq S_1\\ |S|\ge 2}} x_S.\]
Then
\[\phi_*(h_{S_1}) = (\phi|_{X_{S_1}})_*\left(\mathbf{1}_{X_{S_1}}\right) + \sum_{\substack{S\supseteq S_1\\ |S|\ge 2}}(\phi|_{X_S})_*\left(\mathbf{1}_{X_T}\right),\]
 where we recall that $X_S \subseteq \Lrn$ is the subvariety such that $[X_S] = x_S \in A^1(\Lrn)$.  Geometrically, one sees that $\phi|_{X_{S_1}} \colon X_{S_1} \rightarrow \L^r_{n-1}$ is an isomorphism, so $(\phi|_{X_{S_1}})_*\left(\mathbf{1}_{X_{S_1}}\right) =1$ (see, for example, \cite[Proposition 5.5]{clader2022permutohedral}).  On the other hand, for each $S$ appearing in the summation above, $\phi|_{X_S} \colon X_S \rightarrow \L^r_{n-1}$ reduces the dimension, so $(\pi|_{X_S})_*\left(\mathbf{1}_{X_S}\right) = 0$.  Thus, we indeed have $\phi_*\left(h_{S_1}\right) = 1$, so the proposition is proved. 
\end{proof}

\begin{remark}
    In fact, the proof of \cref{prop:strict} holds as long as the underlying sets 
 are strictly nested (that is, $\underline{S}_1 \subsetneq \cdots \subsetneq \underline{S}_n$), without necessarily assuming the stronger condition that ${S}_1 \subsetneq \cdots \subsetneq {S}_n$. 
\end{remark}

\begin{remark}
    The equation \eqref{eq:induction} can also be used to prove that \cref{thm:h-integrals} holds in the case where $S_1, \ldots, S_n$ are pairwise disjoint.  The main idea of the argument is to use the relations in $A^*(\Lrn)$ to argue that
    \[\int_{\Lrn} h_{S_1} \cdots h_{S_{n-1}} h_{S_n} = \sum_{a \in S_n} \int_{\Lrn} h_{S_1} \cdots h_{S_{n-1}} h_{\{a\}}\]
    by applying the pairwise disjoint hypothesis, and then to apply \eqref{eq:induction} to rewrite this as
    \[\sum_{a \in S_n} \int_{\L^r_{n-1}} h_{S_1 \setminus \{a\}} \cdots h_{S_n \setminus \{a\}}.\]
    This sets up an induction on $n$ from which \cref{thm:h-integrals} immediately follows.
\end{remark}

Building off of \cref{prop:strict}, one can prove \cref{thm:h-integrals} for collections $S_1, \ldots, S_n \in \decoset$ that are nested but not strictly nested.  The key ingredient in the proof of this generalization is the following geometric lemma.

\begin{lemma}
\label{lem:hSsquared}
Let $S \in \decoset$, and choose any element $\star \in S$.  Then
    \[(h_S)^2 = \begin{cases} h_Sh_{S \setminus \{\star\}} & \text{ if } |S|>1,\\ 0 & \text{ if } |S|=1\end{cases}.\]
\end{lemma}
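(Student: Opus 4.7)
The plan is to deduce the lemma from the psi-class interpretation $h_S = F_S^*(\psi_y)$ furnished by \cref{lem:hS-pullback-psi}, together with the standard comparison formula for psi-classes under forgetful morphisms. For the case $|S|=1$, the argument is immediate by dimension: the Hassett space $\Mbar^1_{\{\star\}}$ parameterizes four marked points ($x^\pm$, $z_\star$, $y$) on a rational curve and therefore has dimension one, so $\psi_y^2 = 0$ on $\Mbar^1_{\{\star\}}$ for dimension reasons. Pulling back via $F_S$ then gives $h_S^2 = F_S^*(\psi_y^2) = 0$.

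For $|S|>1$, I would fix $\star \in S$ and consider the forgetful morphism $f\colon \Mbar^1_S \to \Mbar^1_{S\setminus\{\star\}}$ that drops the light point $z_\star$. Since $F_{S\setminus\{\star\}} = f \circ F_S$, the comparison formula \eqref{eq:psiycomparison} yields
\[h_{S\setminus\{\star\}} = F_S^*(f^*\psi_y) = F_S^*(\psi_y - D^1_{\{\star\}}) = h_S - F_S^*(D^1_{\{\star\}}).\]
Multiplying both sides by $h_S = F_S^*(\psi_y)$ shows that
\[h_S \cdot (h_S - h_{S\setminus\{\star\}}) = F_S^*\bigl(\psi_y \cdot D^1_{\{\star\}}\bigr),\]
so the desired identity reduces to the single vanishing $\psi_y \cdot D^1_{\{\star\}} = 0$ in $A^*(\Mbar^1_S)$.

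The step I expect to require the most care is establishing this last vanishing, which is geometric in nature. A generic element of $D^1_{\{\star\}}$ is a curve with two components joined at a node, where the tail component carries only the marked points $y$, $z_\star$, and the node; this tail is a $\P^1$ with three fixed distinct distinguished points and is therefore rigid, so the cotangent line at $y$ should be constant along $D^1_{\{\star\}}$. To make this rigorous, I would realize $D^1_{\{\star\}}$ as the image of the gluing morphism $\iota\colon \Mbar^1_{S \setminus \{\star\}} \to \Mbar^1_S$ that attaches such a tail at $y$, verify that $\iota^*\psi_y = 0$ because the attached tail varies in a zero-dimensional moduli, and conclude via the self-intersection formula $\psi_y \cdot [D^1_{\{\star\}}] = \iota_*(\iota^*\psi_y) = 0$. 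The only subtlety is confirming that this identification is compatible with the Hassett weight data on $\Mbar^1_S$, but this follows from the observation that the tail with its three distinct marked points $\{y,z_\star,\text{node}\}$ satisfies the stability condition and contributes no moduli.
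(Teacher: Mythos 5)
Your proof is correct and follows essentially the same route as the paper for the main case $|S|>1$: write $h_S = F_S^*(\psi_y)$, apply the comparison formula \eqref{eq:psiycomparison} through $F_{S\setminus\{\star\}} = f \circ F_S$, and reduce to the vanishing $\psi_y \cdot D^1_{\{\star\}} = 0$, which you justify by the gluing-morphism/rigid-tail argument that merely formalizes the paper's observation that $y$ lies on a three-pointed rational tail. The only divergence is the $|S|=1$ case, where you use that $\Mbar^1_{\{\star\}}$ is one-dimensional so $\psi_y^2 = 0$, whereas the paper instead invokes \cref{lem:xTtimeshS}; both arguments are valid.
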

\begin{proof}
When $|S|=1$, the result follows from \cref{lem:xTtimeshS}. Now, suppose that $|S| >1$.  Choose any element $\star \in S$, and set
\[S'\coloneqq  S \setminus \{\star\}.\]
By \cref{lem:hS-pullback-psi}, we have $h_S = F_S^*(\psi_y)$ and $h_{S'} = F_{S'}^*(\psi_y)$.
Furthermore, if $f: \Mbar^1_S \rightarrow \Mbar^1_{S'}$ is the map forgetting the light marked point $z_\star$, we have $F_{S'} = f \circ F_S$.  Applying \eqref{eq:psiycomparison} then shows
\begin{align*}
    h_Sh_{S'} &= F_S^*(\psi_y)F_S^*(f^*(\psi_y))\\
    &=F_S^*(\psi_y) F_S^*(\psi_y-D^1_{\{\star\}})\\
    &=F_S^*(\psi_y)^2 - F_S^*(\psi_y \cdot D^1_{\{\star\}})\\
    &=(h_S)^2 - F_S^*(\psi_y \cdot D^1_{\{\star\}}).
\end{align*}
However, we have $\psi_y \cdot D^1_{\{\star\}} = 0$, because in the divisor $D^1_{\{\star\}}$, the marked point $y$ lies on a genus-zero component with only three special points, so its cotangent line bundle is trivial.  Therefore, $h_Sh_{S'} = (h_S)^2$, as claimed.
\end{proof}
\begin{proposition}
\label{prop:nonstrictlynested}
Let $S_1, \ldots, S_n \in \decoseto$ be such that $S_1 \subseteq S_2 \subseteq \cdots \subseteq S_n$.  Then
    \[\int_{\L^r_n} h_{S_1} h_{S_2} \cdots h_{S_n} = \begin{cases} 1 & \text{ if } |S_i| \geq i \text{ for all } i \in [n],\\ 0 & \text{ otherwise}.\end{cases}\]
In particular, \cref{thm:h-integrals} holds in this case.
\end{proposition}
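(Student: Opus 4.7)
The plan is to handle the vanishing case and the nonvanishing case separately.

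\emph{Vanishing case.} Suppose $|S_k| < k$ for some $k \in [n]$. For every $j \leq k$, the inclusion $S_j \subseteq S_k$ allows one to factor the forgetful morphism $F_{S_j} \colon \Lrn \to \Mbar^1_{S_j}$ through $F_{S_k}$ via the forgetful morphism $f_{j,k} \colon \Mbar^1_{S_k} \to \Mbar^1_{S_j}$ that drops the light points indexed by $S_k \setminus S_j$. Combined with \cref{lem:hS-pullback-psi}, this exhibits $h_{S_1} \cdots h_{S_k}$ as $F_{S_k}^*$ of a class in $A^k(\Mbar^1_{S_k})$. Since $\Mbar^1_{S_k}$ is a smooth projective Hassett space of dimension $|S_k| < k$, this Chow group vanishes for dimension reasons, so the partial product $h_{S_1} \cdots h_{S_k}$ is already zero and the full integrand vanishes.

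\emph{Nonvanishing case.} If $|S_i| \geq i$ for all $i$, I would induct on the nonnegative integer $N \coloneqq \sum_{k=1}^n (|S_k| - k)$. When $N = 0$, the sizes force $|S_k| = k$ for every $k$, so the nested chain is strict maximal and the integral equals $1$ by \cref{prop:strict}. For the inductive step, let $j$ be the largest index with $|S_j| > j$. The bound $|S_n| \leq n$ forces $j < n$, and maximality of $j$ together with the hypothesis $|S_k| \geq k$ gives $|S_{j+1}| = j+1$; the nesting $S_j \subseteq S_{j+1}$ then forces $|S_j| = j+1$ and $S_j = S_{j+1}$. Let $i \leq j$ be the smallest index with $S_i = S_j$; by minimality (with the convention $S_0 = \varnothing$), we have $S_{i-1} \subsetneq S_i$, so one can pick $\star \in S_i \setminus S_{i-1}$. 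Applying \cref{lem:hSsquared} to the factor $h_{S_i} h_{S_{i+1}} = h_{S_i}^2$ and rearranging yields
\[h_{S_1} \cdots h_{S_n} = h_{S_1} \cdots h_{S_{i-1}} \cdot h_{S_i \setminus \{\star\}} \cdot h_{S_i} \cdot h_{S_{i+2}} \cdots h_{S_n},\]
which is the product for the new nested chain obtained by replacing $S_i$ with $S_i \setminus \{\star\}$. This chain still satisfies $|S_k'| \geq k$ (the new $i$-th set has size $j \geq i$), remains nested since $\star \notin S_{i-1}$, and has $N$-value one smaller, so the inductive hypothesis closes the argument.

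The main subtlety is the choice of $\star$ in the inductive step: \cref{lem:hSsquared} permits any element of $S_i$, but the new chain is nested only when $\star \notin S_{i-1}$, which is exactly the reason for taking $i$ to be minimal within the constant block $S_i = \cdots = S_{j+1}$. The other hypothesis $|S_i| > 1$ required by \cref{lem:hSsquared} is automatic since $|S_i| = j+1 \geq 2$.
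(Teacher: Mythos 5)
Your proof is correct, and it diverges from the paper's in an interesting way. For the vanishing half, the paper argues by induction on the smallest index $i$ with $|S_i|<i$, repeatedly using \cref{lem:hSsquared} to shrink sets until it reaches the base case $h_{\{\star\}}^2=0$; you instead use the factorization $F_{S_j} = f_{j,k}\circ F_{S_k}$ for $S_j\subseteq S_k$ (the single-point case of which already appears in the proof of \cref{lem:hSsquared}) to write $h_{S_1}\cdots h_{S_k}$ as $F_{S_k}^*$ of a class in $A^k(\Mbar^1_{S_k})$, which vanishes since $\dim \Mbar^1_{S_k}=|S_k|<k$. This is a genuinely different, shorter, and conceptually transparent argument (degree exceeds the dimension of the intermediate Hassett space), at the mild cost of invoking reduction morphisms between Hassett spaces in general rather than one point at a time. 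For the nonvanishing half, your strategy matches the paper's---reduce via \cref{lem:hSsquared} to the strictly nested case handled by \cref{prop:strict}---but your induction parameter $N=\sum_k(|S_k|-k)$ strictly drops by one at each step, whereas the paper inducts on the number of repetitions $r(\scC)$ and must include a cascading ``repeat until the containment becomes strict'' step; your bookkeeping makes termination automatic. Your choices of indices (last violating index $j$, first index $i$ of the constant block, $\star\in S_i\setminus S_{i-1}$) are verified correctly: they guarantee $|S_i|=j+1\ge 2$, preserve nesting and the size condition $|S_k'|\ge k$, and keep all sets in $\decoseto$, so the induction closes.
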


\begin{proof}  First, we prove that the integral equals zero whenever $|S_i| < i$ for some $i \in [n]$; the proof is by induction on the smallest $i$ for which this occurs.  Since each $S_i$ has size at least $1$, the base case is $i=2$: that is, we suppose that $|S_2| <2$.  This means that $S_1 = S_2 = \{\star\}$ for some $\star \in E$, but then 
\[h_{S_1} h_{S_2} = h_{\{\star\}} h_{\{\star\}} = 0\]
by \cref{lem:hSsquared}.

Now, fix $i > 1$, and suppose that the integral equals zero for all chains
\[\scC'= (S_1' \subseteq S_2' \subseteq \cdots \subseteq S_n')\]
of colored sets such that $|S'_{i-1}| < i-1$.  Fix a chain
\[\scC= (S_1 \subseteq S_2 \subseteq \cdots \subseteq S_n)\]
with $|S_{i-1}| = i-1$ but $|S_i|<i$.  This forces that $S_{i-1} = S_i$, and so there exists some $k<i$ with
\[S_{k-1} \subsetneq S_k = S_{k+1} = \cdots = S_{i-1} = S_i.\]
Now, choose any $\star \in S_{k-1} \setminus S_{k}$, and for each $j \in \{k, \ldots, i-1\}$, set $S_{j}'\coloneqq  S_{j} \setminus \{\star\}$.  Then
\[h_{S_{j}}h_{S_i} = h_{S'_{j}}h_{S_i}\]
by \cref{lem:hSsquared}.  It follows that we can replace the chain $\scC$ with the chain
\[\scC'\coloneqq  (S_1 \subseteq \cdots \subseteq S_{k-1} \subseteq S'_k \subseteq \cdots \subseteq S'_{i-1} \subseteq S_i \subseteq \cdots \subseteq S_n)\]
without affecting the integral in question, but the integral for the chain $\scC'$ equals zero by the induction hypothesis.

We have therefore proven that the integral equals zero unless $|S_i| \geq i$ for all $i$, and what remains to be shown is that it equals $1$ when this condition is satisfied.  This proof is again by induction, this time on the number
    \[r(\scC) \coloneqq  \left|\left\{i \in \{1, \ldots, n-1\} \; \big| \; S_{i} = S_{i+1}\right\}\right|\]
of repetitions in $\scC$.

If $r(\scC) = 0$, then $\scC$ is strictly nested and the statement follows from \cref{prop:strict}.  Suppose, then, that $\scC = (S_1 \subseteq \cdots \subseteq S_n)$ is a chain with at least one repetition and that the proposition holds for all chains $\scC'$ with $r(\scC') < r(\scC)$.

Let $i$ be the minimum index such that $S_{i} = S_{i+1}$.  It follows that $S_{i-1} \subsetneq S_{i}$, so there exists $x \in S_i \setminus S_{i-1}$.  Let
\[T_i\coloneqq  S_i \setminus \{x\},\]
which is nonempty by the condition $|S_{i+1}| \geq i+1 \geq 2$.  Then
\[h_{S_{i}}h_{S_{i+1}} = h_{T_{i}}h_{S_{i+1}}.\]
by \cref{lem:hSsquared}, so we can replace the chain $\scC$ by the chain
\[\scC' \coloneqq  (S_1 \subseteq \cdots \subseteq S_{i-1} \subseteq T_i \subseteq S_{i+1} \subseteq \cdots \subseteq S_n)\]
without affecting the integral in question.  As long as $S_{i-1} \subsetneq T_i$, we have $r(\scC') < r(\scC)$ and therefore the integral equals $1$ by the induction hypothesis.  If $S_{i-1} = T_i$, then we we repeat the argument, replacing $S_{i-1}$ by $T_{i-1} \coloneqq  S_{i-1} \setminus \{y\}$ for $y \in S_{i-1} \setminus S_{i-2}$.  This process eventually terminates, so the integral equals $1$ by the inductive hypothesis, completing the proof. \end{proof}

\bibliographystyle{alpha}
\bibliography{bibliography.bib}

\newcommand{\etalchar}[1]{$^{#1}$}
\begin{thebibliography}{LdMRS20}

\bibitem[ABD10]{ABD}
Federico Ardila, Carolina Benedetti, and Jeffrey Doker.
\newblock Matroid polytopes and their volumes.
\newblock {\em Discrete Comput. Geom.}, 43(4):841--854, 2010.

\bibitem[AHK18]{AHK}
Karim Adiprasito, June Huh, and Eric Katz.
\newblock Hodge theory for combinatorial geometries.
\newblock {\em Ann. of Math. (2)}, 188(2):381--452, 2018.

\bibitem[BB11a]{BB1}
Victor Batyrev and Mark Blume.
\newblock The functor of toric varieties associated with {W}eyl chambers and
  {L}osev-{M}anin moduli spaces.
\newblock {\em Tohoku Math. J. (2)}, 63(4):581--604, 2011.

\bibitem[BB11b]{BB2}
Victor Batyrev and Mark Blume.
\newblock On generalisations of {L}osev-{M}anin moduli spaces for classical
  root systems.
\newblock {\em Pure Appl. Math. Q.}, 7(4, Special Issue: In memory of Eckart
  Viehweg):1053--1084, 2011.

\bibitem[BB19]{BakerBowler}
Matthew Baker and Nathan Bowler.
\newblock Matroids over partial hyperstructures.
\newblock {\em Adv. Math.}, 343:821--863, 2019.

\bibitem[BELL23]{BELL}
Joshua Brakensiek, Christopher Eur, Matt Larson, and Shiyue Li.
\newblock Kapranov degrees.
\newblock {\em arXiv preprint arXiv:2308.12285}, 2023.

\bibitem[BES23]{BES}
Spencer Backman, Christopher Eur, and Connor Simpson.
\newblock Simplicial generation of chow rings of matroids.
\newblock {\em Journal of the European Mathematical Society}, 2023.

\bibitem[BEST23]{BEST}
Andrew Berget, Christopher Eur, Hunter Spink, and Dennis Tseng.
\newblock Tautological classes of matroids.
\newblock {\em Invent. Math.}, 233(2):951--1039, 2023.

\bibitem[BHM{\etalchar{+}}22]{BHMPW22}
Tom Braden, June Huh, Jacob~P. Matherne, Nicholas Proudfoot, and Botong Wang.
\newblock A semi-small decomposition of the {C}how ring of a matroid.
\newblock {\em Adv. Math.}, 409:Paper No. 108646, 49, 2022.

\bibitem[Bou87]{bouchet1987}
Andr\'{e} Bouchet.
\newblock Greedy algorithm and symmetric matroids.
\newblock {\em Math. Programming}, 38(2):147--159, 1987.

\bibitem[Bou97]{bouchet1997multimatroids}
Andr\'{e} Bouchet.
\newblock Multimatroids. {I}. {C}overings by independent sets.
\newblock {\em SIAM J. Discrete Math.}, 10(4):626--646, 1997.

\bibitem[BST]{BST}
Andrew Berget, Hunter Spink, and Dennis Tseng.
\newblock Log-concavity of matroid h-vectors and mixed {E}ulerian numbers.
\newblock arXiv:2005.01937.

\bibitem[CDH{\etalchar{+}}23]{clader2022permutohedral}
Emily Clader, Chi~ara Damiolini, Daoji Huang, Shiyue Li, and Rohini Ramadas.
\newblock Permutohedral complexes and rational curves with cyclic action.
\newblock {\em Manuscripta Math.}, 172(3-4):805--856, 2023.

\bibitem[CDLR23]{clader2022wonderful}
Emily Clader, Chiara Damiolini, Shiyue Li, and Rohini Ramadas.
\newblock Wonderful compactifications and rational curves with cyclic action.
\newblock {\em Forum Math. Sigma}, 11:Paper No. e29, 32, 2023.

\bibitem[CF22]{CameronFink}
Amanda Cameron and Alex Fink.
\newblock The {T}utte polynomial via lattice point counting.
\newblock {\em J. Combin. Theory Ser. A}, 188:Paper No. 105584, 35, 2022.

\bibitem[CLS11]{cox2011toric}
David~A Cox, John~B Little, and Henry~K Schenck.
\newblock {\em Toric varieties}, volume 124.
\newblock American Mathematical Soc., 2011.

\bibitem[DR22]{DR}
Jeshu Dastidar and Dustin Ross.
\newblock Matroid psi classes.
\newblock {\em Selecta Math. (N.S.)}, 28(3):Paper No. 55, 38, 2022.

\bibitem[Edm03]{Edmonds}
Jack Edmonds.
\newblock Submodular functions, matroids, and certain polyhedra [mr0270945].
\newblock In {\em Combinatorial optimization---{E}ureka, you shrink!}, volume
  2570 of {\em Lecture Notes in Comput. Sci.}, pages 11--26. Springer, Berlin,
  2003.

\bibitem[EFLS22]{EFLS}
Christopher Eur, Alex Fink, Matt Larson, and Hunter Spink.
\newblock Signed permutohedra, delta-matroids, and beyond.
\newblock {\em arXiv preprint arXiv:2209.06752}, 2022.

\bibitem[EHL23]{EHL}
Christopher Eur, June Huh, and Matt Larson.
\newblock Stellahedral geometry of matroids.
\newblock In {\em Forum of Mathematics, Pi}, volume~11, page e24. Cambridge
  University Press, 2023.

\bibitem[EL23]{EL}
Christopher Eur and Matt Larson.
\newblock Intersection theory of polymatroids.
\newblock {\em arXiv preprint arXiv:2301.00831}, 2023.

\bibitem[ELS23]{eur2023k}
Christopher Eur, Matt Larson, and Hunter Spink.
\newblock K-classes of delta-matroids and equivariant localization.
\newblock {\em arXiv preprint arXiv:2307.02550}, 2023.

\bibitem[EMM13]{ellis2013graphs}
Joanna~A Ellis-Monaghan and Iain Moffatt.
\newblock {\em Graphs on surfaces: dualities, polynomials, and knots},
  volume~84.
\newblock Springer, 2013.

\bibitem[FS97]{FultonSturmfels}
William Fulton and Bernd Sturmfels.
\newblock Intersection theory on toric varieties.
\newblock {\em Topology}, 36(2):335--353, 1997.

\bibitem[Ful93]{FultonToric}
William Fulton.
\newblock {\em Introduction to toric varieties}, volume 131 of {\em Annals of
  Mathematics Studies}.
\newblock Princeton University Press, Princeton, NJ, 1993.
\newblock The William H. Roever Lectures in Geometry.

\bibitem[GGMS87]{GGMS87}
I.~M. Gelfand, R.~M. Goresky, R.~D. MacPherson, and V.~V. Serganova.
\newblock {Combinatorial geometries, convex polyhedra, and Schubert cells}.
\newblock {\em Adv. in Math.}, 63(3):301--316, 1987.

\bibitem[Ham17]{Hampe}
Simon Hampe.
\newblock The intersection ring of matroids.
\newblock {\em J. Combin. Theory Ser. B}, 122:578--614, 2017.

\bibitem[Koc]{PSI}
Joachim Kock.
\newblock Notes on psi classes.
\newblock https://mat.uab.cat/~kock/GW/notes/psi-notes.pdf.

\bibitem[LdMRS20]{LdMRS}
Luc\'{\i}a L\'{o}pez~de Medrano, Felipe Rinc\'{o}n, and Kristin Shaw.
\newblock Chern-{S}chwartz-{M}ac{P}herson cycles of matroids.
\newblock {\em Proc. Lond. Math. Soc. (3)}, 120(1):1--27, 2020.

\bibitem[LLPP22]{LLPP}
Matt Larson, Shiyue Li, Sam Payne, and Nicholas Proudfoot.
\newblock ${K}$-rings of wonderful varieties and matroids.
\newblock {\em arXiv preprint arXiv:2210.03169}, 2022.

\bibitem[LM00]{LM}
A.~Losev and Y.~Manin.
\newblock New moduli spaces of pointed curves and pencils of flat connections.
\newblock volume~48, pages 443--472. 2000.
\newblock Dedicated to William Fulton on the occasion of his 60th birthday.

\bibitem[NR23]{NR23}
Anastasia Nathanson and Dustin Ross.
\newblock Tropical fans and normal complexes.
\newblock {\em Adv. Math.}, 420:Paper No. 108981, 2023.

\bibitem[Pos09]{Postnikov}
Alexander Postnikov.
\newblock Permutohedra, associahedra, and beyond.
\newblock {\em Int. Math. Res. Not. IMRN}, (6):1026--1106, 2009.

\bibitem[Sch03]{Schrijver}
Alexander Schrijver.
\newblock {\em Combinatorial optimization. {P}olyhedra and efficiency. {V}ol.
  {B}}, volume~24 of {\em Algorithms and Combinatorics}.
\newblock Springer-Verlag, Berlin, 2003.
\newblock Matroids, trees, stable sets, Chapters 39--69.

\bibitem[Sha19]{Sharma}
Nand Sharma.
\newblock Psi-class intersections on {H}assett spaces for genus 0 with all
  weights {$\frac {1}{2}$}.
\newblock {\em Rocky Mountain J. Math.}, 49(7):2297--2324, 2019.

\bibitem[Wit91]{Witten}
Edward Witten.
\newblock Two-dimensional gravity and intersection theory on moduli space.
\newblock In {\em Surveys in differential geometry ({C}ambridge, {MA}, 1990)},
  pages 243--310. Lehigh Univ., Bethlehem, PA, 1991.

\end{thebibliography}

\end{document}